\newtheorem{theorem}{Theorem}[section]
\newtheorem{lemma}[theorem]{Lemma}
\theoremstyle{definition}
\newtheorem{definition}{Definition}[section]
\newtheorem{example}[theorem]{Example}
\theoremstyle{remark}
\def \mS{\mathbb{S}^{n-1}}
\def \mR{\mathbb{R}^n}
\numberwithin{equation}{section}
\begin{document}

\begin{center}
	{\Large \bf A  matroid polytope approach  to sharp
 affine isoperimetric inequalities \\ for volume decomposition functionals}
\end{center}
\vskip 15pt
\begin{center}
	{\bf Yude\ Liu\ \  \ \ \ Qiang\ Sun\ \  \ \ \ Ge\ Xiong}\\~~ \\
	School of Mathematical Sciences, Key Laboratory of Intelligent Computing
and Applications\\(Ministry of Education), Tongji University, Shanghai, 200092, P.R. China
\end{center}

\vskip 5pt
\footnotetext{E-mail addresses: 1. 1910743@tongji.edu.cn; \ 2. 1910744@tongji.edu.cn;\ 3. xiongge@tongji.edu.cn}
\footnotetext{Research of the authors was supported by NSFC No. 12271407.}

\begin{center}
	\begin{minipage}{14cm}
		{\bf Abstract}
		New sharp affine isoperimetric inequalities for volume decomposition functionals $X_{2}$ and $X_{3}$ in $\mathbb{R}^n$ are established. To fulfil this task, we prove the recursion formulas for volume decomposition functionals and find out the connection between the domains of these functionals and matroid polytopes. Applications of matroid theory to convex geometry are presented.
		
		\vskip 5pt{{\bf 2020 Mathematics Subject Classification:} 52B40, 52A40, 52B60.}

		\vskip 5pt{{\bf Keywords:} Volume decomposition functional; matroid polytope;  affine isoperimetric inequality}
	\end{minipage}
\end{center}


\vskip 15pt
\section{\bf Introduction}
\vskip 10pt

The setting for this paper is the $n$-dimensional Euclidean space, $\mathbb{R}^n$.
A \emph{convex body} is a compact convex set that has a nonempty interior.
Denote by $\mathcal{K}_o^n$ the set of convex bodies in $\mathbb{R}^n$ with the origin  $o$ in their interiors.
A \emph{polytope} in $\mathbb{R}^n$ is the convex hull of a finite set of points
in $\mathbb{R}^n$. A \emph{face} of a polytope $P$  is a convex subset $F\subseteq P$ such that $x,y\in P$ and $\frac{x+y}{2}\in F$ implies $x,y\in F.$
A face of dimension $\dim P-1$ is called a \emph{facet.}
Write $\mathcal{P}_o^n$ for the set of polytopes in $\mathbb{R}^{n}$ with the origin in their interiors. For $P\in\mathcal{P}_o^n$, write $V_n(P)$ for its $n$-dimensional volume.

Suppose $P\in \mathcal{P}_o^n$ and $u$  is a unit outer normal  vector to a facet $F$ of $P$. The
\emph{cone-volume} $V_P(\{u\})$ of $P$ associated with $u$ is the volume of the convex hull of  origin  $o$
and facet $F$. The simplest form of cone-volume is reduced to the  area formula of triangles in ancient geometry.

By using cone-volume and the idea of classification, Liu-Sun-Xiong \cite{lsx} introduced the so-called \emph{volume decomposition functional} of polytopes.

\begin{definition}\label{conefunctional} Suppose $P\in \mathcal{P}_o^n$ and the  unit outer normal  vectors of $P$
	are  $u_1,u_2,\dots,u_N$.  The \emph{$k$th volume decomposition functional} $X_k(P),$ $k=1,2,\ldots,n-1,n,$ is defined by
	\[X_k(P)^{n}= \sum_{\dim (\mathrm{span}\{u_{i_1},\dots,u_{i_n}\})=k}V_P(\{u_{i_1}\})V_P(\{u_{i_2}\}) \cdots V_P(\{u_{i_n}\}).\]
\end{definition}

Here $\mathrm{span}\{u_{i_1},\dots,u_{i_n}\}$  denotes the linear subspace
spanned by  normal vectors $u_{i_1},\dots,u_{i_n}$.
Obviously,  $X_k(P)^n$ is a homogeneous polynomial with degree $n$, $k=1,2,\ldots,n$;
$X_k(P)$ is centro-affine invariant, i.e., $X_k(TP)=X_k(P)$ for $T\in \rm{SL}(n)$;
and $X_k(\lambda P)=\lambda^nX_k(P)$ for $\lambda>0$.

It is  interesting that volume decomposition functionals satisfy the  regular identity
\begin{equation}\label{identity}
	V_n(P)^n=X_1(P)^n+X_2(P)^n+\cdots+X_n(P)^n;
\end{equation}
and  the $n$th volume decomposition functional
$$X_n(P)=\big(\sum_{u_{i_1}\wedge \dots\wedge u_{i_n}\neq 0}V_P(\{u_{i_1}\})V_P(\{u_{i_2}\})\cdots V_P(\{u_{i_n}\})\big)^{\frac{1}{n}}$$
is precisely  the functional $U$ introduced by  E. Lutwak, D. Yang and G. Zhang (LYZ) \cite{LYZ3} to attack
the longstanding Schneider projection problem. See \cite[Theorem 3.1]{lsx} for the proof of identity (\ref{identity}).

In 2001,   LYZ \cite{LYZ3} conjectured that if $P$ is a polytope in  $\mathbb{R}^{n}$  with its centroid at the origin, then
\begin{align}\label{e:1.6}
	\frac{X_n(P)}{V_{n}(P)}\geqslant\frac{(n!)^{\frac{1}{n}}}{n}
\end{align}
with equality if and only if $P$ is a parallelotope.
It took more than one dozen years to completely settle this conjecture. Refer to \cite{HLL},   \cite{Xiong} and \cite{Henk} for its developments.
In 2016, B$\rm {\ddot{o}}$r$\rm {\ddot{o}}$czky and Henk \cite{BH}  proved that LYZ's conjecture is also affirmative for convex bodies.

In solving the LYZ conjecture, a  ``\emph{concentration phenomenon}"
of cone-volumes was discovered: If $P$ is a  polytope in $\mathbb{R}^n$ with its centroid at the origin and the unit
outer normals of $P$ are  $u_1,u_2,\dots,u_N$, then
\begin{equation}\label{cone volume of polytopes}
	\sum_{u_{i}\in \xi}	V_{P}(\{u_{i}\})\leq \frac{\rm{dim}\xi}{n}V_{n}(P),\quad\quad  \text{for each subspace}\  \xi\subseteq \mathbb{R}^n,
\end{equation}
with equality  for a subspace  $\xi$ if and only if there exists a subspace $\xi^{'}$ complementary to $\xi$ in $\mathbb{R}^n$,
so that $\{u_{j}:u_{j} \not\in \xi\}\subseteq \xi^{'}$.  In 2013,  B{\"o}r{\"o}czky and
LYZ \cite{Jams} originally posed the subspace concentration condition (See Section \ref{sec2} for details), and  proved that it is this condition that guarantees the existence of solutions to the even logarithmic Minkowski problem.
For more applications,  see, e.g.,  \cite{BH1, BHP, BLYZ,HNS,LSX, SunXiong}.

\vskip3pt
In light of the identity (\ref{identity}) and  LYZ's conjecture,   Liu-Sun-Xiong \cite{lsx} raised the following  problem.

\noindent$\textbf{Problem X.}$  Let  $P\in \mathcal{P}_o^n$   with its centroid at the origin.
Does there exist a constant $c(n,k)$  depending on  $n$ and  $k, k\in \{1,2,\dots,n-1\},$ such that
$$\frac{X_k(P)}{V_{n}(P)}\leq c(n,k)?$$

The authors \cite{lsx} solved this problem in $\mathbb{R}^3$ and established the sharp affine isoperimetric inequalities
\[\frac{X_1(P)}{V_3(P)}\le\Big(\frac{1}{3}\Big)^\frac{2}{3},\ \ \frac{X_2(P)}{V_3(P)}\le\Big(\frac{2}{3}\Big)^\frac{1}{3},\ \ \text{and}\ \ \ \frac{X_3(P)}{V_3(P)}\ge \frac{2^{\frac{1}{3}}}{3^{\frac{2}{3}}},\]
and  equality holds in each inequality if and only if $P$ is a parallelepiped.

In this article, we further attack Problem X and obtain the following results.
\begin{theorem}\label{thm1.1}
	Let $P\in \mathcal{P}_o^n$ with its centroid at the origin and $n\geq 3$. Then
	$$\frac{X_2(P)}{V_n(P)}\leq\sqrt[n]{\binom{n}{2}\big((\frac{2}{n})^n-\frac{2}{n^n}\big)}$$
	with equality  if and only if $P$ is a parallelotope.
\end{theorem}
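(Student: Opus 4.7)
The plan is to exploit the subspace concentration condition (SCC), inequality (\ref{cone volume of polytopes}), applied to both 1-dimensional and 2-dimensional subspaces, combined with a careful grouping of the summands defining $X_2(P)^n$ according to the 2-dimensional span of each tuple.

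First, I would normalize so that $V_n(P)=1$, set $c_i := V_P(\{u_i\})$, and for each 1-dimensional subspace $\ell$ write $T_\ell := \sum_{u_i \in \ell} c_i$, and for each 2-dimensional subspace $\xi$ write $s_\xi := \sum_{u_i \in \xi} c_i = \sum_{\ell \subset \xi} T_\ell$. Since an $n$-tuple of normals spans a $2$-dimensional subspace $\xi$ exactly when all its entries lie in $\xi$ but not all in a common line of $\xi$, I would rewrite
\[
X_2(P)^n \;=\; \sum_{\xi} \Big(s_\xi^{\,n} - \sum_{\ell \subset \xi} T_\ell^{\,n}\Big),
\]
the outer sum running over 2-dimensional subspaces $\xi$ containing at least two non-collinear normals. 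The SCC then supplies the box constraints $T_\ell \leq 1/n$ for every $\ell$ and $s_\xi \leq 2/n$ for every $\xi$.

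The reduction turns Theorem \ref{thm1.1} into a constrained maximization: maximize $\sum_\xi (s_\xi^n - \sum_{\ell \subset \xi} T_\ell^n)$ over configurations $(T_\ell)$ with $\sum_\ell T_\ell = 1$ and the SCC constraints, respecting the underlying linear matroid on the normals. This is where the matroid polytope perspective enters: the admissible cone-volume vector lies in the $1/n$-scaled base polytope of the linear matroid determined by $u_1,\dots,u_N$, and I would use the recursion formulas for $X_k$ from \cite{lsx} together with matroid decomposition (into connected components, equivalently direct-sum decompositions $\mathbb{R}^n = \xi_1 \oplus \cdots \oplus \xi_s$ respecting the normals) to induct on the number of summands. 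The base case---an irreducible matroid---would be handled by a convexity/extremality argument at the vertices of the matroid polytope, where each vertex corresponds to a basis of normals with prescribed uniform weights.

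The main obstacle will be this combinatorial step: different $\xi$'s share lines, so the $T_\ell$'s couple across the outer sum and the naive bound ``$s_\xi \leq 2/n$ on each summand'' ignores the global constraint $\sum_\ell T_\ell = 1$. I expect matroid polytope theory to deliver the right extremality principle, forcing the maximum to occur at a configuration with $n$ antipodal pairs $\{\pm v_i\}$ of normals and equal weights $1/(2n)$, which is exactly the cone-volume vector of a parallelotope with centroid at the origin. The equality analysis then follows by tracking when the SCC and the combinatorial inequality are simultaneously sharp: exactly $\binom{n}{2}$ essential 2-dimensional subspaces can appear, each containing precisely two lines with equal weight, which forces a basis of $n$ lines and, together with the centroid condition, pins down $P$ as a parallelotope by the same mechanism used in \cite{lsx}.
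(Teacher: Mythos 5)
You have correctly identified the overall architecture that the paper uses: normalize, express $X_2(P)^n$ as a polynomial in the line cone-volumes via the recursion formulas (your rewrite $X_2(P)^n = \sum_\xi\big(s_\xi^n - \sum_{\ell\subset\xi}T_\ell^n\big)$ is exactly Example~\ref{X_2} in the paper), recognize that the admissible cone-volume vector lives in the matroid polytope $P_{M(u_1,\dots,u_N)}$ (Theorem~\ref{rel=cal}), and then show the polynomial is maximized at the vertices, which by Theorem~\ref{vertxedge} are indicator vectors of bases and correspond to parallelotope configurations. The equality analysis you sketch then matches the paper's Step~3: if $P$ is not a parallelotope then $N>n$, so $a$ lies in the (nontrivial) relative interior and hence is not a vertex.

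However, there is a genuine gap at precisely the point you flag as the ``main obstacle.'' You write that you ``expect matroid polytope theory to deliver the right extremality principle,'' but no such off-the-shelf principle exists: the polynomial $f$ is not convex on the hypersimplex, and the constraints $T_\ell\le 1/n$, $s_\xi\le 2/n$ from the subspace concentration condition are not, by themselves, what forces extremality at vertices. The paper's actual engine is a concrete second-order computation: at any non-vertex point $x$ there is (by the Gelfand--Goresky--MacPherson--Serganova edge description) a direction $e_{j_1}-e_{j_2}$ along which $x$ can move inside $P_M$, and one computes
\[
\frac{\partial^2 f}{\partial x_{j_1}^2}-\frac{\partial^2 f}{\partial x_{j_1}\partial x_{j_2}}
=n(n-1)\Big(-x_{j_1}^{\,n-2}+\!\!\sum_{\{i:u_{j_1}\in\xi_i^2,\ u_{j_2}\notin\xi_i^2\}}\!\!\big((\textstyle\sum_{u_j\in\xi_i^2}x_j)^{n-2}-x_{j_1}^{\,n-2}\big)\Big)
\]
and then shows this is strictly positive using $\sum_i x_i = n$, $0<x_{j_1}<1$, the bound $\sum_{u_j\in\xi}x_j\le 2$, and $n\ge 3$. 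That pointwise positivity of a mixed second derivative along edge directions is the heart of the proof and has to be established by hand; it does not follow from matroid theory abstractly. Your proposed alternative---an induction on connected components of the matroid via direct-sum decompositions---is not used in the paper and its feasibility is unclear, since $X_2$ does not decompose additively (or multiplicatively) over $\mathbb{R}^n=\xi_1\oplus\cdots\oplus\xi_s$ in a way that would propagate the inequality; the cross terms $s_\xi^n$ with $\xi$ meeting several summands do not separate. Without the second-derivative estimate or an equivalent quantitative extremality argument, the proof is incomplete.
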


\begin{theorem}\label{thm1.2}
	Let $P\in \mathcal{P}_o^n$ with its centroid at the origin and $n\geq5$. Then
	$$\frac{X_3(P)}{V_n(P)}\leq\sqrt[n]{\binom{n}{3}\big((\frac{3}{n})^n-3(\frac{2}{n})^n+\frac{3}{n^n}\big)}$$
	with equality if and only if $P$ is a parallelotope.
\end{theorem}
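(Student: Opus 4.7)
The plan is to extend the blueprint of Theorem \ref{thm1.1} and the three-dimensional case in \cite{lsx} to rank-$3$ flats of the matroid of the outer unit normals of $P$.

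\textbf{Step 1 (recursion formula).} First I would establish a recursion formula for $X_3(P)^n$ via Möbius inversion on the lattice of flats of the normals' matroid. Writing $a_\xi = \sum_{u_i \in \xi}V_P(\{u_i\})$ and $\mathcal{L}_k(P)$ for its $k$-dimensional flats, the identity $\sum_{\mathrm{span}\subseteq \xi}V_P(\{u_{i_1}\})\cdots V_P(\{u_{i_n}\}) = a_\xi^n$ combined with Möbius inversion on $[\{0\},\xi]$ yields
\begin{equation*}
X_3(P)^n = \sum_{\xi\in\mathcal{L}_3(P)}\Bigl(a_\xi^n - \sum_{\eta\in\mathcal{L}_2(P),\,\eta\subset\xi}a_\eta^n + \sum_{L\in\mathcal{L}_1(P),\,L\subset\xi}\bigl(e_\xi(L)-1\bigr)\,a_L^n\Bigr),
\end{equation*}
where $e_\xi(L)$ is the number of $2$-flats of $\xi$ that contain the line $L$. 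This is the rank-$3$ analogue of the recursion that underlies Theorem \ref{thm1.1}.

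\textbf{Step 2 (matroid polytope framework).} Next I would combine the subspace concentration inequality (\ref{cone volume of polytopes}) with the matroid polytope framework. The inequality gives $a_\xi \le (\dim\xi/n)V_n(P)$, and normalizing cone-volumes shows that the vector $\bigl(n\,V_P(\{u_i\})/V_n(P)\bigr)_{i}$ lies in the base polytope of the normals' matroid of $P$; this is the ``connection between the domains of these functionals and matroid polytopes'' advertised in the abstract. The target bound $\binom{n}{3}(3^n - 3\cdot 2^n + 3)V_n(P)^n/n^n$ is precisely what the recursion of Step 1 produces when the matroid is the Boolean (parallel-pair) matroid on $2n$ elements with every cone-volume equal to $V_n(P)/(2n)$ and every concentration inequality saturated, i.e.\ when $P$ is a parallelotope.

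\textbf{Step 3 (global estimate, obstacle and equality).} The main task is to show
\begin{equation*}
\sum_{\xi\in\mathcal{L}_3(P)} g(\xi) \;\le\; \binom{n}{3}\bigl(3^n - 3\cdot 2^n + 3\bigr)\,\bigl(V_n(P)/n\bigr)^n,
\end{equation*}
where $g(\xi)$ denotes the summand from Step 1. The hard part is that a term-by-term bound fails, because $|\mathcal{L}_3(P)|$ can exceed $\binom{n}{3}$; one instead needs an amortized estimate on the matroid base polytope that trades off a larger number of $3$-flats against smaller $a_\xi$-values, using the incidence/flag inequalities between flats of ranks $1,2,3$ furnished by matroid polytope theory. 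The hypothesis $n\ge 5$ appears to enter exactly here: for $n=4$ the rank-$3$ portion of the matroid lattice is too shallow to exclude competing extremal configurations. Finally, equality forces simultaneous saturation of all of the concentration inequalities used above, which by the rigidity statement of (\ref{cone volume of polytopes}) forces the normals' matroid to be Boolean and each pair of antipodal facets to carry equal cone-volume; combined with the centroid-at-origin condition, this forces $P$ to be a parallelotope, as in Theorem \ref{thm1.1} and \cite{lsx}.
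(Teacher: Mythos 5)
Your Steps~1 and~2 match the paper's machinery closely.  The recursion formula you derive via M\"obius inversion is equivalent to the paper's Example~\ref{X_3}: expanding the paper's inner bracket gives coefficient $e_{\xi_i^3}(l_{u_j})-1$ on each $V_P(\{\pm u_j\})^n$, which is exactly your $\bigl(e_\xi(L)-1\bigr)$ term.  And your observation that the normalized cone-volume vector lives in the matroid (base) polytope $P_{M(u_1,\dots,u_N)}$ is the content of Theorems~\ref{relint} and~\ref{rel=cal}; the paper is slightly sharper, showing that the vector lands in the \emph{relative interior}, which is what ultimately enforces strictness when $P$ is not a parallelotope.

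The critical gap is in Step~3, and it is not a small one.  You say the main task is ``an amortized estimate on the matroid base polytope that trades off a larger number of $3$-flats against smaller $a_\xi$-values, using the incidence/flag inequalities between flats of ranks $1,2,3$.''  That is a plausible-sounding direction, but it is not an argument, and it is not what the paper does.  The paper's decisive move is a \emph{variational/convexity} argument: it treats $X_3(P)^n$ as a polynomial $g$ on $P_{M(u_1,\dots,u_N)}$ and shows, by computing $\frac{\partial^2 g}{\partial x_{j_1}^2} - \frac{\partial^2 g}{\partial x_{j_1}\partial x_{j_2}}$ and proving it is strictly positive on the polytope (via a delicate chain of estimates: the $y_l$ change of variables, the auxiliary functions $h_\alpha$, Taylor expansion with mean-value remainder, and the bounds $y_l\le 2$, $x_1\le 1$), that $g$ is strictly convex along every edge direction $e_i-e_j$ of the matroid polytope.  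By Theorem~\ref{vertxedge} those are the only edge directions, so $g$ is maximized only at vertices.  Evaluating $g$ at a vertex $(\underbrace{1,\dots,1}_n,0,\dots,0)$ yields $\binom{n}{3}(3^n-3\cdot2^n+3)$, and since $a\in\mathrm{relint}\,P_M$, the inequality is strict unless $N=n$, i.e.\ $P$ is a parallelotope.  That second-derivative computation is where the hypothesis $n\ge 5$ genuinely enters (one gets factors like $n\alpha-3\alpha-2$ and $3n-14$ that need to be positive), not through ``shallowness of the matroid lattice.''  Your equality analysis via ``simultaneous saturation of all concentration inequalities'' is also off target: in the paper equality is settled purely by the fact that $\mathrm{relint}\,P_M$ meets the vertex set only when $P_M$ is a point, which happens exactly when $N=n$.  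Without the convexity-along-edges lemma, your proposal states the goal but does not prove the theorem.
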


Restricted to $\mathcal{P}_4^4$, i.e., the set of polytopes in $\mathbb{R}^4$  whose any \emph{four} outer  normal vectors (up to their antipodal normal vectors) are linear independent, we prove the following.

\begin{theorem}\label{thm1.3}
	Let $P\in \mathcal{P}_4^4$ with its centroid at the origin. Then
	\[ \frac{X_3(P)}{V_4(P)}\leq\sqrt[4]{\frac{72}{125}}\]
	with equality if and only if $\mathrm{supp}S_P\cup\mathrm{supp}S_{-P}=\{\pm u_1,\dots,\pm u_5\}$,
	and $V_P(\{\pm u_i\})=\frac{V_4(P)}{5},$ $i=1,\dots,5.$
\end{theorem}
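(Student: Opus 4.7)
The plan is to reduce Theorem \ref{thm1.3} to an algebraic optimization over cone-volume profiles and solve it by Lagrange multipliers together with a Vieta-type obstruction.

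First I would compute $X_3(P)^4$ in closed form, exploiting the $\mathcal{P}_4^4$ hypothesis. Group the outer unit normals of $P$ by direction modulo antipode, and for each direction $i \in \{1, \dots, m\}$ let $c_i := V_P(\{u_i\}) + V_P(\{-u_i\})$. Since any four non-antipodal normals are linearly independent, an ordered 4-tuple of normals spans a 3-dimensional subspace iff it uses exactly three distinct directions, i.e., exactly one direction is \emph{doubled}---either by repeating the same vector, or by containing an antipodal pair. Counting the $\binom{4}{2}\cdot 2 = 12$ position-patterns for the doubled direction, and using $(a_i + b_i)^2 = a_i^2 + 2 a_i b_i + b_i^2$ to collapse the "same-vector" and "antipodal-pair" sub-cases into $c_i^2$, produces the clean formula
\[
X_3(P)^4 \;=\; 12 \sum_{1 \le i < j < k \le m} c_i c_j c_k\,(c_i + c_j + c_k).
\]
Normalize by $V := V_4(P)$ and set $t_i := c_i/V$, so $\sum t_i = 1$. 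The subspace concentration inequality (\ref{cone volume of polytopes}) applied to $\mathrm{span}\{u_i\}$ yields $t_i \le 1/4$; its equality analysis combined with the $\mathcal{P}_4^4$ condition forces $m \le 4$ when $t_i = 1/4$, so strict inequality $t_i < 1/4$ holds whenever $m \ge 5$. Using the identity $(\sum_l t_l)\,e_3(t) = T + 4\,e_4(t)$ with $T := \sum_{i<j<k} t_i t_j t_k(t_i + t_j + t_k)$, we reach
\[
\frac{X_3(P)^4}{V_4(P)^4} \;=\; 12\,T \;=\; 12\bigl(e_3(t) - 4\,e_4(t)\bigr),
\]
reducing the theorem to: maximize $T$ over $\{\sum t_i = 1,\ t_i \in [0,1/4]\}$, with conjectured maximum $6/125$ attained only at $m=5$, $t_i = 1/5$.

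For the optimization, the key observation is a Lagrange--Vieta obstruction. Expanding $e_k(t \setminus l)$ in terms of $e_k(t)$ and $t_l$, the stationarity condition $\partial T/\partial t_l = e_2(t \setminus l) - 4\,e_3(t \setminus l) = \lambda$ reduces to
\[
4 t_l^3 - 3 t_l^2 + (4 e_2 - 1)\,t_l \;=\; C
\]
for all interior $l$, so the interior $t_l$ take at most three distinct values, namely the roots of this cubic. By Vieta's formulae these three roots sum to $3/4$; therefore three distinct values all lying in $(0,1/4)$ is impossible, since their sum would be strictly less than $3/4$. Hence an interior critical point has either all $t_l$ equal or exactly two distinct values (with a third "phantom" root possibly outside $(0,1/4)$). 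In the all-equal case $t_l = 1/m$, one computes $T(m) = (m-1)(m-2)/(2m^3)$, maximized over integers $m \ge 4$ precisely at $m = 5$ with value $6/125$. For two-value critical points with multiplicities $(r,s)$, combining $r\alpha + s\beta = 1$ with the Lagrange-derived relation $4 e_2 = 1 + 3(\alpha+\beta) - 4(\alpha^2 + \alpha\beta + \beta^2)$ and the explicit expression $e_2 = \binom{r}{2}\alpha^2 + rs\,\alpha\beta + \binom{s}{2}\beta^2$ reduces to a quadratic in $\beta$; case-by-case analysis shows that for $m = 5$ every $(r,s)$ forces $\alpha = \beta$ (so no genuine two-value critical point exists), while for $m \ge 6$ the genuine solutions yield $T < 6/125$ strictly. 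Boundary configurations with some $t_i = 1/4$ collapse to $m = 4$ and give $T = 3/64 < 6/125$.

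The main obstacle is the two-value case analysis for $m \ge 6$: though each $(r,s)$-system is algebraically elementary, uniformly bounding $T$ across all $(r,s)$ by $6/125$ requires either an explicit computation exploiting the $(r,s) \leftrightarrow (s,r)$ symmetry, or the extraction of a cleaner inequality (an SOS-style identity relating $6/125 - T$ to the non-negative slacks $t_i(1 - 4t_i)$ and a squared excess around $t_i = 1/5$ would be the ideal form). Once the optimization is complete, equality $T = 6/125$ forces $m = 5$ and $t_i = 1/5$, which translates precisely into $\mathrm{supp}\,S_P \cup \mathrm{supp}\,S_{-P} = \{\pm u_1, \dots, \pm u_5\}$ together with $V_P(\{\pm u_i\}) = V_4(P)/5$.
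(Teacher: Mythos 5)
Your reformulation is sound and genuinely different from the paper's route. The closed form $X_3(P)^4 = 12\sum_{i<j<k}c_ic_jc_k(c_i+c_j+c_k)$ and the reduction, via $e_1(t)e_3(t) = T + 4e_4(t)$ with $e_1(t)=1$, to maximizing $T = e_3(t)-4e_4(t)$ over $\{\sum t_i = 1,\ t_i\in[0,1/4]\}$ are both correct, and match (after the substitution $a_i = 4t_i$) the polynomial $f$ that the paper optimizes on the hypersimplex $\bigtriangleup_N^4$. The Lagrange-plus-Vieta observation --- that at an interior critical point all free $t_l$ are roots of the cubic $4x^3 - 3x^2 + (4e_2-1)x = C$, whose roots sum to $3/4$, so at most two of them can lie in $(0,1/4)$ --- is an elegant structural reduction that the paper does not use. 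The paper's Lemmas 6.1--6.2 instead run a directional-derivative argument for $N=5$ and a second-derivative convexity argument for $N\ge 6$, handling the upper bounds $x_i\le 1$ implicitly.

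However, as written the proposal does not constitute a proof, for two concrete reasons. First, you acknowledge that the two-value case for $m\ge 6$ is ``the main obstacle'' and leave it open; and it is not vacuous --- for example at $m=7$ with multiplicities $(r,s)=(5,2)$ the system $2\alpha+5\beta$ swapped, i.e.\ $5\alpha+2\beta=1$, together with the Lagrange relation admits the genuine interior critical point $(\alpha,\beta)\approx(0.103,\,0.242)$ with $T\approx 0.044<6/125$, so one really does have to bound $T$ uniformly over infinitely many $(m,r,s)$ triples, which you have set up but not completed. Second, the boundary dismissal is wrong: configurations with some $t_i=1/4$ do \emph{not} collapse to $m=4$. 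The point $t=(1/4,\,3/16,\,3/16,\,3/16,\,3/16)$ is a boundary point with $m=5$, and a direct computation gives $T = 3132/65536 \approx 0.0478$, which exceeds $3/64 = 0.0469$ (and is close to, but strictly below, $6/125 = 0.048$). For such a boundary point the KKT cubic constraint only binds the free coordinates $t_2,\dots,t_m$, while $t_1=1/4$ is fixed; your Vieta argument still shows the free coordinates take at most two values in $(0,1/4)$, but this generates a further infinite family of candidate critical configurations that must be analyzed separately and is not covered by ``$m=4$, $T=3/64$''. Until those two case analyses are carried out (the paper's second-derivative approach sidesteps both cleanly), the argument is a plausible strategy rather than a proof.
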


It is known that if $P$ is a polygon in $\mathbb{R}^2$ with its centroid at the origin, then $\frac{X_2(P)}{V_2(P)}\geq \frac{\sqrt{2}}{2}$
with equality if and only if $P$ is a parallelogram. See \cite{Xiong} or \cite{LSX} for its proof. Combining  Theorem 1.2 in \cite{lsx} and Theorem \ref{thm1.1} together with the above fact, Problem X for $X_{1}$ and $X_{2}$ are fully solved. Now, with Theorem \ref{thm1.2} in hand, Problem X for $X_{3}$ is solved,  except the \emph{only} case in $\mathbb{R}^4$.

It is worth mentioning that Problem X for $X_3$ in $\mathbb{R}^4$ changes \emph{drastically}.
On one hand, \emph{parallelotopes} are in the set $\mathcal{P}_4^4$, but parallelotopes don't satisfy the equality conditions in Theorem \ref{thm1.3}; Meanwhile,
\emph{simplices} with their centroids at the origin attain the equality in Theorem \ref{thm1.3}.
However,  we illustrate that the functional $X_{3}$ does \emph{not} attain its maximum at simplices in the set $\mathcal{P}_o^4$. See
Example \ref{exm3.5} for details. So, it is a challenge to find the extremal body for $X_3$ in $\mathbb{R}^4$.

This article is organized as follows.

After making some necessary preliminaries in Section \ref{sec2}, we establish the  recursion formulas for volume decomposition functionals in Section \ref{sec3}. These recursion formulas are of the ``dimension reduction" function such that we can represent volume decomposition functionals as explicit polynomials.

In subsequent, we have to figure out the ``effective domains" of these polynomials.
Capturing the essential attribute of cone-volumes, we prove that these domains are indeed \emph{relative interiors} of a class of matroid polytopes, which are important in matroid theory.
Applications of matroid theory to convex geometry are presented in Section \ref{sec4}.

In Section \ref{sec5}, we prove Theorems \ref{thm1.1} and \ref{thm1.2}.
The proof of Theorem \ref{thm1.3} is provided in Section \ref{sec6}.

\vskip 15pt
\section{\bf Preliminaries}\label{sec2}
\vskip 5pt

Write $x\cdot y$ for the standard inner product of $x,y$ in $\mathbb{R}^n$. For $u\in \mathbb{R}^{n}\backslash\{o\}$ and $\alpha \in \mathbb{R}$,
\[H_{u,\alpha}=\{x\in \mathbb{R}^{n}:x\cdot u=\alpha\}\]
denotes
a \emph{hyperplane} in $\mathbb{R}^{n}$,
which bounds a \emph{closed halfspace} $H_{u,\alpha}^{-}=\{x\in \mathbb{R}^{n}:x\cdot u \leq \alpha\}$.

For $u\in\mathbb{S}^{n-1}=\{x\in \mathbb{R}^{n}:|x|=1\}$, let $l_u$ be the 1-dimensional subspace spanned by $u$.
Write $\mathrm{G}(n)$ for the set of subspaces of $\mathbb{R}^n$, and $\mathrm{G}_{n,k}$ for the set of $k$-dimensional subspaces of $\mathbb{R}^{n}$.

For $\mu\in\mathcal{B}(\mS)$, the set of finite Borel measures on $\mS$, let $\mathrm{supp}\mu$ denote its \emph{support set}.

Let $\mathcal{P}_c^n$ and $\mathcal{P}_{os}^n$ be the class of polytopes in $\mathbb{R}^{n}$ with centroid at the origin and
the class of  origin-symmetric  polytopes in $\mathbb{R}^{n}$, respectively.

For a polytope $P$ in $\mR$, write $\mathcal{F}_0(P)$ for the set of its vertices (i.e., $0$-dimensional faces), and $\mathcal{F}_1(P)$ for the set of  its edges (i.e., $1$-dimensional faces); write $\mathrm{relint}P$ for its relative interior and $\mathrm{relbd}P$ for its relative boundary, respectively.

Let $Z$ be a finite set of unit vectors in $\mathbb{R}^n$, and $Z\cup (-Z)=\{\pm u_1,\pm u_2,\dots,\pm u_N\}$.
$Z$ is said to be in \textit{$k$-general position},  $k\in \{1,2,\dots,n\},$ if $Z$ is not contained in a closed hemisphere of $\mathbb{S}^{n-1}$ and any $k$ elements of $\{u_1,u_2,\dots,u_N\}$ are linearly independent.

A polytope $P$ in $\mathbb{R}^n$ is said to be in  \emph{$k$-general position}, if the set of  unit outer normals of $P$ is in $k$-general position.
Write $\mathcal{P}_k^n$ for the set of polytopes in $\mathbb{R}^n$  which are in $k$-general position and contain the origin in their interiors.  See \cite[p. 12]{LSX} for more details about the definition. In \cite[pp. 26-28]{lsx}, we also proved that $\mathcal{P}_k^n$ is dense in $\mathcal{K}_o^n$ in the sense of Hausdorff metric $\delta$, for $k=1,2,\dots,n,$.

K\'{a}rolyi and Lov\'{a}sz \cite{KL} first posed the notion of \textit{general position}. In fact, the $n$-general position,
up to antipodal unit outer normals, is indeed the general position in the sense of K\'{a}rolyi and Lov\'{a}sz.

Let $K$ be a convex body in $\mR$.
If $K=K_1+\cdots+ K_l$ for suitable $K_i$ lying in subspaces $\xi_i$ of $\mR$ so that
$\mR=\xi_1\oplus\cdots\oplus \xi_l$, write $K=K_1\oplus\cdots\oplus K_l$. If $K = L \oplus M$ is only
possible with $\dim L = 0$ or $\dim M = 0$, then $K$ is called \emph{directly indecomposable}. If $K = L \oplus M$ with $\dim L>0$ and $\dim M>0$, then  $K$ is called an $(L,M)$-\emph{cylinder}.  Please refer to \cite[p. 156]{Schneider2} for details.

The following lemmas are needed in this article. See \cite[p. 11]{lsx} for details.

\begin{lemma}\label{thm5.1}
	Let $P$ and $Q$ be polytopes in $\mathbb{R}^n$ with the origin in their interiors. If  $V_P(\{\pm u\})=V_Q(\{\pm u\})$ for any $u\in \mathbb{S}^{n-1}$,
	then $X_k(P)=X_k(Q),$ $k=1,2,\dots,n.$
\end{lemma}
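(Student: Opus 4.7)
The plan is to regroup the sum defining $X_k(P)^n$ according to the one-dimensional subspaces (lines through the origin) containing each outer normal, rather than according to the vectors themselves. The key observation is that for any ordered $n$-tuple $(u_{i_1},\dots,u_{i_n})$ of outer normals, the subspace $\mathrm{span}\{u_{i_1},\dots,u_{i_n}\}$, and hence the dimensional constraint $\dim(\mathrm{span}\{u_{i_1},\dots,u_{i_n}\})=k$, depends only on the lines $l_{u_{i_j}}$ and not on the choice of orientation for each vector. This decouples the combinatorial selection from the analytic weight, so the cone-volumes may be summed over sign choices at each position independently.

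Concretely, I would let $L_1,\dots,L_m$ enumerate the one-dimensional subspaces of $\mathbb{R}^n$ supporting an outer normal of $P$ or of $Q$, fix a unit vector $w_j\in L_j$ for each $j$, and adopt the convention $V_P(\{u\}):=0$ whenever $u$ is not an outer normal of $P$ (and similarly for $Q$). Set
\[
a_j := V_P(\{w_j\})+V_P(\{-w_j\}) \quad\text{and}\quad b_j := V_Q(\{w_j\})+V_Q(\{-w_j\}).
\]
The hypothesis $V_P(\{\pm u\})=V_Q(\{\pm u\})$ for every $u\in\mathbb{S}^{n-1}$ reads precisely as $a_j=b_j$ for every $j$. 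For each ordered tuple $(j_1,\dots,j_n)$ of line indices, summing $V_P(\{u_{i_1}\})\cdots V_P(\{u_{i_n}\})$ over all index choices with $u_{i_t}\in\{\pm w_{j_t}\}$ factorizes by position into $\prod_{t=1}^n a_{j_t}$. Consequently
\[
X_k(P)^n = \sum_{\dim(\mathrm{span}\{w_{j_1},\dots,w_{j_n}\})=k} a_{j_1}\cdots a_{j_n},
\]
and the analogous formula for $Q$ replaces each $a_{j_t}$ by $b_{j_t}$. Equality of the two expressions then follows immediately from $a_j=b_j$, since the range of summation depends only on the lines $L_j$.

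No substantive obstacle is anticipated; the argument is essentially a bookkeeping exercise. Its only subtlety is in choosing a single common system of lines $L_1,\dots,L_m$ that jointly covers the supports of both polytopes, so that the summation ranges for $X_k(P)^n$ and $X_k(Q)^n$ can be indexed by the same combinatorial object. This same line-indexing is the natural setup for the matroid-polytope perspective developed later in the paper, and so the lemma may be viewed as an early manifestation of that framework.
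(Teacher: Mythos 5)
Your argument is correct. The paper itself does not reprove this lemma but cites its companion paper \cite{lsx} for the details; your regrouping of the defining sum for $X_k(P)^n$ by lines $l_{u_{i_t}}$ rather than by signed normals, noting that the dimensional constraint $\dim\mathrm{span}\{u_{i_1},\dots,u_{i_n}\}=k$ is sign-invariant and then applying the distributive law to factor the inner sum into $\prod_t\bigl(V_P(\{w_{j_t}\})+V_P(\{-w_{j_t}\})\bigr)$, is exactly the natural and standard route and matches the spirit of the cited proof.
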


\begin{lemma}\label{injective}
	If $P$	is a polytope in $\mathbb{R}^n$ with its centroid at the origin, then there exists an origin-symmetric
	polytope $Q$ in $\mathbb{R}^n$  so that  $X_k(Q)=X_k(P),$ $k=1,2,\dots,n.$
\end{lemma}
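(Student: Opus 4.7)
The plan is to symmetrize the cone-volume measure of $P$ and then invoke an existence result for the even logarithmic Minkowski problem, finally appealing to Lemma \ref{thm5.1}. Concretely, I let $u_{1},\dots,u_{N}$ be the unit outer normals of $P$ and define the discrete, even measure
\[
\mu=\tfrac{1}{2}\bigl(V_{P}+V_{-P}\bigr)\quad\text{on }\mathbb{S}^{n-1},
\]
where $V_{-P}$ denotes the pushforward of $V_{P}$ under the antipodal map $u\mapsto -u$. In particular $\mu$ is supported in $\{\pm u_{1},\dots,\pm u_{N}\}$, its total mass is $V_{n}(P)$, and for every $u$ one has $\mu(\{u\})+\mu(\{-u\})=V_{P}(\{u\})+V_{P}(\{-u\})=V_{P}(\{\pm u\})$.

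Next I would check that $\mu$ satisfies the (even) subspace concentration condition. For any proper linear subspace $\xi\subset\mathbb{R}^n$, the fact that $\xi=-\xi$ gives
\[
\mu(\mathbb{S}^{n-1}\cap\xi)=\tfrac{1}{2}\bigl(V_{P}(\mathbb{S}^{n-1}\cap\xi)+V_{P}(\mathbb{S}^{n-1}\cap(-\xi))\bigr)=V_{P}(\mathbb{S}^{n-1}\cap\xi),
\]
which is bounded above by $\tfrac{\dim\xi}{n}V_{n}(P)$ by the concentration phenomenon (\ref{cone volume of polytopes}) for the centered polytope $P$. The equality-case analysis of (\ref{cone volume of polytopes}) transfers from $V_{P}$ to $\mu$ in the same way, yielding the subspace decomposition property whenever equality is achieved.

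Then I would apply the existence result of B\"or\"oczky--Lutwak--Yang--Zhang for the even logarithmic Minkowski problem: since $\mu$ is a discrete even measure on $\mathbb{S}^{n-1}$ whose support is not contained in any closed hemisphere (inherited from $P$) and which satisfies the (even) subspace concentration condition, there exists an origin-symmetric polytope $Q$ with $V_{Q}=\mu$. By origin-symmetry of $Q$, $V_{Q}(\{u\})=V_{Q}(\{-u\})$ for every $u$, so
\[
V_{Q}(\{\pm u\})=2V_{Q}(\{u\})=2\mu(\{u\})=V_{P}(\{u\})+V_{P}(\{-u\})=V_{P}(\{\pm u\}).
\]
Lemma \ref{thm5.1} then gives $X_{k}(Q)=X_{k}(P)$ for $k=1,\dots,n$, finishing the argument.

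The main technical hurdle is the appeal to the even logarithmic Minkowski theorem, which is a deep existence result; the rest of the argument consists of checking that the symmetrized measure $\mu$ inherits the required admissibility from the centering of $P$. An alternative, self-contained route that avoids the logarithmic Minkowski machinery would be to build $Q$ directly by solving for the support numbers $h_{Q}(\pm u_{i})$ as a minimizer of the log-entropy functional $\sum V_{Q}(\{\pm u_{i}\})\log h_{Q}(\pm u_{i})$ over origin-symmetric polytopes with prescribed $V_{Q}(\{\pm u_{i}\})=V_{P}(\{\pm u_{i}\})$; the existence of this minimizer again reduces to the subspace concentration inequality, but it makes the passage to a \emph{polytope} (rather than a general convex body) transparent.
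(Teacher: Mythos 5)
Your argument is correct and follows essentially the same route the paper itself uses (the identical symmetrization appears inside the proof of Theorem~\ref{rel=cal}): symmetrize $V_P$ to an even discrete measure, verify the subspace concentration condition via Henk--Linke (Lemma~\ref{Henk}), invoke the B\"or\"oczky--Lutwak--Yang--Zhang existence theorem (Lemma~\ref{jams}) to obtain an origin-symmetric polytope with the prescribed cone-volumes on each antipodal pair, and finish with Lemma~\ref{thm5.1}. You also correctly flag the one subtlety worth noting explicitly --- Lemma~\ref{jams} as quoted produces a convex body, and one must observe that a discrete even measure forces the solution to be a polytope, a standard fact the paper uses implicitly elsewhere (e.g.\ in Example~\ref{exm3.5}).
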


Cone-volume measure is a natural extension of cone-volume of polytopes to convex bodies. For $K\in \mathcal{K}_o^n$,
its \emph{cone-volume measure} $V_{K}$ is a finite Borel measure on $\mathbb{S}^{n-1}$, defined for each Borel $\omega \subseteq \mathbb{S}^{n-1}$ by
\begin{equation}\label{cvm}
	V_{K}(\omega)=\frac{1}{n}\int_{x\in \nu_{K}^{-1}(\omega)}x\cdot \nu_{K}(x)d\mathcal{H}^{n-1}(x),
\end{equation}
where $\nu_{K}:\partial^{\prime}K \rightarrow \mathbb{S}^{n-1}$ is the Gauss map of $K$, defined on $\partial^{\prime}K$, the set of points
of $\partial K $ that have a unique outer unit normal, and $\mathcal{H}^{n-1}$ is $(n-1)$-dimensional Hausdorff measure.

Cone-volume measure has appeared in \cite{BGMN,Gardner1,GM, PW,Schneider2,ZouXiong},
and been intensively investigated in recent years. See, e.g., \cite{BH,BH1,BHP,BLYZ,Henk,HenkP,HX,LLSX,Xiong}.

Following  B{\"o}r{\"o}czky and LYZ \cite{Jams}, we present the definition of subspace concentration condition and the
celebrated B{\"o}r{\"o}czky-LYZ existence theorem  on solutions to the even logarithmic Minkowski problem, which will be used repeatedly.

\begin{definition}\label{scc}
	A finite Borel measure $\mu$ on $\mathbb{S}^{n-1}$ is said to satisfy the \emph{subspace concentration inequality} if, for every subspace $\xi$ of $\mathbb{R}^n$, so that $0<\mathrm{dim}\xi<n$,
	\begin{equation}
		\label{sci}
		\mu(\xi\cap\mathbb{S}^{n-1})\le\frac{1}{n}\mu(\mathbb{S}^{n-1})\mathrm{dim}\xi.
	\end{equation}
	The measure is said to satisfy the \emph{subspace concentration condition} if in addition to satisfying the subspace concentration inequality (\ref{sci}), whenever
	\[\mu(\xi\cap\mathbb{S}^{n-1})=\frac{1}{n}\mu(\mathbb{S}^{n-1})\mathrm{dim}\xi,\]
	for some subspace $\xi$, then there exists a subspace $\xi'$, which is complementary to $\xi$ in $\mathbb{R}^n$, so that also
	\[\mu(\xi'\cap\mathbb{S}^{n-1})=\frac{1}{n}\mu(\mathbb{S}^{n-1})\mathrm{dim}\xi',\]
	or equivalently so that $\mu$ is concentrated on $\mathbb{S}^{n-1}\cap(\xi\cup\xi')$.
\end{definition}

\begin{lemma}
	\label{jams}
	\emph{(B{\"o}r{\"o}czky-Lutwak-Yang-Zhang, \cite{Jams})} A non-zero finite even Borel measure
	on the unit sphere $\mathbb{S}^{n-1}$ is the cone-volume measure of an origin-symmetric convex body in $\mathbb{R}^n$
	if and only if it satisfies the subspace concentration condition.
\end{lemma}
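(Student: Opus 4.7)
The plan is to prove both directions separately, with the sufficiency (existence) direction via a logarithmic Minkowski variational problem and the necessity via a direct geometric argument. Throughout I will work with origin-symmetric bodies, and the overall strategy mirrors the structure of classical Minkowski-type existence theorems but with the crucial twist that the objective functional $\int_{\mathbb{S}^{n-1}}\log h_K\,d\mu$ replaces the usual linear $\int h_K\,d\mu$, so that the minimizer's cone-volume measure (rather than surface area measure) matches the prescribed data.

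For the necessity direction, given an origin-symmetric $K\in\mathcal{K}_o^n$ and a subspace $\xi$ with $0<\dim\xi<n$, I would partition $\partial'K$ into the portion with outer normal in $\xi\cap\mathbb{S}^{n-1}$ and its complement, and bound the integral (\ref{cvm}) over the first portion. The key geometric observation is that the cone over a face with normal $u\in\xi$ projects into $\xi$ in a volume-comparable way; combining this with the origin-symmetry of $K$ (which forces $K$ to contain the antipodal copy of any chord through $o$) and a volume-splitting argument essentially due to Böröczky--LYZ yields the subspace concentration inequality. The equality case is traced back to $K$ decomposing as a direct sum $K_\xi\oplus K_{\xi'}$, which gives the complementary-subspace condition.

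For the sufficiency direction, I would first handle the discrete case: assume $\mu=\sum_{i=1}^{N}c_i(\delta_{u_i}+\delta_{-u_i})$ is even, satisfies the subspace concentration condition, and has $\mathrm{supp}\mu$ not contained in any hyperplane. Parametrize origin-symmetric polytopes by support numbers $\mathbf{h}=(h_1,\ldots,h_N)$ with $h_i=h_{-i}>0$, setting $P(\mathbf{h})=\bigcap_{i}H_{\pm u_i,h_i}^{-}$, and consider the variational problem
\[
\inf\Bigl\{\,\textstyle\sum_{i=1}^{N}c_i\log h_i : V_n(P(\mathbf{h}))=|\mu|/n\Bigr\}.
\]
A Lagrange-multiplier computation using the logarithmic Minkowski differential $\frac{d}{dt}\log V_n(P(\mathbf{h}+t\mathbf{e}_i))\bigm|_{t=0}=nV_{P(\mathbf{h})}(\{u_i\})/(h_i V_n(P(\mathbf{h})))$ shows that any interior minimizer has $V_{P(\mathbf{h}^*)}(\{u_i\})=c_i$, so the cone-volume measure of $P(\mathbf{h}^*)$ equals $\mu$.

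The main obstacle, and where the subspace concentration hypothesis is indispensable, is showing the infimum is attained by a genuine body with the origin in its interior, i.e., that no $h_i\to 0$ along a minimizing sequence. I would argue by contradiction: if some $h_i\to 0$, pass to a subsequence along which the set $I=\{i:h_i\to 0\}$ is fixed and the vectors $\{u_i:i\in I\}$ span a proper subspace $\xi\subsetneq\mathbb{R}^n$. Using a careful estimate of $V_n(P(\mathbf{h}))$ in terms of the decomposition along $\xi$ and $\xi^\perp$ (or by slicing), one shows that the normalized objective is bounded below by $\sum_{i\in I}c_i$ times a term going to $-\infty$, while the constraint forces a growth on the complementary $h_j$ controlled by $\dim\xi$. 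The inequality $\mu(\xi\cap\mathbb{S}^{n-1})\le\tfrac{\dim\xi}{n}|\mu|$ is precisely what prevents this blow-up from lowering the objective, and the strict inequality in the subspace concentration condition guarantees compactness; the equality case is handled by passing to the complementary subspace $\xi'$ and splitting $P$ as a direct sum, reducing dimensions. Finally, for a general even Borel measure $\mu$ I would approximate $\mu$ in the weak$^*$ topology by discrete even measures $\mu_k$ still satisfying subspace concentration (obtained, e.g., by a symmetric partition of $\mathbb{S}^{n-1}$), extract a Blaschke-selection subsequential limit of the corresponding bodies $K_k$, use the subspace concentration bound to keep the limit nondegenerate, and invoke weak$^*$ continuity of cone-volume measure on $\mathcal{K}_o^n$ to conclude $V_{K}=\mu$.
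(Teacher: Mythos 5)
This statement is not proved in the paper at all: it is quoted verbatim as the B{\"o}r{\"o}czky--Lutwak--Yang--Zhang existence theorem for the even logarithmic Minkowski problem, with a citation to \cite{Jams}, and the paper only \emph{uses} it (in Theorems \ref{relint}, \ref{rel=cal}, etc.). So there is no in-paper argument to compare yours against; what you have written is an outline of a proof of the original JAMS theorem. As an outline it does follow the spirit of the known proof: minimize the logarithmic functional $\int_{\mathbb{S}^{n-1}}\log h_Q\,d\mu$ under a volume constraint, identify the cone-volume measure of a minimizer with $\mu$ via the variational formula, and use the subspace concentration condition exactly where compactness of a minimizing sequence is at stake; necessity is a separate geometric inequality with equality forcing a direct-sum splitting.

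However, as a proof it has genuine gaps precisely at the points where the theorem is hard. First, the coercivity step is only asserted: the claim that the subspace concentration inequality ``is precisely what prevents this blow-up'' requires a quantitative estimate comparing $\sum_{i\in I}c_i\log h_i$ with the growth of the complementary support numbers forced by the volume constraint, and the equality case cannot be dismissed by ``passing to $\xi'$ and splitting $P$'' without an induction scheme showing that the two lower-dimensional measures again satisfy the condition and that the direct sum of the resulting bodies has the prescribed cone-volume measure. Second, the necessity direction is the other substantial half of the original paper; ``the cone over a face with normal $u\in\xi$ projects into $\xi$ in a volume-comparable way'' is not an argument, and the equality analysis (that $K=K_\xi\oplus K_{\xi'}$) is exactly what must be proved, not invoked. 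Third, your discrete-to-general reduction introduces an obstacle the original proof avoids: it is not clear that one can discretize an arbitrary even measure satisfying the subspace concentration condition so that the discrete approximants still satisfy it (lumping mass into atoms can saturate or violate the inequality on low-dimensional subspaces), and the limit passage needs uniform nondegeneracy (bounded diameters, full-dimensional limit) before weak continuity of $K\mapsto V_K$ can be applied. The original argument sidesteps this by running the minimization directly over all origin-symmetric convex bodies for a general even measure. In short, the statement should be treated as an imported theorem, and your sketch, while pointed in the right direction, does not constitute a proof of it.
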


\begin{lemma}\label{Henk}
	\emph{(Henk-Linke, \cite{Henk})}
	Let $P$ be a polytope in $\mathbb{R}^n$ with its centroid at the origin. Then its
	cone-volume measure $V_P$ satisfies the subspace concentration condition.
\end{lemma}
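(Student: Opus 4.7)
The plan is to prove two things separately: (i) for every proper subspace $\xi$ of $\mR$ with $\dim\xi=k$, the subspace concentration inequality $V_P(\xi\cap\mS)=\sum_{u_i\in\xi}V_P(\{u_i\})\le\frac{k}{n}V_n(P)$; and (ii) if equality holds for some $\xi$, then $V_P$ is concentrated on $\mS\cap(\xi\cup\xi')$ for a complementary subspace $\xi'$. Writing $P=\bigcap_{i=1}^N H^-_{u_i,h_i}$ with $h_i=h_P(u_i)>0$ (positivity follows from $o\in\mathrm{int}\,P$, itself a consequence of the centroid hypothesis), each cone-volume is $V_P(\{u_i\})=\frac{1}{n}h_i\mathcal H^{n-1}(F_i)$ for the facet $F_i$ with outer normal $u_i$.

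For the inequality, fix $\xi$ of dimension $k$, $1\le k\le n-1$, set $I=\{i:u_i\in\xi\}$, $J=\{i:u_i\notin\xi\}$, and $S:=\sum_{i\in I}V_P(\{u_i\})$. Slice $P$ by the $k$-planes $\xi+y$, $y\in\xi^\perp$, and set $g(y):=V_k(P\cap(\xi+y))$; by Brunn's theorem $g^{1/k}$ is concave on $\pi_{\xi^\perp}(P)$. The centroid at $o$ yields the two moment identities $\int_{\xi^\perp}y\,g(y)\,dy=o$ and $\int_{\xi^\perp}g(y)\,c_\xi(P^y)\,dy=o$, where $c_\xi(P^y)\in\xi$ is the centroid of the slice $P^y=P\cap(\xi+y)$. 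Meanwhile, $S$ equals the volume of the ``$I$-cone region'' $W_I:=\bigcup_{i\in I}\mathrm{conv}(o,F_i)$, i.e. the set of points $x\in P$ whose outward ray from $o$ exits $P$ through an $I$-facet; the crucial structural fact is that since $u_i\in\xi$ for every $i\in I$, each $I$-facet lies in a hyperplane parallel to $\xi^\perp$, and the ray-exit time at the $i$-th hyperplane $t_i=h_i/(z\cdot u_i)$ depends only on the $\xi$-component $z$, not on the $\xi^\perp$-component $y$. Consequently the $y$-slice of $W_I$ is a homothet of a reference slice, with scale depending affinely on the data, and $V_n(W_I)$ can be written as an explicit slice integral of $g$ against affine functions of $y$. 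Combining this representation with the two moment identities and a Jensen-type convexity argument driven by the concavity of $g^{1/k}$ yields the bound $V_n(W_I)\le\frac{k}{n}V_n(P)$.

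For the equality case, tightness in the Jensen step forces $g$ to be essentially constant on $\pi_{\xi^\perp}(P)$ (equivalently, all slices $P^y$ are translates of one fixed $k$-dim polytope $P_1\subset\xi$), so $P=P_1\oplus P_2$ with $P_2$ in a subspace $\xi'$ complementary to $\xi$. Then $V_P$ is automatically supported on $\mS\cap(\xi\cup\xi')$, and the ``dual'' equality $V_P(\xi'\cap\mS)=\frac{n-k}{n}V_n(P)$ is immediate from $V_n(P)=V_n(W_I)+V_n(W_J)$. The main obstacle is the slice-integral representation of $V_n(W_I)$ and its sharp estimation: one must carefully track which $I$-facets are met by rays from $o$ through each slice $P^y$, essentially by applying the divergence theorem to the vector field $x\mapsto x$ on $W_I$ and splitting the boundary integral into $I$-facet versus interior-of-$P$ parts, then using the moment identities from the centroid condition to close the estimate. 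This is the technical heart of the Henk--Linke argument, and it is where the polytope structure plays its crucial role in reducing the problem to affine-in-$y$ slice data.
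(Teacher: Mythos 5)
First, a point of comparison: the paper does not prove this lemma at all --- it is quoted verbatim from Henk--Linke \cite{Henk} and used as a black box, so there is no internal proof to measure your attempt against; what you have written is an attempted reconstruction of the Henk--Linke argument itself, and as such it has genuine gaps.

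The decisive problems are in the middle of your sketch. (1) The structural claim that, because the exit time $t_i=h_i/(z\cdot u_i)$ depends only on the $\xi$-component, ``the $y$-slice of $W_I$ is a homothet of a reference slice, with scale depending affinely on the data'' is false. Take $n=3$, $\xi$ the horizontal coordinate plane and $P=Q\times[-1,1]$ a prism over a centred polygon $Q$: the $I$-facets are the vertical facets, and the slice of $W_I=\bigcup_{i\in I}\mathrm{conv}(o,F_i)$ at height $c$ is the annular region $\overline{Q\setminus cQ}$, which is not homothetic to the slice at height $0$ (all of $Q$). So the promised ``explicit slice integral of $g$ against affine functions of $y$'' does not exist in the form you assert, and everything downstream of it is unsupported. (2) Even granting some representation of $V_n(W_I)$, the actual estimate --- ``combining this representation with the two moment identities and a Jensen-type convexity argument driven by the concavity of $g^{1/k}$ yields the bound'' --- is never carried out; you yourself flag it as ``the technical heart,'' which means the inequality $V_n(W_I)\le\frac{k}{n}V_n(P)$ and the rigidity statement in the equality case are asserted, not proved. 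A further structural remark: your slicing is parallel to $\xi$ (fibres indexed by $y\in\xi^{\perp}$, Brunn concavity of $g^{1/k}$), whereas the facets $F_i$ with $u_i\in\xi$ project onto the \emph{boundary} of $P|\xi$ and fibre over it with $(n-k)$-dimensional fibres; the workable route (and essentially the Henk--Linke/B\"or\"oczky--LYZ one) slices orthogonally to $\xi$, uses the $\tfrac{1}{n-k}$-concavity of $z\mapsto\mathcal{H}^{n-k}(P\cap(z+\xi^{\perp}))$ on $P|\xi$, writes $\sum_{i\in I}V_P(\{u_i\})$ as a boundary integral over $\mathrm{bd}(P|\xi)$, and only then brings in the centroid (moment) conditions. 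With your choice of slicing direction the link between $\sum_{i\in I}V_P(\{u_i\})$ and $g$ is exactly the step that breaks, so the proposal does not constitute a proof of the lemma.
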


\vskip 15pt
\section{\bf Recursion formulas}\label{sec3}
\vskip5pt

In this part, we find the recursion formulas of the volume decomposition functionals $X_k$.

\begin{definition}\label{subXk}
	Suppose $P\in\mathcal{P}_o^{n}$ and  the unit outer normal  vectors  of $P$ are $u_1,u_2,\dots,u_N$.
	Let $\xi$ be a subspace of $\mathbb{R}^n$ and $k\in\{1,2,\dots,n-1,n\}.$ We define the quantity
	\[X_{k}(P;\xi)^{n}=\sum\limits_{\substack{u_{i_1}, \ldots, u_{i_n}\in \xi \\ \mathrm{dim}(\mathrm{span}\{u_{i_1},\ldots,u_{i_n}\})=k}} V_{P}\left(\left\{u_{i_1}\right\}\right) \cdots V_{P}\left(\left\{u_{i_n}\right\}\right).\]
\end{definition}
Note that if $\dim\xi<k$, then $X_{k}(P;\xi)=0$; If $\xi=\mR$, then
$X_{k}(P;\mathbb{R}^n)=X_{k}(P).$

For brevity,  write $\mu(\xi)=\mu(\xi\cap\mathbb{S}^{n-1})$,  for $\mu\in\mathcal{B}(\mathbb{S}^{n-1})$ and $\xi\in\mathrm{G}(n)$. For instance, if $\mu=V_P$, then $V_P(\xi)=V_P(\xi\cap\mS)$. It is interesting that we establish the \emph{local} version of  identity (\ref{identity}).

\begin{lemma}\label{msum}
	Let $\xi$ be a subspace of $\mR$. Then
	$$V_P(\xi)^n=X_{1}(P;\xi)^{n}+X_{2}(P;\xi)^{n}+\cdots+X_{\mathrm{dim}\xi}(P;\xi)^{n}.$$
\end{lemma}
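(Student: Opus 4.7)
The plan is to mimic the proof of the global identity (\ref{identity}) from \cite[Theorem 3.1]{lsx}, but restrict every sum to normals lying in $\xi$. The core observation is that the cone-volume measure $V_P$ of a polytope is a discrete measure supported on its unit outer normals $u_1,\dots,u_N$, so
\[
V_P(\xi)=V_P(\xi\cap\mathbb{S}^{n-1})=\sum_{u_i\in\xi}V_P(\{u_i\}).
\]

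Next I would just raise both sides to the $n$-th power and expand. Doing this gives
\[
V_P(\xi)^n=\sum_{(i_1,\dots,i_n):\,u_{i_1},\dots,u_{i_n}\in\xi} V_P(\{u_{i_1}\})\cdots V_P(\{u_{i_n}\}).
\]
Every tuple of normals in $\xi$ spans a subspace contained in $\xi$, so $\dim(\mathrm{span}\{u_{i_1},\dots,u_{i_n}\})$ is an integer $k$ with $1\leq k\leq\dim\xi$. Partition the sum on the right according to this dimension:
\[
V_P(\xi)^n=\sum_{k=1}^{\dim\xi}\ \sum_{\substack{u_{i_1},\dots,u_{i_n}\in\xi \\ \dim(\mathrm{span}\{u_{i_1},\dots,u_{i_n}\})=k}} V_P(\{u_{i_1}\})\cdots V_P(\{u_{i_n}\}).
\]
By Definition \ref{subXk}, the inner sum is exactly $X_k(P;\xi)^n$, and the identity follows.

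There is no real obstacle here; the argument is a direct multinomial expansion combined with a grouping by span-dimension. The one point worth a sentence is justifying that no tuple contributes with span of dimension exceeding $\dim\xi$, which is immediate from $\mathrm{span}\{u_{i_1},\dots,u_{i_n}\}\subseteq\xi$. The conventions $X_k(P;\xi)=0$ for $k>\dim\xi$ (noted after Definition \ref{subXk}) and the vanishing of terms where some $u_{i_j}\notin\xi$ ensure the ranges on both sides match cleanly.
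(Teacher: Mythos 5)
Your proof is correct and is essentially identical to the paper's: both write $V_P(\xi)$ as the sum of cone-volumes over normals in $\xi$, expand the $n$-th power multinomially, and partition the resulting sum by the dimension of the span of each $n$-tuple, identifying the inner sums with $X_k(P;\xi)^n$ via Definition \ref{subXk}. Nothing is missing.
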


\begin{proof}
	From the definitions of cone-volume measure and $X_k(P;\xi)$, it follows that
	\begin{align*}
		V_P(\xi)^n=&\big(\sum\limits_{\{i:u_i\in \xi\}}V_{P}\left(\left\{u_{i}\right\}\right)\big)^n=\sum\limits_{u_{i_1}, \ldots, u_{i_n}\in \xi}V_{P}\left(\left\{u_{i_1}\right\}\right) \cdots V_{P}\left(\left\{u_{i_n}\right\}\right)\\
		=&\sum\limits_{k=1}^{\mathrm{dim}\xi}\sum\limits_{\substack{u_{i_1}, \ldots, u_{i_n}\in \xi \\ \mathrm{dim}(\mathrm{span}\{u_{i_1},\ldots,u_{i_n}\})=k}} V_{P}\left(\left\{u_{i_1}\right\}\right) \cdots V_{P}\left(\left\{u_{i_n}\right\}\right)
		=\sum\limits_{k=1}^{\mathrm{dim}\xi}X_{k}(P;\xi)^{n}.
	\end{align*}
	That is, $V_P(\xi)^n=X_{1}(P;\xi)^{n}+X_{2}(P;\xi)^{n}+\cdots+X_{\mathrm{dim}\xi}(P;\xi)^{n}.$
\end{proof}

In particular, if $\xi$ is a 1-dimensional subspace, say $\xi=l_{u_i}$, then
\[ X_1(P;l_{u_i})^n=V_P(\{\pm u_i\})^n, \quad i=1,\dots,N;\]
If $\xi=\mR$, then
\[V_n(P)^n=V_n(\mR)^n=X_1(P)^n+X_2(P)^n+\cdots+X_n(P)^n. \]

Naturally, we pose the following

\noindent\textbf{Problem Y.}  Let $P$ be a polytope in $\mathbb{R}^n$  with its centroid at the origin.
Do there exist constants $c_1$ and $c_2$  depending on  $n$ and  $k, k\in \{1,2,\dots,n-1\},$ so that for $\xi\in\mathrm{G}_{n,k}$,
\begin{align*}
	X_k(P;\xi)\leq c_1V_{P}(\xi)\quad\text{and}\quad\frac{X_k(P;\xi)}{X_k(P)}\leq c_2?
\end{align*}

\vskip3pt
For $k\in\{1,\dots,n\}$, let
$$\{\xi_1^k,\xi_2^k,\dots,\xi_{m_k}^k\}=\{\mathrm{span}\{u_{i_1},\dots,u_{i_n}\}:i_1,\dots,i_n\in\{1,2,\dots,N\},\dim(\mathrm{span}\{u_{i_1},\dots,u_{i_n}\})=k\}.$$
That is, $\{\xi_1^k,\xi_2^k,\dots,\xi_{m_k}^k\}$ is the set of $k$-dimensional subspaces  spanned by unit outer normals of the polytope $P$, and it has a total of $m_k$ elements.

Putting $\xi=\xi_i^k$ in Lemma \ref{msum}, we obtain
\begin{equation}\label{dt}
	X_k(P;\xi_i^k)^n=V_P(\xi_i^k)^n-\sum\limits_{l=1}^{k-1}X_{l}(P;\xi_i^k)^{n},\quad  i=1,2,\dots,m_k,
\end{equation}
which suggests that $X_k(P;\xi_i^k)$ can be represented by ``lower order'' functionals $X_{l}(P;\xi_i^k)$, $l=1,\dots,k-1$.
\begin{lemma}\label{ksum}
	Let  $\xi_i^k\in \{\xi_1^k,\xi_2^k,\dots,\xi_{m_k}^k\}$, and $1\le l<k\le n$. Then
	\[X_l(P;\xi_i^k)^n=\sum\limits_{\{j\in\{1,\dots,m_l\}:\xi_j^l\subseteq \xi_i^k\}} X_l(P;\xi^l_j)^n.\]
\end{lemma}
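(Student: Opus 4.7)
The plan is a direct partitioning argument on the defining sum of $X_l(P;\xi_i^k)^n$. By Definition \ref{subXk}, this quantity is a sum over ordered $n$-tuples $(u_{i_1},\dots,u_{i_n})$ of unit outer normals of $P$ subject to two conditions: each $u_{i_s}$ lies in $\xi_i^k$, and $\dim(\mathrm{span}\{u_{i_1},\dots,u_{i_n}\})=l$. The natural idea is to partition these $n$-tuples according to the actual $l$-dimensional subspace that they span, and then rewrite each block as one of the $X_l(P;\xi_j^l)^n$.

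The key observation is that any $l$-dimensional subspace which arises as the span of some $n$-tuple of unit outer normals of $P$ must coincide with one of the $\xi_j^l$, since the family $\{\xi_1^l,\dots,\xi_{m_l}^l\}$ is by definition the complete list of such subspaces. Moreover, because every $u_{i_s}$ lies in $\xi_i^k$, the span is automatically contained in $\xi_i^k$. Therefore only indices $j$ with $\xi_j^l\subseteq\xi_i^k$ can contribute, which is exactly the range of summation on the right-hand side.

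For each such $j$, I would then check that the tuples whose span equals $\xi_j^l$ are precisely the $n$-tuples with all entries in $\xi_j^l$ whose span has dimension $l$ (as an $l$-dimensional subspace cannot be spanned by vectors lying in a proper subspace of itself); this block sum is therefore $X_l(P;\xi_j^l)^n$ by Definition \ref{subXk} applied to $\xi=\xi_j^l$. Summing the blocks over all admissible $j$ gives the stated identity. I don't anticipate any real obstacle, as the content is a disjoint-union decomposition of index sets; the only point that deserves care is checking that the partition by span is clean, i.e.\ each eligible tuple is counted in exactly one $\xi_j^l$-class, which follows from the defining completeness of the list $\{\xi_j^l\}$.
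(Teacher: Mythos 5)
Your proposal is correct and follows essentially the same route as the paper: partition the defining sum for $X_l(P;\xi_i^k)^n$ according to the $l$-dimensional subspace spanned by the tuple, note that each such subspace is some $\xi_j^l\subseteq\xi_i^k$, and identify each block with $X_l(P;\xi_j^l)^n$ via the observation that a tuple in $\xi_j^l$ spanning an $l$-dimensional subspace must span all of $\xi_j^l$.
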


\begin{proof}
	By the Definition \ref{subXk} and that $l<k$, it follows that
	\begin{align*}
		X_l(P;\xi_i^k)^n=&\sum\limits_{\substack{u_{i_1}, \ldots, u_{i_n}\in \xi_i^k\\ \mathrm{dim}(\mathrm{span}\{u_{i_1},\ldots,u_{i_n}\})=l}} V_{P}\left(\left\{u_{i_1}\right\}\right) \cdots V_{P}\left(\left\{u_{i_n}\right\}\right)\\
		=&\sum\limits_{\substack{v_1, \ldots, v_n \in \xi_i^k\cap\mathrm{supp} S_{P} \\ \mathrm{dim}(\mathrm{span}\left\{v_1, \ldots, v_n\right\} )=l}} V_{P}\left(\left\{v_1\right\}\right) \cdots V_{P}\left(\left\{v_n\right\}\right)\\
		=&\sum\limits_{\substack{v_1, \ldots, v_n \in \xi_i^k\cap\mathrm{supp} S_{P} \\ \operatorname{span}\left\{v_1, \ldots, v_n\right\} \in \{\xi_1^l,\dots,\xi_{m_l}^l\}}}V_{P}\left(\left\{v_1\right\}\right) \cdots V_{P}\left(\left\{v_n\right\}\right)\\
		=&\sum\limits_{\{j\in\{1,\dots,m_l\}:\xi_j^l\subseteq \xi_i^k\}}\sum\limits_{\substack{v_1, \ldots, v_n \in \xi_i^k\cap\mathrm{supp} S_{P} \\ \operatorname{span}\left\{v_1, \ldots, v_n\right\}=\xi_j^l}}V_{P}\left(\left\{v_1\right\}\right) \cdots V_{P}\left(\left\{v_n\right\}\right)\\
		=&\sum\limits_{\{j\in\{1,\dots,m_l\}:\xi_j^l\subseteq \xi_i^k\}}\sum\limits_{\substack{v_1 ,\ldots, v_n \in \xi_j^l\cap\mathrm{supp} S_{P} \\ \mathrm{dim}(\mathrm{span}\left\{v_1, \ldots, v_n\right\} )=l}}V_{P}\left(\left\{v_1\right\}\right) \cdots V_{P}\left(\left\{v_n\right\}\right)\\
		=&\sum\limits_{\{j\in\{1,\dots,m_l\}:\xi_j^l\subseteq \xi_i^k\}} X_l(P;\xi^l_j)^n,
	\end{align*}
	as desired.
\end{proof}
Putting $k=n$ in Lemma \ref{ksum}, for $1\le l<n$, we obtain
\begin{equation}\label{l3.2}
	X_l(P)^n=X_l(P;\mathbb{R}^n)^n=X_l(P;\xi^l_1)^n+X_l(P;\xi^l_2)^n+\cdots+X_l(P;\xi^l_{m_l})^n.
\end{equation}

Combining Lemma \ref{msum} and Lemma \ref{ksum}, we derive the following recursion formulas.
\begin{align}
	&X_1(P;\xi_i^1)^n=V_P(\xi_i^1)^n,\quad i=1,2,\dots,m_1;\label{22}\\
	&X_1(P;\xi)^n=\sum\limits_{\{i:\xi_i^1\subseteq \xi\}}X_1(P;\xi_i^1)^n,\quad \xi\in\mathrm{G}(n);\label{11}\\
	&X_k(P;\xi^k_i)^n=V_P(\xi^k_i)^n-\sum\limits_{l=1}^{k-1}X_l(P;\xi_i^k)^n,\quad  i=1,2,\dots,m_k,\quad k=2,3,\dots,n;\label{44}\\
	&X_l(P;\xi_i^k)^n=\sum\limits_{\{j\in\{1,\dots,m_l\}:\xi_j^l\subseteq \xi_i^k\}} X_l(P;\xi^l_j)^n,\quad  i=1,2,\dots,m_k,\quad 1\le l<k\le n.\label{33}
\end{align}

These recursion formulas  have the function of ``dimension reduction". As applications, we work out the polynomial expressions of  $X_2^n$ and $X_3^n$.

\begin{example}\label{X_2}
	Suppose $P\in\mathcal{P}_{o}^n$ and $\mathrm{supp}S_P\cup\mathrm{supp}S_{-P} =\{\pm u_1,\pm u_2,\dots,\pm u_N\}$. Then
	\[X_2(P)^n=\sum\limits_{i=1}^{m_2}\big((\sum\limits_{\{j:{u_j}\in\xi^2_{i}\}}V_P(\{\pm u_j\}))^n-\sum\limits_{\{j:{u_j}\in\xi^2_{i}\}}V_P(\{\pm u_j\})^n\big).\]
	
	Indeed, by (\ref{l3.2}), (\ref{44}), (\ref{11}), (\ref{22}) and the definition of $V_P$, we have
	\begin{align*}
		&X_2(P)^n=\sum_{i=1}^{m_2}X_2(P;\xi_i^2)^n\\
		=&\sum\limits_{i=1}^{m_2}\big( V_P(\xi_i^2)^n-X_{1}(P;\xi_i^2)^{n}\big)\\
		=&\sum\limits_{i=1}^{m_2}\big( V_P(\xi_i^2)^n-\sum\limits_{\{j:\xi_j^1\subseteq \xi_i^2\}}X_1(P;\xi_j^1)^n\big)\\
		=&\sum\limits_{i=1}^{m_2}\big( V_P(\xi_i^2)^n-\sum\limits_{\{j:\xi_j^1\subseteq \xi_i^2\}}V_P(\xi_j^1)^n\big)\\
		=&\sum\limits_{i=1}^{m_2}\big((\sum\limits_{\{j:{u_j}\in\xi^2_{i}\}}V_P(\{\pm u_j\}))^n-\sum\limits_{\{j:{u_j}\in\xi^2_{i}\}}V_P(\{\pm u_j\})^n\big),
	\end{align*}
	as desired.
\end{example}

\begin{example}\label{X_3}
	Suppose $P\in\mathcal{P}_{o}^n$ and $\mathrm{supp}S_P\cup\mathrm{supp}S_{-P}=\{\pm u_1,\pm u_2,\dots,\pm u_N\}$. Then
	\[X_3(P)^n=\sum\limits_{i=1}^{m_3}\big\{ V_P(\xi_i^3)^n-\sum\limits_{\{l:\xi_l^2\subseteq \xi_i^3\}}\big( V_P(\xi_l^2)^n-\sum\limits_{\{j:u_j\in \xi_l^2\}} V_P(\{\pm u_j\})^n\big) -\sum\limits_{\{j:u_j\in \xi_i^3\}}V_P(\{\pm u_j\})^n\big\}.\]
	
	Indeed, by (\ref{l3.2}), (\ref{44}), (\ref{33}), (\ref{11}), (\ref{22}) and the definition of $V_P$, we have
	\begin{align*}
		&X_3(P)^n=\sum_{i=1}^{m_3}X_3(P;\xi_i^3)^n\\
		=&\sum\limits_{i=1}^{m_3}\big( V_P(\xi_i^3)^n-X_{2}(P;\xi_i^3)^{n}-X_{1}(P;\xi_i^3)^{n}\big)\\
		=&\sum\limits_{i=1}^{m_3}\big( V_P(\xi_i^3)^n-\sum\limits_{\{l:\xi_l^2\subseteq \xi_i^3\}}X_{2}(P;\xi_l^2)^n-\sum\limits_{\{j:\xi_j^1\subseteq \xi_i^3\}}X_{1}(P;\xi_j^1)^n\big)\\
		=&\sum\limits_{i=1}^{m_3}\big\{ V_P(\xi_i^3)^n-\sum\limits_{\{l:\xi_l^2\subseteq \xi_i^3\}}\big( V_P(\xi_l^2)^n-\sum\limits_{\{j:\xi_j^1\subseteq \xi_l^2\}} X_1(P;\xi_j^1)^n\big) -\sum\limits_{\{j:\xi_j^1\subseteq \xi_i^3\}}X_{1}(P;\xi_j^1)^n\big\}\\
		=&\sum\limits_{i=1}^{m_3}\big\{ V_P(\xi_i^3)^n-\sum\limits_{\{l:\xi_l^2\subseteq \xi_i^3\}}\big( V_P(\xi_l^2)^n-\sum\limits_{\{j:\xi_j^1\subseteq \xi_l^2\}} V_P(\xi_j^1)^n\big) -\sum\limits_{\{j:\xi_j^1\subseteq \xi_i^3\}}V_P(\xi_j^1)^n\big\}\\
		=&\sum\limits_{i=1}^{m_3}\big\{ V_P(\xi_i^3)^n-\sum\limits_{\{l:\xi_l^2\subseteq \xi_i^3\}}\big( V_P(\xi_l^2)^n-\sum\limits_{\{j:u_j\in \xi_l^2\}} V_P(\{\pm u_j\})^n\big) -\sum\limits_{\{j:u_j\in \xi_i^3\}}V_P(\{\pm u_j\})^n\big\},
	\end{align*}
	where $V_P(\xi_i^3)=\sum\limits_{\{j:u_j\in \xi_i^3\}} V_P(\{\pm u_j\})$, $V_P(\xi_l^2)=\sum\limits_{\{j:u_j\in \xi_l^2\}} V_P(\{\pm u_j\})$, as desired.
\end{example}

In the following, we evaluate the $X_3$ of a cylinder in $\mathbb{R}^4$.
\begin{example}\label{exm3.5}
	Let $u_i=(\cos\frac{2\pi i}{6},\sin\frac{2\pi i}{6},0,0),\  i=1,2,3;$  $u_{i}=(0,0,\cos\frac{2\pi i}{8},\sin\frac{2\pi i}{8}),\  i=4,5,6,7.$
	
	Define an even discrete measure $\mu$ on $\mathbb{S}^3$ as the following
	\begin{align*}
		\mu(\{u_i\})&=\mu(\{-u_i\})=\frac{1}{12},\quad i=1,2,3;\\
		\mu(\{u_i\})&=\mu(\{-u_i\})=\frac{1}{16},\quad i=4,5,6,7.
	\end{align*}
	Then $\mu(\mathbb{S}^3)=1$, and $\mathrm{supp}\mu=\{\pm u_1,\dots,\pm u_7\}$.
	
	The set $\{\xi_1^3,\xi_2^3,\dots,\xi_{m_3}^3\}$ consists of 7  subspaces with dimension 3 as follows
	\[\mathrm{span}\{u_4,u_5,u_6,u_7,u_i\},\ i=1,2,3;\quad \quad \mathrm{span}\{u_1,u_2,u_3,u_i\},\ i=4,5,6,7,\]
	and their mass are
	\begin{align*}
		\mu(\mathrm{span}\{u_4,u_5,u_6,u_7,u_i\})&=8\times\frac{1}{16}+2\times\frac{1}{12}=\frac{2}{3}<\frac{3}{4},\quad i=1,2,3;\\
		\mu(\mathrm{span}\{u_1,u_2,u_3,u_i\})&=6\times\frac{1}{12}+2\times\frac{1}{16}=\frac{5}{8}<\frac{3}{4},\quad i=4,5,6,7.
	\end{align*}
	
	The set $\{\xi_1^2,\xi_2^2,\dots,\xi_{m_2}^2\}$ consists of 14 subspaces with dimension $2$ as follows
	\[\mathrm{span}\{u_1,u_2,u_3\};\  \mathrm{span}\{u_4,u_5,u_6,u_7\};\ \mathrm{span}\{u_i,u_j\},\ i=1,2,3,\ j=4,5,6,7,\]
	and their mass are
	\begin{align*}
		\mu(\mathrm{span}\{u_1,u_2,u_3\})=6\times\frac{1}{12}=\frac{1}{2};\ \mu(\mathrm{span}\{u_4,u_5,u_6,u_7\})=8\times\frac{1}{16}=\frac{1}{2};\\ \mu(\mathrm{span}\{u_i,u_j\})=2\times\frac{1}{12}+2\times\frac{1}{16}=\frac{7}{24}<\frac{2}{4},\quad i=1,2,3,\ j=4,5,6,7.
	\end{align*}
	
	Thus, $\mu$ satisfies subspace concentration inequalities. Since  subspace $\mathrm{span}\{u_1,u_2,u_3\}$ is complementary to subspace $\mathrm{span}\{u_4,u_5,u_6,u_7\}$ in $\mathbb{R}^4$,
	it follows that $\mu$ satisfies subspace concentration condition. By the  B{\"o}r{\"o}czky-LYZ existence theorem \ref{jams}, there exists a cylinder $P\in\mathcal{P}_{os}^4$, so that $V_P=\mu$.
	
	Putting the above digits in Example \ref{X_3}, we obtain
	\begin{align*} X_3(P)^4=&3\big\{(\frac{2}{3})^4-\big[\big((\frac{1}{2})^4-4(\frac{1}{8})^4\big)+4\big((\frac{7}{24})^4-(\frac{1}{6})^4-(\frac{1}{8})^4\big)\big]-\big[4(\frac{1}{8})^4+(\frac{1}{6})^4\big] \big\}\\ +&4\big\{(\frac{5}{8})^4-\big[\big((\frac{1}{2})^4-3(\frac{1}{6})^4\big)+3\big((\frac{7}{24})^4-(\frac{1}{6})^4-(\frac{1}{8})^4\big)\big]-\big[3(\frac{1}{6})^4+(\frac{1}{8})^4\big] \big\}\\
		\approx&0.613.
	\end{align*}
\end{example}

\noindent \textbf{Remark.} In light of Theorem \ref{thm1.3} and the fact that $0.613>0.576=\frac{72}{125}$, it yields that
\[\sup_{P\in\mathcal{P}_4^4}\frac{X_3(P)}{V_4(P)}=0.576<0.613\le\sup_{P\in\mathcal{P}_c^4}\frac{X_3(P)}{V_4(P)},\]
which suggests us that $\frac{X_3}{V_4}$ does \emph{not} attain its extremum at \emph{simplex} in $\mathcal{P}_c^4$, i.e., the set of polytopes in $\mathbb{R}^4$ with centroid at the origin.

\vskip3pt
Since $X_k(P)^n$ is a homogenous polynomial in $V_P(\{\pm u_1\}),V_P(\{\pm u_2\}),\dots,V_P(\{\pm u_N\})$, to attack the Problem X, we must characterize the domain of functional $X_k$
\[\{(V_P(\{\pm u_1\}),\dots,V_P(\{\pm u_N\}))\in\mathbb{R}^N:P\in\mathcal{P}_c^n,\ \mathrm{supp}S_P\cup\mathrm{supp}S_{-P}=\{\pm u_1,\dots,\pm u_N\}\}.\]
Moreover, since $\frac{X_k}{V_n}$ is affine invariant, we focus on the normalized domain
\begin{align}
\label{calD}&D(u_1,\dots,u_N)\\
\nonumber=\{(V_P&(\{\pm u_1\}),\dots,V_P(\{\pm u_N\}))\in\mathbb{R}^N:P\in\mathcal{P}_c^n,\ V_n(P)=n,\ \mathrm{supp}S_P\cup\mathrm{supp}S_{-P}=\{\pm u_1,\dots,\pm u_N\}\}.
\end{align}

	In Section 4, we prove that $D(u_1,\dots,u_N)$ is precisely the \emph{relative interior} of the so-called \emph{matroid polytope}. Please refer to Theorem \ref{rel=cal} for details.
	
	In the following, we list some basic facts from matroid theory. For standard reference, see \cite{GMc,Oxley}.
	\vskip15pt
	\section{\bf Matroid polytope}\label{sec4}
	
	\subsection {Introduction of matroid and matroid polytope}	\
	\vskip5pt

	Matroid theory dates from the 1930's when van der Waerden in his ``Moderne
Algebra" first approached linear and algebraic dependence axiomatically
and Whitney in his basic paper \cite{Whitney} first used the term matroid. In the past several decades, matroid theory has been developing rapidly
and witnessed strong connections with other mathematical disciplines, such as algebraic geometry \cite{ADH, FS, GGMS, HSW},  lattice theory \cite{Brylawski, GZ} and graph theory \cite{DM, Tutte}.

For a finite set $E$, write $|E|$ for the number of elements of $E$.
	\begin{definition}\label{Mdef}
		A matroid is an ordered pair $(E,\mathcal{I})$ consisting of a finite set $E$ and a
		collection $\mathcal{I} $ of subsets of $E$ with the following three properties:
		
		$\mathrm{(i)}$ $\emptyset\in \mathcal{I}$.
		
		$\mathrm{(ii)}$ If $I\in \mathcal{I}$ and  $I'\subseteq I$, then $I'\in \mathcal{I}$.
		
		$\mathrm{(iii)}$ If $I_1$ and $I_2$ are in $\mathcal{I}$ and $|I_1|<|I_2|$, then there exists an element $e\in I_2-I_1$ such that $I_1\cup e\in  \mathcal{I}$.
		
\end{definition}		

If $M$ is the matroid $(E, \mathcal{I})$, then $M$ is called a matroid on $E$.
The members of $\mathcal{I}$  are called the \emph{independent sets} of $M$; a subset of $E$ that is not in $\mathcal{I}$ is called a \emph{dependent set} of $M$. In the sense of inclusion relation of sets, we call a  \emph{maximal} independent set in $M$ a \emph{basis} of $M$,
and a  \emph{minimal} dependent set in $M$ a \emph{circuit} of $M$.
Denote all bases and all circuits of $M$ by $ \mathcal{B}(M)$ and  $ \mathcal{C}(M)$, respectively.

For $x,y\in E$,  $x$ and $y$ are \emph{equivalent} if there exists a circuit $C$ with $\{x,y\}\subseteq C.$ The equivalence classes are the \emph{connected components} of $M$. Let $c(M)$ denote the number of connected components of $M$. We say that
	$M$ is \emph{connected} if $c(M) = 1.$	

The \emph{rank} of $X\in 2^E$ is defined as
\begin{equation*}
	r_M(X)=\max\{ |I|: I\subseteq X, I\in \mathcal{I}\}.
\end{equation*}

Let $\mathrm{cl}:2^E\to2^E$, defined for all $X\in2^E$ by
$$
\mathrm{cl}(X)=\{x \in E: r_M(X \cup \{x\})=r_M(X)\}.
$$
A subset  $X$ of $E$ for which $\mathrm{cl}(X) = X$ is called a \emph{flat} or a \emph{closed set} of $M$.

%
%
%
%
%
%
%
%
%

	%

	Clearly, once $\mathcal{I}$ has
	been specified, from the definition of bases, $\mathcal{B}(M)$ is determined. Conversely, $\mathcal{I}$ can be determined
	from the set $\mathcal{B}(M)$. In fact, from  property $\mathrm{(ii)}$ in
	Definition \ref{Mdef}, we conclude that the members of $\mathcal{I}$ are precisely all subsets of
	members of $\mathcal{B}(M)$. That is,
	$$\mathcal{I}=\bigcup\limits_{B\in \mathcal{B}(M)} 2^B.$$
	 Thus a matroid $M=(E, \mathcal{I})$ is uniquely determined by the set  $\mathcal{B}(M)$.
	
	It is interesting that from  $\mathcal{B}(M)$, the so-called \emph{matroid polytope} can be defined. In fact, let $E$ be a finite set, say,  $E=\{1,\dots,N\}$.  Given a basis $B\in\mathcal{B}(M)$, the \emph{indicator vector} of $B$ is defined as $e_B=\sum\limits_{\substack{i=1\\i\in B}}^N e_i,$
	where $e_i=\{\underbrace{0,\dots,0}_{i-1},1,\underbrace{0,\dots,0}_{N-i}\}$. Then,
		the  associated matroid polytope of $M$ is given by
		$$P_M=\mathrm{conv}\{e_B: B\in \mathcal{B}(M)\}.$$
Note that by property $\mathrm{(iii)}$, it follows that $|B_1|=|B_2|$ for $B_1,B_2\in\mathcal{B}(M)$. Assume $|B|=n$ for $B\in\mathcal{B}(M)$. Then, each $e_B$ has $n$ coordinates $1$
and $(N-n)$ coordinates $0$. Refer to \cite[p. 440]{FS} and \cite[p. 311]{GGMS} for more details on matroid polytopes.



	\begin{example}
		Suppose $E=\{1,2,3,4\}$ and $\mathcal{I}=\{\emptyset, \{1\},\{2\},\{3\},\{4\},\{1,2\}, \{1,3\}, \{1,4\}, \{2,3\}, \{2,4\} \}$. Then $M=(E,\mathcal{I})$ is a matroid. From the definition of the bases, we have
		$$\mathcal{B}(M)=\{\{1,2\}, \{1,3\}, \{1,4\}, \{2,3\}, \{2,4\}\}.$$
		Then the indicator vector set associated to $\mathcal{B}(M)$ is the following set
		\begin{align*}
			\{e_1+e_2,e_1+e_3,	e_1+e_4,	e_2+e_3,	e_2+e_4\}.
		\end{align*}
	    Therefore, the  associated matroid polytope of $M$ is
	    $$P_M=\mathrm{conv}\{ (1,1,0,0), (1,0,1,0),(1,0,0,1),(0,1,1,0),(0,1,0,1)\},$$ which is a square pyramid  in the $3$-dimension space $x_1+x_2+x_3+x_4=2$.

So, $P_M\subseteq[0,1]^4\cap H_{(1,1,1,1),2}$. See  figure (a) in the blow.
	\end{example}
\vskip-15pt
\begin{figure}[h]
	\subfigure[]{
		\centering
		\label{fig1}\includegraphics[width=0.35\textwidth]{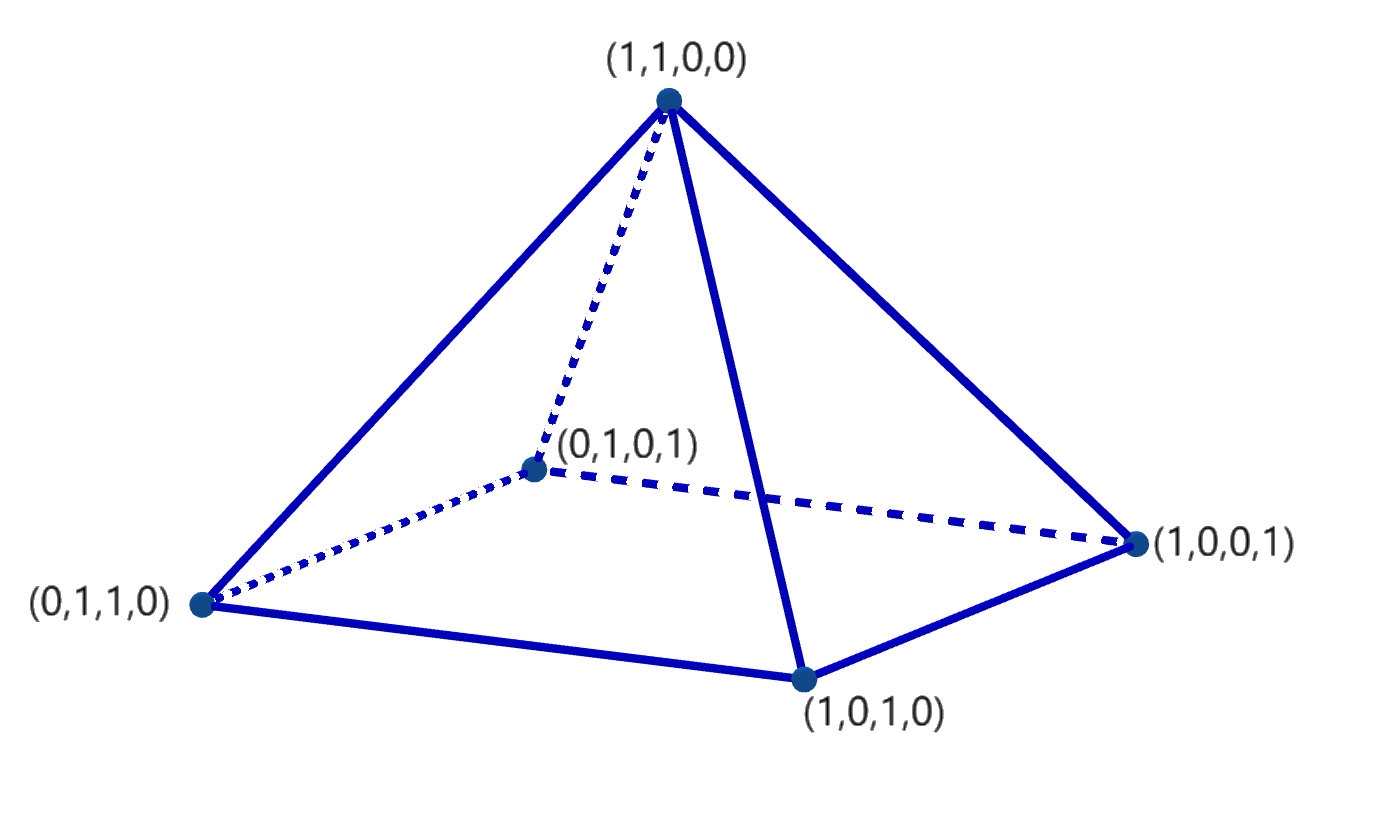}}
\subfigure[]{\centering
		\label{fig2}\includegraphics[width=0.35\textwidth]{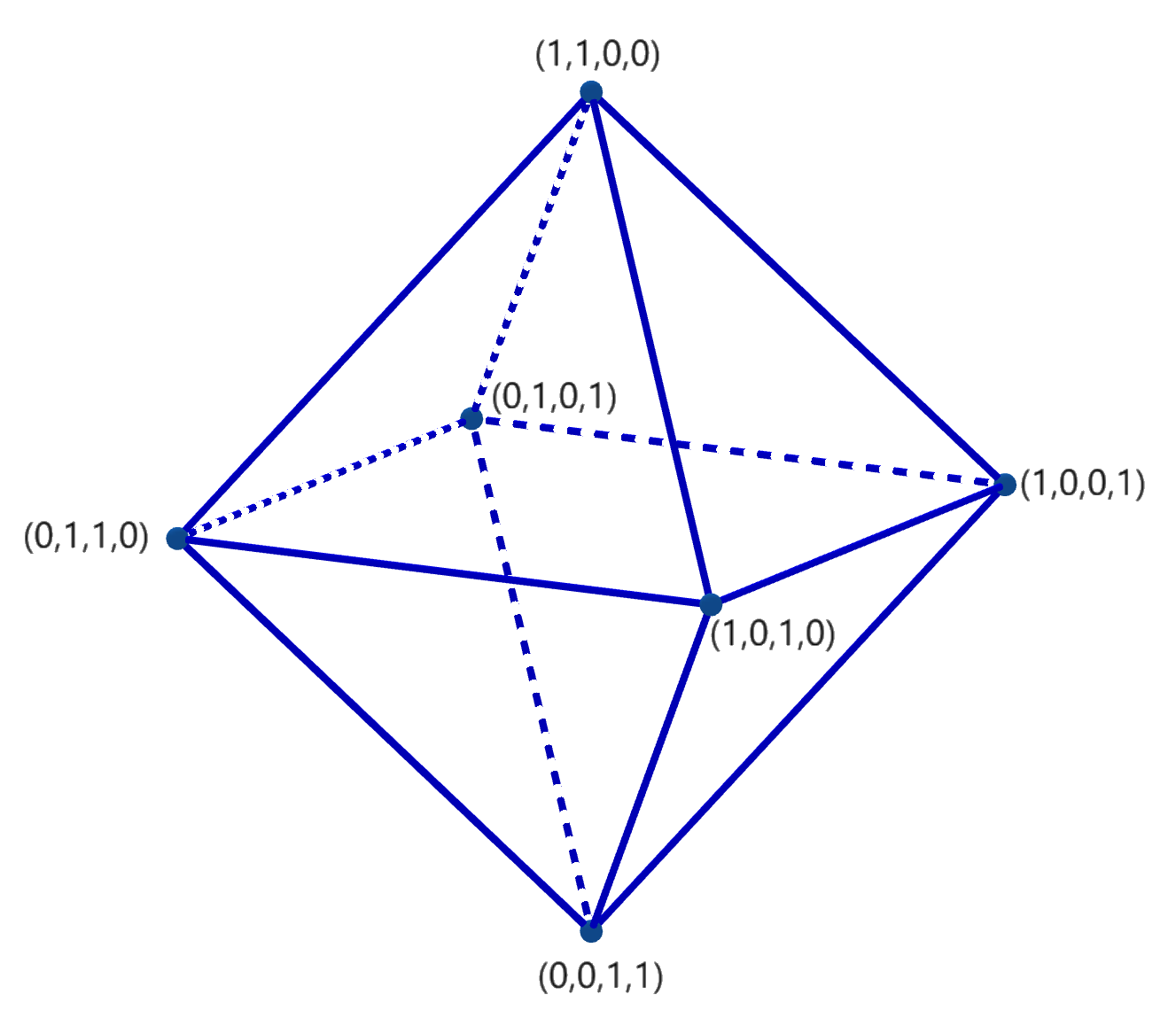}}
\end{figure}
\vskip-10pt
\begin{example}
	 If $\{3,4\}$ is also an independent set of $M$, then
	 $$\mathcal{B}(M)=\{\{1,2\}, \{1,3\}, \{1,4\}, \{2,3\}, \{2,4\},\{3,4\}\},$$
	 and  the indicator vector set associated to $\mathcal{B}(M)$ is the following set
	 \begin{align*}
	 		\{e_1+e_2,e_1+e_3,	e_1+e_4,	e_2+e_3,	e_2+e_4,e_3+e_4\}.
	 \end{align*}
	  Therefore, the  associated matroid polytope of $M$ is
	  $$P_M=\mathrm{conv}\{ (1,1,0,0), (1,0,1,0),(1,0,0,1),(0,1,1,0),(0,1,0,1),(0,0,1,1)\},$$ which is an octahedron  in the $3$-dimension space $x_1+x_2+x_3+x_4=2$.

So, $P_M=[0,1]^4\cap H_{(1,1,1,1),2}$. See figure (b) in the above.
\end{example}

	\begin{example}\label{exm3}
		In general, if  $E=\{1,\dots,N\}$ and bases $\mathcal{B}(M)$ is the collection of all $n$-element subsets of $\{ 1,\dots,N\}$, then $\mathcal{B}(M)$ has $\tbinom{N}{n}$ elements, and therefore  $P_M$ is the convex hull of $\tbinom{N}{n}$ vectors, which  have $n$ coordinates $1$
		and $(N- n)$ coordinates $0$.  Such a matroid polytope $P_M$ is called the \emph{hypersimplex} and denoted by $\bigtriangleup_N^n$, which is precisely the intersection of the $N$-cube $[0,1]^N \subseteq \mathbb{R}^N$ with the hyperplane $x_1+x_2+\cdots+x_N=n$. That is,
		$\bigtriangleup_N^n=[0,1]^N\cap H_{\mathrm{I}_N,n},$
		where $ \mathrm{I}_N=(1,\dots,1)\in\mathbb{R}^N.$
		\end{example}
		
		Significantly, there are more exact geometric information on the hypersimplex $\bigtriangleup_N^n$. For example, every edge of $\bigtriangleup_N^n$ is a translation of one of the vectors $e_i-e_j$, for $i\neq j$. Please consult \cite[p. 351]{hypers} for details. In 1987, Gelfand, Goresky, MacPherson and Serganova \cite[p. 311]{GGMS} provided an interesting geometric characterization for matroid polytope via $\bigtriangleup_N^n$.
	\begin{theorem}\emph{(Gelfand-Goresky-MacPherson-Serganova, \cite{GGMS})}
		\label{thenew1}
		Suppose $P$ is a polytope  contained in the hypersimplex $\bigtriangleup_N^n$. Then there exists a matroid $M$ such that
		$P=P_M$ if and only if the vertices of $P$ are a subset of the vertices of $\bigtriangleup_N^n$ and each edge of $P$ is a translation of one of the vectors $e_i-e_j$, for $i\neq j$.
	\end{theorem}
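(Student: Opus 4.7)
The plan is to prove the two implications separately.

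For the forward direction, suppose $P=P_M$. The vertices of $P_M$ are (a subset of) $\{e_B:B\in\mathcal{B}(M)\}$, and each such $e_B$ has exactly $n$ coordinates equal to $1$ and the rest $0$, so every vertex of $P_M$ lies in $\mathcal{F}_0(\bigtriangleup_N^n)$. For the edge condition, I plan to invoke the \emph{symmetric (strong) basis exchange} theorem of matroid theory: for any $B_1,B_2\in\mathcal{B}(M)$ and $i\in B_1\setminus B_2$, there exists $j\in B_2\setminus B_1$ such that both $B_1':=(B_1\setminus\{i\})\cup\{j\}$ and $B_2':=(B_2\setminus\{j\})\cup\{i\}$ are bases. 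Observe that $e_{B_1'}+e_{B_2'}=e_{B_1}+e_{B_2}$, so the segments $[e_{B_1},e_{B_2}]$ and $[e_{B_1'},e_{B_2'}]$ share a midpoint. Assuming $[e_{B_1},e_{B_2}]$ were an edge of $P_M$ with $|B_1\triangle B_2|\ge 4$, the defining property of faces (recalled in the Introduction) forces $e_{B_1'},e_{B_2'}\in[e_{B_1},e_{B_2}]$, hence $\{e_{B_1'},e_{B_2'}\}\subseteq\{e_{B_1},e_{B_2}\}$; this contradicts $B_1'\notin\{B_1,B_2\}$ (since $i\notin B_1'$ and $|B_1'\triangle B_2|=|B_1\triangle B_2|-2\ge 2$). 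Therefore every edge satisfies $|B_1\triangle B_2|=2$, i.e., is a translate of some $e_j-e_i$.

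For the reverse direction, suppose $P\subseteq\bigtriangleup_N^n$ has vertices in $\mathcal{F}_0(\bigtriangleup_N^n)$ and edges translating $e_i-e_j$. I would set $\mathcal{B}=\{B:|B|=n,\ e_B\in\mathcal{F}_0(P)\}$ and $\mathcal{I}=\bigcup_{B\in\mathcal{B}}2^B$. Properties (i) and (ii) of Definition \ref{Mdef} are automatic; the heart of the argument is verifying (iii): for $B_1,B_2\in\mathcal{B}$ and $i\in B_1\setminus B_2$, produce $j\in B_2\setminus B_1$ with $(B_1\setminus\{i\})\cup\{j\}\in\mathcal{B}$. To do this I would expand $e_{B_2}-e_{B_1}$ in the tangent cone of $P$ at the vertex $e_{B_1}$, which is generated by the edge directions there, writing
\[e_{B_2}-e_{B_1}=\sum_{(j,k)}c_{jk}(e_j-e_k),\qquad c_{jk}\ge 0,\]
where the sum runs over edge directions from $e_{B_1}$ (so $k\in B_1$, $j\notin B_1$, and $(B_1\setminus\{k\})\cup\{j\}\in\mathcal{B}$). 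Matching coordinates partitioned into the four classes $B_1\cap B_2$, $B_1\setminus B_2$, $B_2\setminus B_1$, $(B_1\cup B_2)^c$ pins down the support: for $m\in B_1\cap B_2$ the $m$-th equation reads $-\sum_j c_{jm}=0$, forcing those coefficients to vanish, and for $m\notin B_1\cup B_2$ the $m$-th equation reads $\sum_k c_{mk}=0$, forcing those as well. Hence only edges $e_j-e_k$ with $j\in B_2\setminus B_1$ and $k\in B_1\setminus B_2$ contribute, and the $i$-th coordinate equation becomes $\sum_{j\in B_2\setminus B_1}c_{ji}=1$, so some $j\in B_2\setminus B_1$ satisfies $c_{ji}>0$, meaning $(B_1\setminus\{i\})\cup\{j\}\in\mathcal{B}$, as required. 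Once $\mathcal{B}$ is confirmed to be the basis set of a matroid $M$, the identity $P=P_M$ is immediate since both are the convex hull of the same vertex set.

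The main obstacle is the reverse direction's requirement that the exchange element $j$ lie specifically in $B_2\setminus B_1$ rather than merely in $B_1^c$; the coordinate-class bookkeeping in the tangent-cone expansion is the essential trick that isolates this. The forward direction is clean once the (nontrivial but classical) symmetric basis exchange theorem is available.
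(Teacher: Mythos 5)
The paper does not supply a proof of this theorem; it is stated as a cited result from \cite{GGMS} and used as a black box, so there is no in-paper argument to compare against. Reviewing your proof on its own terms: both directions are essentially correct and recover the classical GGMS argument. The forward direction correctly invokes the symmetric (strong) basis-exchange theorem, notes $e_{B_1'}+e_{B_2'}=e_{B_1}+e_{B_2}$, applies the paper's midpoint definition of face, and observes that a $0$-$1$ vector lying on the segment $[e_{B_1},e_{B_2}]$ with $B_1\neq B_2$ must coincide with an endpoint; the parity of $|B_1\triangle B_2|$ then forces the conclusion. The reverse direction's tangent-cone expansion and four-class coordinate bookkeeping is the right device, and it does succeed in isolating $j\in B_2\setminus B_1$ rather than merely $j\notin B_1$.

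Two small points you should make explicit. First, $\mathcal{B}\neq\emptyset$: since $P$ is a nonempty polytope, it has a vertex, and by hypothesis that vertex is some $e_B$ with $|B|=n$. Second, what you actually verify in the reverse direction is the basis-exchange axiom for $\mathcal{B}$ (all elements of $\mathcal{B}$ having common size $n$ by construction), not axiom (iii) of Definition \ref{Mdef} for $\mathcal{I}$ directly; you should appeal to the standard equivalence that a nonempty antichain satisfying basis exchange is the basis set of a matroid whose independent sets are exactly $\bigcup_{B\in\mathcal{B}}2^B$. With those two clarifications, the argument is complete and correct.
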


	In 2005, Feichtner and Sturmfels \cite[pp. 441-442]{FS} obtained an explicit representation for $P_M$ and its dimension by making use of Theorem \ref{thenew1}.

\begin{theorem}\emph{(Feichtner-Sturmfels, \cite{FS})}  \label{sturmf}Let $M=(E,\mathcal{I})$ be a matroid. Then
	\begin{equation}\label{prez0}
		P_M=\{(x_1,\dots,x_N)\in \bigtriangleup_N^n: \sum_{i\in F}x_i\le{r_M}(F)\:\text{for all flats}\:F\ \text{of}\ M\},
	\end{equation}
and its dimension $\dim P_M=n-c(M)$.
\end{theorem}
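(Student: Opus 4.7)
\medskip

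The plan is to establish the two inclusions of the set identity separately and then deduce the dimension formula from the connected-component decomposition of $M$.

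Denote by $Q$ the polytope on the right-hand side of \eqref{prez0}. The containment $P_M\subseteq Q$ is the straightforward direction: for any basis $B\in\mathcal{B}(M)$ and any flat $F$ of $M$, the intersection $B\cap F$ is independent in $M$ (being a subset of $B$), hence $|B\cap F|\le r_M(F)$. Translating this into coordinates gives $\sum_{i\in F}(e_B)_i=|B\cap F|\le r_M(F)$, so every indicator vector $e_B$ lies in $Q$. Because $Q$ is an intersection of half-spaces (hence convex), this inequality propagates to $P_M=\mathrm{conv}\{e_B:B\in\mathcal{B}(M)\}\subseteq Q$.

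The reverse inclusion $Q\subseteq P_M$ is the substantive step. Following the hint provided by Theorem \ref{thenew1}, the strategy is to verify that $Q\subseteq\bigtriangleup_N^n$, that every vertex of $Q$ has coordinates in $\{0,1\}$ (i.e.\ is a vertex of $\bigtriangleup_N^n$), and that every edge of $Q$ is a translate of some $e_i-e_j$. Once these three properties are in hand, Theorem \ref{thenew1} provides a matroid $M'$ with $Q=P_{M'}$, and I would then identify $M'$ with $M$ by showing they share the same bases, using the already-proved inclusion $P_M\subseteq Q$ and the fact that a matroid is determined by its basis set. The vertex-integrality can be argued by an LP/greedy argument: for any generic linear functional $c$, the maximizer of $c\cdot x$ on $Q$ coincides with $e_B$ for the basis $B$ produced by the greedy algorithm applied to the ranks $r_M$ on flats; alternatively, Edmonds' polymatroid theorem gives $\{x\ge 0:\sum_{i\in A}x_i\le r_M(A)\ \forall A\subseteq E\}=\mathrm{conv}\{e_I:I\in\mathcal{I}\}$, and intersecting with the hyperplane $\sum x_i=n$ cuts out precisely $P_M$; the passage from constraints on arbitrary subsets to constraints on flats uses $r_M(\mathrm{cl}(A))=r_M(A)$ together with $\sum_{i\in\mathrm{cl}(A)}x_i\ge\sum_{i\in A}x_i$ for $x\ge 0$, making the non-flat constraints redundant on the hyperplane.

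For the dimension, let $E=E_1\sqcup\cdots\sqcup E_{c(M)}$ be the partition of $E$ into connected components, with restricted matroids $M_j$ of rank $r_j$. Every basis of $M$ decomposes uniquely as $B=B_1\sqcup\cdots\sqcup B_{c(M)}$ with $B_j\in\mathcal{B}(M_j)$, and conversely. Consequently $P_M$ is the direct sum (in orthogonal coordinate subspaces $\mathbb{R}^{E_j}$) of the matroid polytopes $P_{M_j}$, each lying in the affine hyperplane $\sum_{i\in E_j}x_i=r_j$ of $\mathbb{R}^{E_j}$. Thus $\dim P_M=\sum_{j=1}^{c(M)}\dim P_{M_j}$, and it remains to show that a connected matroid $M_j$ on $|E_j|$ elements satisfies $\dim P_{M_j}=|E_j|-1$. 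This reduces to producing $|E_j|$ affinely independent indicator vectors of bases of $M_j$, which can be constructed from a fixed basis by repeated circuit-exchange moves; connectivity of $M_j$ guarantees that these exchanges can reach enough directions $e_i-e_k$ to span the hyperplane. Summing yields the claimed formula.

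The main obstacle is the reverse inclusion $Q\subseteq P_M$: verifying that the flat-inequality description has only $0/1$-vertices and only $(e_i-e_j)$-edges is the delicate combinatorial heart of the statement, and it is precisely where the rank function and the flat structure interact with linear programming duality. The edge characterization in particular typically requires an argument that two adjacent vertices $e_B,e_{B'}$ necessarily satisfy $|B\triangle B'|=2$, which comes from the basis-exchange axiom but must be transferred to the polyhedral side via the given inequalities.
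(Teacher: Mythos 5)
The paper does not supply a proof of this statement; it quotes the result from Feichtner--Sturmfels (the remark after Theorem~\ref{thenew1} merely records that \cite{FS} obtained it from the GGMS characterization). So there is no in-paper argument to compare your proposal against, and I can only assess the sketch on its own merits.

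Your outline correctly identifies the two standard routes, and the easy inclusion $P_M\subseteq Q$ is handled cleanly, as is the reduction from all subsets $A\subseteq E$ to flats using $r_M(\mathrm{cl}(A))=r_M(A)$ and $\sum_{i\in\mathrm{cl}(A)}x_i\ge\sum_{i\in A}x_i$. But the remaining steps are asserted rather than proved, and two of them contain real gaps beyond mere incompleteness. First, in the GGMS-based route, after invoking Theorem~\ref{thenew1} to get $Q=P_{M'}$ for some matroid $M'$, you say you will ``identify $M'$ with $M$ by showing they share the same bases, using the already-proved inclusion $P_M\subseteq Q$''; but that inclusion only gives $\mathcal{B}(M)\subseteq\mathcal{B}(M')$. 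The converse requires a separate argument, e.g.\ if $e_S$ is a $0/1$ vertex of $Q$ with $|S|=n$, then taking $F=\mathrm{cl}(S)$ in the defining inequalities yields $n=\sum_{i\in S}(e_S)_i\le\sum_{i\in\mathrm{cl}(S)}(e_S)_i\le r_M(\mathrm{cl}(S))=r_M(S)\le n$, forcing $S\in\mathcal{B}(M)$; you never close this loop. Second, the entire vertex-integrality and edge-direction verification for $Q$ — which you yourself call the ``delicate combinatorial heart'' — is left as a pointer to greedy/LP duality or to Edmonds' theorem without actually carrying either out; the Edmonds route is in fact the cleaner one and would absorb most of this work, but as written the substantive combinatorics is not present. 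Third, in the dimension argument, the claim that a connected matroid $M_j$ on $|E_j|$ elements admits $|E_j|$ affinely independent basis indicator vectors is asserted via ``repeated circuit-exchange moves'' plus ``connectivity guarantees enough directions,'' but the lemma tying matroid connectivity to the affine span of the realized edge directions $e_i-e_k$ is exactly what has to be proved, and no argument is offered.
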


\noindent \textbf{Remark.} In our previous version of this paper submitted to a renowned jounal, we defined the so-called ``\emph{concentration polytope}" in terms of subspace concentration condition, and expended much effort to  characterize its vertices and edges. See \cite{arxiv} for details.

We  appreciate the anonymous
reviewer  very much for pointing out that our concentration polytope is precisely the matroid polytope $P_{M(u_1,\dots,u_N)}$ associated with a set of vectors $u_1,\dots,u_N$. See the following Section \ref{sec4.2} for details. More importantly, the reviewer provided two seminal papers \cite{FS} and \cite{GGMS}, and told us the vertices and edges of $P_{M(u_1,\dots,u_N)}$  have been fully characterized as Theorem \ref{thenew1}.
\vskip5pt

In the following, we return to the volume decomposition functional $X_k$ and Problem X. To solve Problem X,
we have to figure out the domain $D(u_1,\dots,u_N)$ of $X_k$. Fortunately, we discover that $D(u_1,\dots,u_N)$  is precisely the \emph{relative interior} of $P_{M(u_1,\dots,u_N)}$.
	\vskip10pt
	\subsection {The relative interior of matroid polytope}\label{sec4.2}\
	\vskip3pt
	Suppose $u_1,\dots,u_N\in\mS$ are pairwise unparallel and $\{u_1,\dots,u_N\}$  is not contained in any subsphere of $\mS$. Via these $u_i$, $i=1,\dots,N$, a matroid $M(u_1,\dots,u_N)$ is defined as the following.

	Specifically, let $E=\{1,2,\dots,N\}$ and
	$$ \mathcal{I}=\{I\in2^E:u_i,\ i\in I,\ \text{are linear independent}\}\cup \{\emptyset\}.$$
Then  $M(u_1,\dots,u_N)=(E,\mathcal{I})$ is a matroid, and
\begin{align*}
&\mathcal{B}(M)=\{B\in2^E:u_i,\ i\in B,\ \text{consist a basis of }\mR\},\\
&r_{M(u_1,\dots,u_N)}(X)=\dim (\mathrm{span} \{u_i:i\in X\}),\  \text{for }X\in 2^E,\\
&\mathrm{cl}(X)=\{i\in E: u_i\in \mathrm{span} \{u_i:i\in X\}\},\ \ \, \text{for }X\in 2^E.
\end{align*}

	By the definition of flat, $F$ is a flat of $M(u_1,\dots,u_N)$ if and only if there exists a subspace $\xi$ of $\mR$ such that $F=\{i\in E:u_i\in\xi\}$.
	Write $F_1,\dots,F_m$ for all the flats of $M(u_1,\dots,u_N)$. Let
\begin{equation}\label{defxi}
\xi_j=\mathrm{span}\{u_i:i\in F_j\}\quad  \text{and}\quad   v_j=\sum_{i\in F_j}e_i,\quad j=1,\dots,m.
\end{equation}
Then $r_{M(u_1,\dots,u_N)}(F_j)=\dim \xi_j$, and
	\begin{equation}\label{xpv}
		\sum_{i\in F_j}x_i=x\cdot v_j, \quad \forall x=(x_1,\dots,x_N)\in\mathbb{R}^N.
	\end{equation}
	Thus, by Theorem \ref{sturmf}, (\ref{xpv}) and Example \ref{exm3}, it follows that
	\begin{align}
		\nonumber P_{M(u_1,\dots,u_N)}=&\{(x_1,\dots,x_N)\in \bigtriangleup_N^n: \sum_{i\in F}x_i\le{r_{M(u_1,\dots,u_N)}}(F_j),\ j=1,\dots,m\}\\
		\nonumber=&\{(x_1,\dots,x_N)\in \bigtriangleup_N^n:x\cdot v_j\leq \dim \xi_j,\ j=1,\dots,m\}\\
		\nonumber=&[0,1]^N\cap H_{ \mathrm{I}_N,n}\cap(\cap_{j=1}^m H_{v_j,\mathrm{dim}\xi_j}^-)\\
		\label{prez}=&(\cap_{i=1}^N H_{-e_i,0}^-)\cap H_{ \mathrm{I}_N,n}\cap(\cap_{j=1}^m H_{v_j,\mathrm{dim}\xi_j}^-),
	\end{align}
where the last equality comes from that for $i\in\{1,\dots,N\}$, $\{i\}$ is indeed a flat of $M(u_1,\dots,u_N)$, say $F_i$, and therefore $v_i=e_i$ and $\dim\xi_i=\dim(\mathrm{span}\{u_i\})=1$. Consequently, $$\{(x_1,\dots,x_N)\in\mathbb{R}^N:x_i\le1\}= H_{e_i,1}^-=H_{v_i,\dim\xi_i}^-.$$

It is the representation (\ref{prez}) of $P_{M(u_1,\dots,u_N)}$ that
we fully characterize the matroid polytope $P_{M(u_1,\dots,u_N)}$ in our previous version \cite{arxiv}.

	In the following, we first characterize the relative interior of matroid polytope $P_{M(u_1,\dots,u_N)}$.
	
	Let $w_i\in\mathbb{S}^{N-1}$ and $a_i\in\mathbb{R}$, $i=1,2,\dots,M$. Suppose the set $Q=\cap_{i=1}^MH_{w_i,a_i}^-\subseteq\mathbb{R}^N$ is nonempty.
	
	\begin{lemma}\label{rintQ}
		Let $x\in Q$. Then $x\in \mathrm{relint}Q$ if and only if there does not exist an $i\in\{1,\dots,M\}$ such that $x\in H_{w_i,a_i}$ and $Q\nsubseteq H_{w_i,a_i}.$
	\end{lemma}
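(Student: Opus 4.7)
The plan is to treat this as a standard characterization of the relative interior of a polyhedron, organized around the dichotomy between constraints that bind on all of $Q$ (i.e., those indices $i$ with $Q \subseteq H_{w_i,a_i}$, which cut out the affine hull of $Q$) and those that only bind on parts of $Q$ (and hence are ``honest'' inequalities). I would split the proof into the two implications of the biconditional.

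For the forward direction, I would assume $x \in \mathrm{relint}\, Q$ and, toward a contradiction, suppose some index $i_0$ satisfies $x \in H_{w_{i_0}, a_{i_0}}$ while $Q \not\subseteq H_{w_{i_0}, a_{i_0}}$. Picking $y \in Q$ with $y \cdot w_{i_0} < a_{i_0}$, the direction $x - y$ lies in the linear subspace parallel to $\mathrm{aff}(Q)$, so the point $z(\varepsilon) := x + \varepsilon(x - y)$ remains in $\mathrm{aff}(Q)$ for every $\varepsilon$. By the definition of relative interior, $z(\varepsilon) \in Q$ for all sufficiently small $\varepsilon > 0$. However, a direct computation gives $z(\varepsilon) \cdot w_{i_0} = a_{i_0} + \varepsilon(a_{i_0} - y \cdot w_{i_0}) > a_{i_0}$, contradicting $Q \subseteq H_{w_{i_0}, a_{i_0}}^{-}$.

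For the converse, I would set $I_{\mathrm{eq}} = \{i : Q \subseteq H_{w_i, a_i}\}$, the indices of ``always-active'' constraints, so that $\mathrm{aff}(Q) \subseteq \bigcap_{i \in I_{\mathrm{eq}}} H_{w_i, a_i}$. By hypothesis, for every $i \notin I_{\mathrm{eq}}$ we have $x \cdot w_i < a_i$ strictly. A continuity argument then produces $\delta > 0$ such that any $y \in \mathrm{aff}(Q)$ with $|y - x| < \delta$ satisfies $y \cdot w_i = a_i$ automatically for $i \in I_{\mathrm{eq}}$ and $y \cdot w_i < a_i$ for $i \notin I_{\mathrm{eq}}$; hence $y \in Q$, which proves $x \in \mathrm{relint}\, Q$.

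I do not anticipate a serious obstacle: this is essentially the well-known fact that a point in a polyhedron lies in its relative interior precisely when every defining inequality that is not forced to equality on all of $Q$ holds strictly at that point. The only mild subtlety is the bookkeeping between inequalities that effectively cut out $\mathrm{aff}(Q)$ (and should not be counted as binding constraints in the usual sense) and genuine inequalities; the phrasing ``$Q \not\subseteq H_{w_i,a_i}$'' in the statement is exactly what makes this distinction rigorous.
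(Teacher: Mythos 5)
Your proof is correct and takes essentially the same approach as the paper: the necessity direction is verbatim the same perturbation argument (extending the segment $[y,x]$ past $x$ to contradict the inequality), and the sufficiency direction uses the same idea of producing a small ball around $x$, intersected with an affine flat, that lies in $Q$. The only cosmetic difference is that you intersect with $\mathrm{aff}(Q)$ while the paper intersects with the (possibly larger) affine flat $\bar S=\bigcap_{i:Q\subseteq H_{w_i,a_i}}H_{w_i,a_i}$; both yield $x\in\mathrm{relint}\,Q$ since $\mathrm{aff}(Q)\subseteq\bar S$.
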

	
	\begin{proof}
		Assume that
		$$Q\nsubseteq H_{w_i,a_i},\  1\le i\le j;\quad \text{and}\quad Q\subseteq H_{w_i,a_i},\ j+1\le i\le M,\quad  j\in\{0,1,\dots,M\}.$$
		Then $Q$ is completely lying in an affine subspace $\bar{S}=\cap_{i=j+1}^MH_{w_i,a_i}$. If $j=0$, then $Q=\bar{S}$ is itself an affine subspace;
		If $j=M$, then $Q$ is an $N$-dimensional polytope. For these two cases, the lemma naturally holds.  So assume that $1\leq j\leq M-1$ in the following.
		
		Suppose there does not exist an $i\in\{1,\dots,M\}$
		so that
		$x\in H_{w_i,a_i},$ but $Q\nsubseteq H_{w_i,a_i}$. Then $x\in\mathrm{int}H_{w_i,a_i}^-$, $1\le i\le j$. Let $\varepsilon=\min\limits_{1\le i\le j} \{a_i-x\cdot w_i\}$. Then $\varepsilon>0$, and
		$x+\varepsilon B\subseteq H_{w_i,a_i}^-$, $1\le i\le j$. So,
		\[(x+\varepsilon B)\cap \bar{S}\subseteq (\cap_{i=1}^jH_{w_i,a_i}^-)\cap \bar{S}= (\cap_{i=1}^jH_{w_i,a_i}^-)\cap(\cap_{i=j+1}^MH_{w_i,a_i})=Q,\]
		which implies that $x\in \mathrm{relint}Q$.
		
		On the contrary, assume $x\in \mathrm{relint}Q$. We aim to show that $x\notin H_{w_i,a_i},\  1\le i\le j$. If there exists an $i\in\{1,\dots,j\}$ so that $x\in H_{w_i,a_i}$,  then there exists $y\in Q$ with $y\in \mathrm{int}H_{w_i,a_i}^-$ by that $Q\nsubseteq H_{w_i,a_i}$. From the facts that $x\in \mathrm{relint}Q$ and the  segment $[x,y]\subseteq Q$, there is a sufficiently small $\rho>0$ so that $x+\rho(x-y)\in Q$. Then
		$(x+\rho(x-y))\cdot w_i=x\cdot w_i+\rho(x-y)\cdot w_i>a_i,$
		which contradicts that $Q\subseteq H_{w_i,a_i}^-$.
	\end{proof}
%

	\begin{theorem}\label{relint}
		Let $x\in \mathbb{R}^N$.  Then $x\in\mathrm{relint}P_{M(u_1,\dots,u_N)}$ if and only if there exists an origin-symmetric polytope $P$ in $\mR$ with $V_n(P)=n$ and  $\mathrm{supp}S_P=\{\pm u_1,\dots,\pm u_N\}$, such that
		$$x=\big(V_P(\{\pm u_1\}),\dots,V_P(\{\pm u_N\})\big).$$
	\end{theorem}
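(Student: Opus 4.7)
The plan is to apply Lemma \ref{rintQ} to the explicit halfspace representation (\ref{prez}) of $P_{M(u_1,\dots,u_N)}$, using the Henk--Linke theorem (Lemma \ref{Henk}) and the B\"or\"oczky--LYZ existence theorem (Lemma \ref{jams}) as the bridges between the combinatorial condition ``$x\in\mathrm{relint}P_M$'' and the geometric condition ``$x_i=V_P(\{\pm u_i\})$ for some admissible $P$''. By Lemma \ref{rintQ}, a point $x\in P_M$ lies in $\mathrm{relint}P_M$ iff for each defining halfspace $H_{w,a}^-$ of $P_M$, either $x\cdot w<a$ strictly or else $P_M\subseteq H_{w,a}$.

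For the ``if'' direction, let $P$ be an origin-symmetric polytope satisfying the hypotheses, and set $x_i=V_P(\{\pm u_i\})$. Then $x_i>0$ (since $\pm u_i\in\mathrm{supp}S_P=\mathrm{supp}V_P$), $\sum_i x_i=V_n(P)=n$, and Lemma \ref{Henk} gives $x\cdot v_j=V_P(\xi_j)\le\dim\xi_j$ for every flat $F_j$; hence $x\in P_M$. Whenever the equality $x\cdot v_j=\dim\xi_j$ holds, the subspace concentration condition (Definition \ref{scc}) supplies a subspace $\xi'$ complementary to $\xi_j$ in $\mathbb{R}^n$ with $V_P$ concentrated on $\xi_j\cup\xi'$. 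Since every $u_i$ lies in this union and $\xi_j\cap\xi'=\{0\}$, the indices $F^c=\{i:u_i\in\xi'\}$ satisfy $\mathrm{span}\{u_i:i\in F^c\}=\xi'$, and the rank identity $\dim\xi_j+\dim\xi'=n$ forces $M(u_1,\dots,u_N)$ to split as the direct sum of its restrictions to $F_j$ and $F^c$. Consequently every basis $B$ satisfies $|B\cap F_j|=\dim\xi_j$, so each vertex $e_B$ of $P_M$ lies in $H_{v_j,\dim\xi_j}$, giving $P_M\subseteq H_{v_j,\dim\xi_j}$ as required.

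For the ``only if'' direction, assume $x\in\mathrm{relint}P_M$. Each singleton $\{i\}$ is a flat, and any basis $B\ni i$ (which exists since $\{u_i\}$ is independent) gives a vertex $e_B$ of $P_M$ with $i$-th coordinate $1$; hence $P_M\not\subseteq H_{-e_i,0}$ and Lemma \ref{rintQ} forces $x_i>0$. Define the even discrete measure $\mu$ on $\mathbb{S}^{n-1}$ by $\mu(\{u_i\})=\mu(\{-u_i\})=x_i/2$, so $\mathrm{supp}\mu=\{\pm u_1,\dots,\pm u_N\}$ and $\mu(\mathbb{S}^{n-1})=n$. To apply Lemma \ref{jams}, I verify the subspace concentration condition for $\mu$: for any proper subspace $\xi\subsetneq\mathbb{R}^n$, let $F=\{i:u_i\in\xi\}$; this is a flat of $M(u_1,\dots,u_N)$ whose associated subspace $\xi_F$ lies in $\xi$, and $\mu(\xi)=x\cdot v_F\le\dim\xi_F\le\dim\xi$. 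When $\mu(\xi)=\dim\xi$, both inequalities saturate, forcing $\xi_F=\xi$ and $x\cdot v_F=\dim\xi_F$; Lemma \ref{rintQ} then yields $P_M\subseteq H_{v_F,\dim\xi_F}$, equivalently $|B\cap F|=\dim\xi_F$ for every basis $B$. A matroid argument then produces the direct-sum decomposition $M(u_1,\dots,u_N)=M|F\oplus M|F^c$, whose geometric content is exactly that $\xi'=\mathrm{span}\{u_i:i\in F^c\}$ is complementary to $\xi$ in $\mathbb{R}^n$, with $\mu(\xi')=n-\dim\xi=\dim\xi'$ and $\mathrm{supp}\mu\subseteq\xi\cup\xi'$. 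Lemma \ref{jams} now supplies an origin-symmetric convex body $P$ with $V_P=\mu$; since $\mu$ is finitely supported, $P$ is a polytope, and by construction $V_n(P)=n$, $\mathrm{supp}S_P=\{\pm u_1,\dots,\pm u_N\}$, and $V_P(\{\pm u_i\})=x_i$.

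The main technical obstacle is the matroid-theoretic step used in both directions: the equivalence between the polytope inclusion $P_M\subseteq H_{v_F,r_M(F)}$ (i.e.\ $|B\cap F|=r_M(F)$ for every basis $B$), the rank equality $r_M(F)+r_M(F^c)=n$, the direct-sum decomposition $M=M|F\oplus M|F^c$, and the subspace complementarity $\mathbb{R}^n=\mathrm{span}\{u_i:i\in F\}\oplus\mathrm{span}\{u_i:i\in F^c\}$. This chain of equivalences is precisely what lets the subspace concentration condition required by the B\"or\"oczky--LYZ existence theorem talk to the face structure of the matroid polytope. Everything else (the mass and support computations, the strictness $x_i>0$, and the passage from convex body to polytope) follows directly from the definitions and the cited existence results.
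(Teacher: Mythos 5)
Your proof is correct and shares the overall architecture of the paper's argument: apply Lemma~\ref{rintQ} to the halfspace presentation~(\ref{prez}) of $P_{M(u_1,\dots,u_N)}$, with Henk--Linke (Lemma~\ref{Henk}) and B\"or\"oczky--LYZ (Lemma~\ref{jams}) as the bridges between cone-volume geometry and matroid-polytope combinatorics. Where you genuinely diverge is in how the equality case is resolved. You handle it intrinsically via a matroid direct-sum argument: $|B\cap F|=r_M(F)$ for every basis $B$ forces $r_M(F)+r_M(F^c)=n$, hence $M=M|F\oplus M|F^c$, hence $\xi\oplus\xi'=\mathbb{R}^n$ with $\xi'=\mathrm{span}\{u_i:i\in F^c\}$, and this serves both as the proof of $P_M\subseteq H_{v_j,\dim\xi_j}$ in the sufficiency direction and as the source of the complementary subspace needed to verify the subspace concentration condition in the necessity direction. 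The paper avoids matroid structure theory in both places. For sufficiency it observes that, once Henk--Linke produces a complementary $\xi_j'$ with $V_P(\xi_j)+V_P(\xi_j')=V_n(P)$, the sets $\{i:u_i\in\xi_j\}$ and $\{i:u_i\in\xi_j'\}$ must partition $\{1,\dots,N\}$; then for any $y\in P_M$ the constraints $\sum_{u_i\in\xi_j}y_i\le\dim\xi_j$, $\sum_{u_i\in\xi_j'}y_i\le\dim\xi_j'$, and $\sum y_i=n$ together force both to be equalities. For necessity it invokes an auxiliary origin-symmetric polytope $Q$ with $\mathrm{supp}S_Q=\{\pm u_1,\dots,\pm u_N\}$ and $V_n(Q)=n$, passes $Q$ through the already-proved sufficiency to place its cone-volume vector on the hyperplane $H_{v_j,\dim\xi_j}$, and reads the complementary subspace off the subspace concentration condition for $V_Q$. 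Your route is cleaner in that it is self-contained (no auxiliary $Q$, no circular appeal to the sufficiency direction), but it silently relies on two standard-yet-uncited matroid facts: that $|B\cap F|=r_M(F)$ for all bases $B$ is equivalent to $r_M(F)+r_M(F^c)=r_M(E)$, and that this rank additivity is equivalent to $M=M|F\oplus M|F^c$. If you adopt this proof, those two steps should be spelled out or referenced (e.g.\ to Oxley). Also, your inequality $\mu(\xi)=x\cdot v_F\le\dim\xi_F\le\dim\xi$ implicitly uses the normalization $\mu(\mathbb{S}^{n-1})=n$, which you do have; it would help the reader to say so explicitly when invoking Definition~\ref{scc}.
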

	\begin{proof}
		We first prove the sufficiency.  Let $P$ be an origin-symmetric polytope with $V_n(P)=n$ and $\mathrm{supp}S_P=\{\pm u_1,\dots,\pm u_N\}$. Suppose $x=\big(V_P(\{\pm u_1\}),\dots,V_P(\{\pm u_N\})\big)$. Then
		$$x\in \mathrm{int}(H_{-e_i,0}^-),\quad i=1,\dots,N.$$
		
		 In light of formula (\ref{prez}) and Lemma \ref{rintQ}, it suffices to
		 show that  there is the following implication
		$$x\in H_{v_j,\mathrm{dim}\xi_j}\  \Longrightarrow\ P_{M(u_1,\dots,u_N)}\subseteq H_{v_j,\mathrm{dim}\xi_j},\quad j=1,\dots,m,$$
		where $\xi_j$ and $v_j$ are defined by (\ref{defxi}).

		Assume that $x\in H_{v_j,\mathrm{dim}\xi_j},$ for some $j\in\{1,\dots,m\}$. By equation (\ref{xpv}), it follows that
		\[\frac{V_P(\xi_j\cap\mathbb{S}^{n-1})}{V_n(P)}=\frac{1}{V_n(P)}\sum\limits_{\{i:u_i\in\xi_j\}}V_P(\{\pm u_i\})=\frac{1}{n}\sum\limits_{\{i:u_i\in\xi_j\}}x_i=\frac{x\cdot v_j}{n}=\frac{\mathrm{dim}\xi_j}{n}.\]
		From Lemma \ref{Henk},  there exists a subspace $\xi_j'$ complementary to $\xi_j$ such that
		\[\frac{1}{n}\sum\limits_{\{i:u_i\in\xi_j'\}}x_i=\frac{V_P(\xi_j'\cap\mathbb{S}^{n-1})}{V_n(P)}=\frac{\mathrm{dim}\xi_j'}{n}=1-\frac{\mathrm{dim}\xi_j}{n},\]
		which suggests us that $\{i:u_i\in\xi_j\}$ and $\{i:u_i\in\xi_j'\}$ constitute a disjoint partition of $\{1,2,\dots,N\}$.  So,
		for each $y=(y_1,\dots,y_N)\in P_{M(u_1,\dots,u_N)}$, it satisfies the equation
		\[\sum_{\{i:u_i\in\xi_j\}}y_i+\sum_{\{i:u_i\in\xi_j'\}}y_i=\sum_{i=1}^Ny_i=n.\]
		
		Meanwhile, by formula (\ref{prez}), it follows that
		 $$\sum_{\{i:u_i\in\xi_j\}}y_i\le\mathrm{dim}\xi_j\ \ \ \text{and}\ \ \sum_{\{i:u_i\in\xi_j'\}}y_i\le\mathrm{dim}\xi_j'=n-\mathrm{dim}\xi_j.$$
		 So, it is necessary that
		 \[\mathrm{dim}\xi_j=\sum_{\{i:u_i\in\xi_j\}}y_i=y\cdot v_j.\]
		 That is, $y\in H_{v_j,\mathrm{dim}\xi_j}.$ Therefore, $P_{M(u_1,\dots,u_N)}\subseteq H_{v_j,\mathrm{dim}\xi_j}$.
		\vskip5pt
		Second, we prove the necessity. Assume $x=(x_1,\dots,x_N)\in \mathrm{relint} P_{M(u_1,\dots,u_N)}$.
		Let $$\mu=\sum_{i=1}^{N}\frac{x_i}{2}(\delta_{u_i}+\delta_{-u_i}).$$
	 Then $\mu$ is a even discrete measure on $\mS$, and $\mu(\mS)=n$.
	
In the following, we verify that $\mu$ satisfies the subspace concentration condition.
	
	 For each proper subspace $\xi$, the subspace $\mathrm{span}(\xi\cap\{u_1,\dots,u_N\})\cup\{o\}$ is exactly one of elements of $\{\xi_1,\dots,\xi_m\}$ by the definition (\ref{defxi}), say $\xi_j$. By formula (\ref{prez}), it follows  that
\begin{equation}\label{eq4.5}
\frac{\mu(\xi\cap\mathbb{S}^{n-1})}{\mu(\mathbb{S}^{n-1})}=\frac{\sum\limits_{\{i:u_i\in\xi_j\}}x_i}{\sum_{i=1}^{N}x_i}\le\frac{\dim  \xi_j}{n}\le\frac{\dim\xi}{n}.
\end{equation}
		
		In addition, assume all the equalities hold in (\ref{eq4.5}).
		Then $\xi=\xi_j$. We aim to find a subspace $\xi_j'$
		complementary to $\xi_j$, such that $\mu(\xi'_j\cap\mathbb{S}^{n-1})=\mathrm{dim}\xi'_j$.

		First, since
		\[x\cdot v_j=\sum\limits_{\{i:u_i\in\xi_j\}}x_i=\mathrm{dim}\xi_j,\]
		it follows that $x\in H_{v_j,\mathrm{dim}\xi_j}.$ From the assumption that $x\in \mathrm{relint} P_{M(u_1,\dots,u_N)}$, and Lemma \ref{rintQ}, it follows
		that $P_{M(u_1,\dots,u_N)}\subseteq H_{v_j,\mathrm{dim}\xi_j}$.
		
		Second, since $\{u_1,\dots,u_N\}$  is not to be contained in any subsphere of $\mS$, there exists an origin-symmetric polytope $Q$ in $\mR$ with $V_n(Q)=n$ and  $\mathrm{supp}S_Q=\{\pm u_1,\dots,\pm u_N\}$.
		By the proof of the sufficient condition, we conclude that
		\begin{equation}
			\label{suppQ}
			\big( V_Q(\{\pm u_1\}),\dots,V_Q(\{\pm u_N\})\big)\in \mathrm{relint}P_{M(u_1,\dots,u_N)}\subseteq H_{v_j,\mathrm{dim}\xi_j}.
		\end{equation}
Thus, $\frac{V_Q(\xi_j\cap\mS)}{V_n(Q)}=\frac{\dim\xi_j}{n}$.
		By the Definition \ref{scc}, there exists a subspace $\xi_j'$
		complementary to $\xi_j$, such that
		\[\frac{\sum_{\{i:u_{i}\in\xi_j'\}}V_Q(\{\pm u_i\})}{V_n(Q)}=\frac{V_Q(\xi_j'\cap\mathbb{S}^{n-1})}{V_n(Q)}=\frac{\mathrm{dim}\xi_j'}{n},\]
		which suggests that $\{i:u_i\in\xi_j\}$ and $\{i:u_i\in\xi_j'\}$ constitute a disjoint partition of $\{1,2,\dots,N\}$. So,
		\[\frac{\mu(\xi'_j\cap\mathbb{S}^{n-1})}{\mu(\mS)}=\frac{1}{n}\sum_{\{i:u_i\in\xi_j'\}}x_i=\frac{1}{n}\sum_{i=1}^Nx_i-\frac{1}{n}\sum_{\{i:u_i\in\xi_j\}}x_i=1-\frac{\mathrm{dim}\xi_j}{n}=\frac{\mathrm{dim}\xi_j'}{n}.\]
		
		Therefore, $\mu$ satisfies the subspace concentration condition. By the   B{\"o}r{\"o}czky-LYZ existence theorem \ref{jams}, there exists an origin-symmetric polytope $P$ such that $V_P=\mu$. So, $V_n(P)=\mu(\mathbb{S}^{n-1})=n$ and
		\begin{equation*}
			x=(x_1,\dots,x_N)=(\mu(\{\pm u_1\}),\dots,\mu(\{\pm u_N\}))=(V_P(\{\pm u_1\}),\dots,V_P(\{\pm u_N\})).
		\end{equation*}
		
		Furthermore, in light of (\ref{suppQ}),  it follows that $P_{M(u_1,\dots,u_N)}\nsubseteq H_{-e_i,0}$, $i\in\{1,2,\dots,N\}$.  Combining this fact
		with  Lemma \ref{rintQ}, it follows that
		$x\in \mathrm{int}H_{-e_i,0}^-$. So, $V_P(\{\pm u_i\})=x_i>0$, $i=1,2,\dots,N$, which implies that
		$\mathrm{supp}S_P=\{\pm u_1,\dots,\pm u_N\}$.
		Consequently, the necessity part is derived.
	\end{proof}

With Theorem \ref{relint} in hand,  we show that the normalized domain $D(u_1,\dots,u_N)$  of the functionals $X_k$ (see the formula (\ref{calD})), is precisely the relative interior of the matroid polytope.
	\begin{theorem}\label{rel=cal}
		$D(u_1,\dots,u_N)=\mathrm{relint}P_{M(u_1,\dots,u_N)}.$
	\end{theorem}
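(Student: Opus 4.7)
The plan is to prove the two inclusions $\mathrm{relint}\,P_{M(u_1,\dots,u_N)} \subseteq D(u_1,\dots,u_N)$ and $D(u_1,\dots,u_N) \subseteq \mathrm{relint}\,P_{M(u_1,\dots,u_N)}$ separately, using Theorem \ref{relint} directly for the first and adapting its argument (with Lemma \ref{Henk} in place of Lemma \ref{jams}) for the second.

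The inclusion $\mathrm{relint}\,P_{M(u_1,\dots,u_N)} \subseteq D(u_1,\dots,u_N)$ is immediate: given $x$ in the relative interior, Theorem \ref{relint} supplies an origin-symmetric polytope $P$ with $V_n(P)=n$, $\mathrm{supp}\,S_P = \{\pm u_1,\dots,\pm u_N\}$, and $x = (V_P(\{\pm u_1\}),\dots,V_P(\{\pm u_N\}))$. Since origin-symmetric polytopes belong to $\mathcal{P}_c^n$ and have $\mathrm{supp}\,S_P = \mathrm{supp}\,S_{-P}$, such $P$ is an admissible representative in the definition of $D(u_1,\dots,u_N)$.

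For the reverse inclusion, I would fix $x \in D(u_1,\dots,u_N)$ arising from some $P \in \mathcal{P}_c^n$ with $V_n(P)=n$ and $\mathrm{supp}\,S_P \cup \mathrm{supp}\,S_{-P} = \{\pm u_1,\dots,\pm u_N\}$. First, I verify that $x \in P_{M(u_1,\dots,u_N)}$ using representation (\ref{prez}): each coordinate $x_i = V_P(\{u_i\})+V_P(\{-u_i\})>0$ by the support hypothesis, $\sum_{i=1}^N x_i = V_n(P) = n$, and for each flat $F_j$ with associated subspace $\xi_j$ defined in (\ref{defxi}), Lemma \ref{Henk} yields $x\cdot v_j = V_P(\xi_j\cap\mathbb{S}^{n-1}) \leq \tfrac{\dim\xi_j}{n}V_n(P) = \dim\xi_j$.

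To promote membership in $P_M$ to membership in the relative interior, I apply Lemma \ref{rintQ} to the half-space representation (\ref{prez}). The halfspaces $H_{-e_i,0}^-$ are harmless since each $x_i > 0$. For a flat hyperplane $H_{v_j,\dim\xi_j}$ with $x\cdot v_j = \dim\xi_j$, Lemma \ref{Henk} gives the subspace concentration \emph{condition}: there exists $\xi_j'$ complementary to $\xi_j$ in $\mathbb{R}^n$ on which $V_P$ is concentrated together with $\xi_j$. Hence $\{1,\dots,N\}$ is the disjoint union of $\{i:u_i\in\xi_j\}$ and $\{i:u_i\in\xi_j'\}$, so every $y \in P_{M(u_1,\dots,u_N)}$ satisfies both $\sum_{u_i\in\xi_j}y_i \leq \dim\xi_j$ and $\sum_{u_i\in\xi_j'}y_i \leq \dim\xi_j' = n-\dim\xi_j$, whose sum is forced to equal $\sum_i y_i = n$; both inequalities must therefore be equalities, giving $y\cdot v_j = \dim\xi_j$ and hence $P_M \subseteq H_{v_j,\dim\xi_j}$, as required.

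The main obstacle is conceptual rather than computational: in Theorem \ref{relint} the subspace concentration condition was imposed on the constructed measure and justified the existence of an origin-symmetric polytope via Lemma \ref{jams}, whereas here we run the logic in the opposite direction, starting from a (not necessarily symmetric) centroidally balanced $P$ and extracting the subspace concentration condition via Lemma \ref{Henk}. One must be careful that Lemma \ref{Henk} supplies the full condition (not just the inequality), because the equality case is precisely what identifies the flat hyperplanes that fully contain $P_M$, which is what Lemma \ref{rintQ} requires.
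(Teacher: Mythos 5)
Your proof is correct, and the first inclusion ($\mathrm{relint}\,P_{M} \subseteq D$) is handled exactly as in the paper. For the reverse inclusion you take a genuinely different route. The paper symmetrizes $V_P$ into the measure $\mu = \sum_i \tfrac{V_P(\{\pm u_i\})}{2}(\delta_{u_i}+\delta_{-u_i})$, observes that $\mu$ inherits the subspace concentration condition from $V_P$ (since subspaces are symmetric and Lemma~\ref{Henk} applies to $V_P$), invokes Lemma~\ref{jams} to produce an origin-symmetric $P'$ with $V_{P'}=\mu$, and then appeals to Theorem~\ref{relint} as a black box. You instead bypass Lemma~\ref{jams} entirely: you verify membership in $P_M$ directly from the representation~(\ref{prez}) using the inequality part of Lemma~\ref{Henk}, and then promote it to relative-interior membership via Lemma~\ref{rintQ} and the equality case of Lemma~\ref{Henk}, essentially re-running the sufficiency half of the proof of Theorem~\ref{relint} for a (not necessarily origin-symmetric) centroidally balanced $P$. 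This works precisely because Lemma~\ref{Henk} requires only that the centroid be at the origin, a point you correctly flag as the conceptual crux. Your approach buys a proof that uses no existence theorem for the reverse inclusion, at the cost of duplicating reasoning already contained in the proof of Theorem~\ref{relint}; the paper's approach is shorter because it reuses Theorem~\ref{relint} modularly. One small point you could make explicit: when the equality $x\cdot v_j = \dim\xi_j$ holds, you need the complementary index set $\{i:u_i\in\xi_j'\}$ to be a flat of $M(u_1,\dots,u_N)$ with rank $\dim\xi_j'$ so that the bound $\sum_{u_i\in\xi_j'}y_i \leq \dim\xi_j'$ is actually one of the defining constraints in~(\ref{prez}); this follows because $\{u_1,\dots,u_N\}$ spans $\mathbb{R}^n$ and is partitioned by $\xi_j$ and $\xi_j'$, forcing $\mathrm{span}\{u_i:u_i\in\xi_j'\}=\xi_j'$.
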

	\begin{proof}
		By Theorem \ref{relint}, we obtain that $\mathrm{relint}P_{M(u_1,\dots,u_N)}$ is the following set
		\[\{(V_P(\{\pm u_1\}),\dots,V_P(\{\pm u_N\}))\in\mathbb{R}^N:P\in\mathcal{P}_{os}^n,\ V_n(P)=n,\ \mathrm{supp}S_P=\{\pm u_1,\dots,\pm u_N\}\}.\]
		Comparing it with the definition of $D(u_1,\dots,u_N)$, it is obvious that $\mathrm{relint}P_{M(u_1,\dots,u_N)}\subseteq D(u_1,\dots,u_N)$.
		
	On the other hand, let $P\in\mathcal{P}_c^n$ with $V_n(P)=n$ and $\mathrm{supp}S_P\cup\mathrm{supp}S_{-P}=\{\pm u_1,\dots,\pm u_N\}$. Let
		$$\mu=\sum_{i=1}^{N}\frac{V_P(\{\pm u_i\})}{2}(\delta_{u_i}+\delta_{-u_i}).$$
		Then $\mu$ is an even discrete measure on $\mS$. By Lemma \ref{Henk}, $V_P$ satisfies the subspace concentration condition, and therefore $\mu$ satisfies the subspace concentration condition. By the B{\"o}r{\"o}czky-LYZ existence theorem \ref{jams}, there is a polytope $P'\in \mathcal{P}_{os}^n$ such that $V_{P'}=\mu$. Thus, $\mathrm{supp}S_{P'}=\{\pm u_1,\dots,\pm u_N\}$ and $V_n(P')=n$, and therefore
$(V_{P'}(\{\pm u_1\}),\dots,V_{P'}(\{\pm u_N\}))\in\mathrm{relint}P_{M(u_1,\dots,u_N)}$. So,
		$$(V_P(\{\pm u_1\}),\dots,V_P(\{\pm u_N\}))=(V_{P'}(\{\pm u_1\}),\dots,V_{P'}(\{\pm u_N\}))\in\mathrm{relint}P_{M(u_1,\dots,u_N)},$$
		which yields that $D(u_1,\dots,u_N)\subseteq\mathrm{relint}P_{M(u_1,\dots,u_N)}$.
	\end{proof}
	
	Combining Theorem \ref{rel=cal} with Theorem \ref{thenew1}, the characterization for the \emph{closure} $\overline{D}(u_1,\dots,u_N)$ of $D(u_1,\dots,u_N)$ is immediately obtained.

\begin{theorem} \label{vertxedge}
	Each vertex of $\overline{D}(u_1,\dots,u_N)$  has $n$ coordinates $1$
	and $(N- n)$ coordinates $0$ and each edge of $\overline{D}(u_1,\dots,u_N)$ is a translation of one of the vectors $e_i-e_j$, for $i\neq j$.
\end{theorem}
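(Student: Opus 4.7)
The plan is to deduce Theorem \ref{vertxedge} as a short consequence of Theorem \ref{rel=cal} combined with Theorem \ref{thenew1}. First I would take closures in the identity $D(u_1,\dots,u_N)=\mathrm{relint}\,P_{M(u_1,\dots,u_N)}$ established in Theorem \ref{rel=cal}. Since $P_{M(u_1,\dots,u_N)}$ is a (nonempty) convex polytope, a standard fact of convex geometry gives $\overline{\mathrm{relint}\,P_{M(u_1,\dots,u_N)}}=P_{M(u_1,\dots,u_N)}$, hence
\[
\overline{D}(u_1,\dots,u_N)=P_{M(u_1,\dots,u_N)}.
\]

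Next I would invoke the structural description provided by Theorem \ref{thenew1}. By definition, $P_{M(u_1,\dots,u_N)}$ is the convex hull of indicator vectors of bases of the matroid $M(u_1,\dots,u_N)$, and by Theorem \ref{sturmf} (or directly by the definition in Section \ref{sec4}) it is contained in the hypersimplex $\bigtriangleup_N^n$, where $n$ equals the common size of a basis (since any basis of $M(u_1,\dots,u_N)$ is a basis of $\mR$). Theorem \ref{thenew1} then says exactly that the vertices of $P_{M(u_1,\dots,u_N)}$ are among the vertices of $\bigtriangleup_N^n$ and that each edge is a translate of some $e_i-e_j$. Recalling from Example \ref{exm3} that the vertices of $\bigtriangleup_N^n$ are precisely the $0/1$-vectors with exactly $n$ coordinates equal to $1$, the vertex claim follows.

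In effect there is no real obstacle here: the theorem is a translation of Theorem \ref{thenew1} through the identification $\overline{D}(u_1,\dots,u_N)=P_{M(u_1,\dots,u_N)}$. The only point that requires a brief justification is the closure step, and if I wanted to avoid quoting the general fact I could argue directly: every point of $P_{M(u_1,\dots,u_N)}$ is a convex combination of its vertices, each vertex $e_B$ is the limit of points of the form $(1-\varepsilon)e_B+\varepsilon c$ as $\varepsilon\to 0^+$ where $c$ is any fixed point of $\mathrm{relint}\,P_{M(u_1,\dots,u_N)}$ (which is nonempty, e.g.\ by choosing $P$ to be any origin-symmetric polytope with $\mathrm{supp}\,S_P=\{\pm u_1,\dots,\pm u_N\}$ and $V_n(P)=n$, and applying Theorem \ref{relint}), so $P_{M(u_1,\dots,u_N)}\subseteq \overline{D}(u_1,\dots,u_N)$, while the reverse inclusion is trivial since $D(u_1,\dots,u_N)\subseteq P_{M(u_1,\dots,u_N)}$ and the latter is closed. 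With this identification in place, Theorem \ref{thenew1} yields both conclusions in one line.
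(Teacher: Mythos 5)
Your proposal is correct and follows essentially the same route as the paper: the paper states Theorem \ref{vertxedge} as an immediate consequence of combining Theorem \ref{rel=cal} with the Gelfand-Goresky-MacPherson-Serganova characterization (Theorem \ref{thenew1}), after noting that the closure of the relative interior of the polytope $P_{M(u_1,\dots,u_N)}$ is the polytope itself. You have simply made the closure step explicit, which the paper leaves implicit.
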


\vskip10pt
	\subsection {Applications of matroid theory to convex geometry}\
	\vskip3pt

	
%
%
	
	Define a mapping from the matroid polytope $P_{M(u_1,\dots,u_N)}$ to the set of finite even discrete measures on $\mS$ by
	\[x\longmapsto\mu_x=\sum_{i=1}^{N}\frac{x_i}{2}(\delta_{u_i}+\delta_{-u_i}),\quad \forall x=(x_1,\dots,x_N)\in P_{M(u_1,\dots,u_N)}.\]
	Theorem \ref{relint} shows that for each $x\in\mathrm{relint}P_{M(u_1,\dots,u_N)}$, there exist an origin-symmetric  polytope $P$ such that $\mu_x=V_P$. In the following, we focus on the points on the relative boundary of $P_{M(u_1,\dots,u_N)}$.
\begin{theorem} The relative boundary of $P_{M(u_1,\dots,u_N)}$ satisfies the following properties.

$\mathrm{(i)}$ For each vertex $x$ of $P_{M(u_1,\dots,u_N)}$, there is an origin-symmetric parallelotope $P$ such that $\mu_x=V_P$.

$\mathrm{(ii)}$ For any point $x$ in the relative interior of an  edge of $P_{M(u_1,\dots,u_N)}$,  there does not exist an origin-symmetric  polytope $P$ such that $\mu_x=V_P$.

$\mathrm{(iii)}$ Let $\{u_1,\dots,u_N\}$ be in general position. For each $x\in \mathrm{relbd}P_{M(u_1,\dots,u_N)}$, there  exists an origin-symmetric  polytope $P$ such that $\mu_x=V_P$  if and only if $x$ is a vertex  or $x\cdot e_i\in[0,1)$, $i=1,\dots,N$.
\end{theorem}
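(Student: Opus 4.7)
My plan is to prove the three parts separately, using Theorem \ref{vertxedge} to describe vertices and edges combinatorially, together with the Henk-Linke subspace concentration condition (Lemma \ref{Henk}) and the B\"or\"oczky-LYZ existence theorem (Lemma \ref{jams}). For part (i), a vertex $x$ of $P_{M(u_1,\dots,u_N)}$ is the indicator vector of a basis $\{u_{i_1},\dots,u_{i_n}\}$ of $\mathbb{R}^n$ by Theorem \ref{vertxedge}. I would realize $\mu_x = V_P$ for the origin-symmetric parallelotope $P = \sum_{k=1}^n [-t_k a_k, t_k a_k]$, where each $a_k$ spans the line orthogonal to $\mathrm{span}\{u_{i_j} : j \ne k\}$ and the positive scalars $t_k$ are chosen so that $V_n(P) = n$. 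Because a centered parallelotope distributes its volume equally among its $n$ pairs of opposite facets, this gives $V_P(\{\pm u_{i_k}\}) = V_n(P)/n = 1$ for each $k$ and $V_P(\{\pm u_i\}) = 0$ otherwise, matching $\mu_x$ exactly.

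For part (ii), Theorem \ref{vertxedge} writes an edge as $[x^{(1)}, x^{(1)} + e_i - e_j]$, whose endpoints correspond to bases $B_1 = F \cup \{j\}$ and $B_2 = F \cup \{i\}$ for some $F$ with $|F| = n-1$. A relative interior point $x$ of this edge has $x_k = 1$ on $F$ and $x_i, x_j \in (0,1)$. Suppose for contradiction that $V_P = \mu_x$ for some origin-symmetric polytope $P$. On the $(n-1)$-dimensional subspace $\xi = \mathrm{span}\{u_k : k \in F\}$, we would have $V_P(\xi \cap \mathbb{S}^{n-1}) = n-1 = \frac{\dim \xi}{n} V_n(P)$, so the subspace concentration inequality is tight. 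Lemma \ref{Henk}, via the equality clause of Definition \ref{scc}, then produces a complementary line $\xi'$ with $\mathrm{supp}\,V_P \subseteq \xi \cup \xi'$. Since $B_1$ and $B_2$ are bases of the matroid, neither $u_i$ nor $u_j$ lies in $\xi$, so both must lie in the single line $\xi'$, contradicting the standing assumption that $u_1,\dots,u_N$ are pairwise non-parallel.

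For part (iii), general position forces every proper subset of cardinality at most $n-1$ to be a flat of $M(u_1,\dots,u_N)$, while the only larger flat is the entire index set; hence Theorem \ref{sturmf} identifies $P_{M(u_1,\dots,u_N)}$ with the hypersimplex $\bigtriangleup_N^n$, and its relative boundary with the set of $x \in \bigtriangleup_N^n$ having some coordinate in $\{0,1\}$. Sufficiency at a vertex is part (i); when every $x_i < 1$, general position bounds $|\{i : u_i \in \xi\}| \le \dim \xi$ for every proper subspace $\xi$, whence $\mu_x(\xi \cap \mathbb{S}^{n-1}) < \dim \xi$ strictly, the subspace concentration condition is vacuously satisfied, and Lemma \ref{jams} produces the desired $P$. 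For necessity, suppose $x$ is not a vertex yet $x_{i_0} = 1$; specializing the argument of part (ii) to the line $\xi = \mathrm{span}\{u_{i_0}\}$ shows $V_P(\xi \cap \mathbb{S}^{n-1}) = 1$ is tight, so Lemma \ref{Henk} supplies a complementary $(n-1)$-dimensional $\xi'$ containing every other supporting $u_j$. General position caps the number of such $u_j$ at $n-1$, so $\sum_{j \ne i_0} x_j = n-1$ is a sum of at most $n-1$ terms in $(0,1]$, which forces every positive $x_j$ to equal $1$ and $x$ to be a vertex, contradicting the hypothesis. The most delicate step throughout is the translation between equality in a matroid facet inequality and tight subspace concentration; once this dictionary is established, general position furnishes the cardinality bounds that drive the conclusions.
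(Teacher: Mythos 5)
Your proof follows essentially the same route as the paper's: part (i) realizes each vertex (indicator of a linear basis) by an origin-symmetric parallelotope whose cone-volumes are uniform; part (ii) uses the tight subspace concentration inequality on the $(n-1)$-dimensional span of the constant-$1$ coordinates plus non-parallelism of $u_i,u_j$ to contradict the SCC (the paper phrases this via Lemma \ref{jams}, you via Lemma \ref{Henk}, which is a logically equivalent shortcut); and part (iii) uses general position to make the SCI strict when all $x_i<1$ and, for necessity, forces the complementary $(n-1)$-dimensional subspace to absorb the remaining support. The argument is correct and aligns with the paper's, with only cosmetic differences (explicit segment-sum construction in (i), explicit hypersimplex identification in (iii)).
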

\begin{proof}
	$\mathrm{(i)}$ Suppose $x$ is a vertex of $P_{M(u_1,\dots,u_N)}$. By Theorem \ref{vertxedge}, there exist $i_1,\dots,i_n\in\{1,\dots,N\}$ such that  $x=\sum_{j=1}^{n}e_{i_j}$. From the definition of matroid polytope, it follows that $\{i_1,\dots,i_n\}$ is a basis of $M(u_1,\dots,u_N)$, and therefore $\{u_{i_1},\dots,u_{i_n}\}$ is a basis of $\mR$. So, there is an origin-symmetric parallelotope $P$ with volume $n$ and unit normal vector set $\{\pm u_{i_1},\dots,\pm u_{i_n}\}.$  Hence, $\mu_x=\frac{1}{2}\sum_{j=1}^{n}(\delta_{u_{i_j}}+\delta_{-u_{i_j}})=V_P$.
	
	$\mathrm{(ii)}$ Suppose $x$ is a point in the relative interior of an  edge of $P_{M(u_1,\dots,u_N)}$. By Theorem \ref{vertxedge},
	 $x$ has $(n-1)$ coordinates $1$,  one coordinate $\lambda$ and one coordinate $(1-\lambda)$, where $\lambda\in(0,1)$. W.l.o.g. assume $x=(\underbrace{1,\dots,1}_{n-1},\lambda,1-\lambda,0,\dots,0)$. Then
\vskip-10pt
	\[\mu_x=\frac{\lambda}{2}(\delta_{u_n}+\delta_{-u_n})+\frac{1-\lambda}{2}(\delta_{u_{n+1}}+\delta_{-u_{n+1}})+\frac{1}{2}\sum_{j=1}^{n-1}(\delta_{u_j}+\delta_{-u_j}).\]
	So,
$\frac{\mu_x(\mathrm{span}\{u_1,\dots,u_{n-1}\}\cap\mS)}{\mu_x(\mS)}=\frac{n-1}{n}.$
Since $u_{n}$ is not parallel to $u_{n+1}$, it follows that  $\{\pm u_n,\pm u_{n+1}\}$ does not lie in a $1$-dimensional subspace, and therefore $\mu_x$ does not satisfy the subspace concentration condition. By Theorem \ref{jams}, there does not exist an origin-symmetric  polytope $P$ such that $\mu_x=V_P$.

	$\mathrm{(iii)}$ We first prove the sufficiency. According to $\mathrm{(i)}$, it suffices to prove the case that $x\cdot e_i\in[0,1)$, $i=1,\dots,N$. Since $u_1,\dots,u_N$ are in general position, it follows that for each proper subspace $\xi$ of $\mR$, $\xi$ contains at most $\dim\xi$ vectors among
	$u_1,\dots,u_N$. So,
	$$\frac{\mu_x(\xi\cap\mS)}{\mu_x(\mS)}=\frac{1}{n}\sum_{\{i:u_i\in\xi\}}x_i<\frac{\dim\xi}{n}.$$
	Hence, $\mu_x$ satisfies the subspace concentration condition. By Theorem \ref{jams},  there  exists an origin-symmetric  polytope $P$ such that $\mu_x=V_P$
	
	Second, we prove the necessity. Suppose that  there  exists an origin-symmetric  polytope $P$ such that $\mu_x=V_P$. It suffices to prove that if there exists an $i\in\{1,\dots,N\}$ such that $x_i=1$, then $x$ is a vertex of $P_{M(u_1,\dots,u_N)}$. Assume $x_i=1$. Then
	\[\frac{\mu_x(\pm u_i)}{\mu_x(\mS)}=\frac{1}{n}.\]
	Since $\mu_x$ satisfies the subspace concentration condition, it follows that there exists a $(n-1)$-dimensional subspace $\xi'$
	complementary to $\mathrm{span}\{u_i\}$ so that $\mathrm{supp}\mu_x\setminus\{\pm u_i\}\subseteq \xi'$. Since $u_1,\dots,u_N$ are in general position, it follows that $\xi'$ contains at most $(n-1)$ vectors among
	$u_1,\dots,u_N$. Thus, $\mathrm{supp}\mu_x$ contains precisely $n$ vectors among $u_1,\dots,u_N$. Therefore, $x$ is a vertex of $P_{M(u_1,\dots,u_N)}$ by the definition of $\mu_x$.
\end{proof}


In the following Theorem \ref{poly}, we  determine the dimension of $P_{M(u_1,\dots,u_N)}$. By the Feichtner-Sturmfels theorem \ref{sturmf}, it suffices to determine $c(M(u_1,\dots,u_N))$, i.e., the number of connected components of $M(u_1,\dots,u_N)$. Let $P$ be any origin-symmetric polytope such that $\mathrm{supp}S_P=\{\pm u_1,\dots,\pm u_N\}$ and $V_n(P)=n$. Suppose $P= P_1\oplus\cdots\oplus P_l$, $\dim P_i>0$ and $P_i$ is directly indecomposable, $i=1,\dots,l$.

\begin{theorem}\label{poly}
	$\dim P_{M(u_1,\dots,u_N)}=N-l$.
\end{theorem}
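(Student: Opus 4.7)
The plan is to invoke the Feichtner--Sturmfels dimension formula (Theorem~\ref{sturmf}), which reduces the task to computing $c(M(u_1,\dots,u_N))$, and then to prove that this count equals $l$, the number of directly indecomposable summands of $P$.

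First I would translate the decomposition $P = P_1 \oplus \cdots \oplus P_l$ into the linear data carried by the facet normals. Let $\zeta_j$ denote the linear subspace containing (a translate of) $P_j$, so that $\mR = \zeta_1 \oplus \cdots \oplus \zeta_l$. Setting $\eta_j = \bigl(\bigoplus_{k\ne j}\zeta_k\bigr)^\perp$, these subspaces are mutually complementary with $\mR = \eta_1 \oplus \cdots \oplus \eta_l$ and $\dim\eta_j = \dim\zeta_j$. Every facet of $P$ has the form $P_1 + \cdots + P_{j-1} + F_j + P_{j+1} + \cdots + P_l$ for some facet $F_j$ of $P_j$, and its outer unit normal is perpendicular to each $\zeta_k$ with $k\ne j$, hence lies in $\eta_j$. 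This yields a partition $\{1,\dots,N\} = I_1 \sqcup \cdots \sqcup I_l$ with $I_j = \{i : u_i \in \eta_j\}$; moreover, since $P_j$ is full-dimensional in $\zeta_j$ with the origin in its relative interior, the family $\{u_i : i \in I_j\}$ spans $\eta_j$.

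Next I would show this partition coincides with the connected component partition of $M(u_1,\dots,u_N)$. For the \emph{separation} half, suppose a circuit $C$ met both $I_a$ and $I_b$ with $a\ne b$: using $\mR = \eta_1 \oplus \cdots \oplus \eta_l$, project the associated minimal linear dependence onto each $\eta_j$; since the $u_i$ for $i\in C$ populate at least two of the $\eta_j$'s, each nonzero projection is a strictly shorter nontrivial dependence, contradicting the minimality of $C$. Thus no circuit crosses an $I_a$/$I_b$ boundary. For the \emph{within-component} half, assume $M\!\mid_{I_j}$ were disconnected: then $I_j$ splits into nonempty pieces $J'$ and $J''$ whose normal spans are complementary subspaces of $\eta_j$. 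The standard cylinder characterization (Schneider \cite[p.~156]{Schneider2}) would then convert this normal-vector splitting into a nontrivial decomposition $P_j = Q'\oplus Q''$, contradicting the indecomposability of $P_j$. Combining both halves, $I_1,\dots,I_l$ are precisely the connected components, whence $c\bigl(M(u_1,\dots,u_N)\bigr) = l$ and Theorem~\ref{sturmf} delivers $\dim P_{M(u_1,\dots,u_N)} = N - l$.

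The main obstacle is the within-component connectedness step: it rests on the equivalence between direct-sum indecomposability of a polytope and connectedness of the linear matroid of its outer facet normals. This equivalence, essentially Schneider's cylinder characterization, must be invoked carefully; the rest of the argument reduces to projection and dimension bookkeeping.
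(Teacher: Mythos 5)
Your proposal is correct, but it takes a genuinely different route at the decisive step. Both arguments reduce the claim to computing $c(M(u_1,\dots,u_N))$ via Feichtner--Sturmfels, and both use the same circuit-projection argument to rule out circuits crossing between different $I_j$; where you diverge is in proving that each $I_j$ is connected. The paper handles this by first treating the $l=1$ case separately: there it invokes Theorem~\ref{relint} together with the \emph{strict} subspace concentration inequality of Henk--Linke (valid because $P$ is not a cylinder) to show the cone-volume vector admits an $(N-1)$-dimensional $\varepsilon$-ball inside $P_M \cap H_{\mathrm{I}_N,n}$, hence $\dim P_M=N-1$; it then cites this $l=1$ case inside Case~2 for each summand $P_j$. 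You instead bypass cone-volume measures entirely and argue matroid-theoretically: if $M\mid_{I_j}$ were disconnected, the normals of $P_j$ would split along complementary subspaces, and Schneider's cylinder characterization would force $P_j$ to decompose. This is cleaner and more self-contained as linear algebra, but it shifts the burden onto the geometric fact that a complementary splitting of outer facet normals implies direct-sum decomposability -- a fact the paper cites Schneider \cite[p.~156]{Schneider2} for only as a definition, so you should locate a precise reference for that implication. One small imprecision to patch: you apply the cylinder criterion to the $u_i$ lying in $\eta_j$, but the outer facet normals of $P_j$ \emph{as a polytope in} $\zeta_j$ are positive multiples of $\mathrm{proj}_{\zeta_j}(u_i)$, not the $u_i$ themselves. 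This is harmless because $\mathrm{proj}_{\zeta_j}\mid_{\eta_j}$ is a linear isomorphism onto $\zeta_j$ preserving the matroid and carrying a complementary splitting of $\eta_j$ to one of $\zeta_j$; alternatively (as the paper does) one can first apply a $\mathrm{GL}(n)$ coordinate change -- which leaves the matroid unchanged -- making the $\zeta_j$ pairwise orthogonal, so that $\eta_j=\zeta_j$ and the identification is literal.
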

\begin{proof}
   We complete the proof by dividing two cases: $l=1$ and $l>1$.

   \textbf{Case 1.} Assume $l=1$. We aim to show that $ P_{M(u_1,\dots,u_N)}$ contains an $(N-1)$-dimensional ball. If so, then $\dim P_{M(u_1,\dots,u_N)}=N-1$ by the fact that $P_{M(u_1,\dots,u_N)}\subseteq H_{ \mathrm{I}_N,n}$.

	Indeed, by Theorem \ref{relint} we have that $x=\big((V_P(\{\pm u_1\}),\dots,V_P(\{\pm u_N\}))\in\mathrm{relint} P_{M(u_1,\dots,u_N)}$. In the following, we prove that there exists $\varepsilon>0$ such that
	$$(x+\varepsilon B)\cap H_{ \mathrm{I}_N,n}\subseteq P_{M(u_1,\dots,u_N)},$$
	where $B$ is the unit ball in $\mathbb{R}^N$.
	
	By Lemma \ref{Henk} and the assumption that $l=1$ (i.e., $P$ is not a cylinder),  it follows that
	\[ \frac{V_P(\xi\cap\mS)}{V_n(P)}<\frac{\dim \xi}{n},\quad \text{for each proper subspace}\ \xi .\]
	Let
	$$
	\varepsilon=\frac{1}{N}\min\{\dim \xi-V_P(\xi\cap\mS):\xi\in\{\xi_1,\dots,\xi_m\}\text{ is a proper subspace}\},$$
 where $\{\xi_1,\dots,\xi_m\}$ is defined by (\ref{defxi}).
	Moreover, assume $\varepsilon<V_P(\{\pm u_i\})$, $i=1,\dots,N$.
	
	For each $y\in(x+\varepsilon B)\cap H_{ \mathrm{I}_N,n}$, on one hand we have
	\[y\cdot e_i\ge x\cdot e_i-\varepsilon=V_P(\{\pm u_i\})-\varepsilon>0,\quad i=1,\dots,N.\]
Meanwhile, since $|v_j|\le N$, it follows that
	\[y\cdot v_j\leq (V_P(\{\pm u_1\}),\dots,V_P(\{\pm u_N\}))\cdot v_j+N\varepsilon=V_P(\xi_j\cap\mS)+N\varepsilon\le\dim \xi_j,\quad j=1,\dots,m.\]
	From formula (\ref{prez}), it follows that $y\in P_{M(u_1,\dots,u_N)}$, and therefore $(x+\varepsilon B)\cap H_{ \mathrm{I}_N,n}\subseteq P_{M(u_1,\dots,u_N)}.$
	
	\textbf{Case 2.} Assume $l>1$. We aim to show that $c(M(u_1,\dots,u_N))=l$.
	
	By the definition of $M(u_1,\dots,u_N)$, it follows that $M(\phi u_1,\dots,\phi u_N)=M(u_1,\dots,u_N)$ for $\phi\in \mathrm{GL}(n)$. W.l.o.g., assume $P_j\subseteq\xi_j$, $j=1,\dots,l$, where $\xi_1,\dots,\xi_l$ are pairwise orthogonal subspaces of $\mR$.
	
	Let $F_j=\{i:u_i\in\xi_j\}$, $j=1,\dots,l.$ In the following, we prove that all the connected components of  $M(u_1,\dots,u_N)$ are precisely $F_1,\dots,F_l$, and therefore $c(M(u_1,\dots,u_N))=l$.

	First, we prove that $F_j$ is connected, for $j\in\{1,\dots,l\}$. In fact, the  unit outer normal vector set of $P_j$ in the subspace $\xi_j$ is $\{u_i,-u_i:i\in F_j\}$. By the fact that $P_j$ is directly indecomposable and Case 1,
	it follows that $F_j$ is connected.
	
	Second, we prove that for distinct $j_1,j_2\in\{1,\dots,l\}$, $i_1$ is not equivalent to $i_2$, for $i_1\in F_{j_1}$ and $i_2\in F_{j_2}.$
	Otherwise, assume there exist $ i_1\in F_{j_1}$ and $i_2\in F_{j_2}$ such that $i_1$ is equivalent to $i_2$. By the definition of equivalence, there exists a circuit $C$ so that $\{i_1,i_2\}\subseteq C$. Since a circuit is a minimal dependent set, it follows that $C\setminus F_{j_1}$ and $C\cap F_{j_1}$ are both independent sets. Since $\mathrm{span}\{u_i:i\in C\setminus F_{j_1}\}$ is orthogonal to $\xi_{j_1}$, it follows that $C=(C\setminus F_{j_1})\cup (C\cap F_{j_1})$ is an independent set, which contradicts to that $C$ is a dependent set.

 Consequently,  $c(M(u_1,\dots,u_N))=l$.
\end{proof}

\vskip10pt
\section{\bf Proofs of Theorems  1.1 and 1.2}\label{sec5}
\vskip5pt

Now, with the aid of Theorem \ref{vertxedge}, we prove our main results. We first prove Theorem \ref{thm1.2}.
\begin{theorem}\label{fX_3}
	Let $P$ be a polytope in $\mathbb{R}^n$ with its centroid at the origin and $n\geq5$. Then
	$$\frac{X_3(P)}{V_n(P)}\leq\sqrt[n]{\binom{n}{3}\big((\frac{3}{n})^n-3(\frac{2}{n})^n+\frac{3}{n^n}\big)}$$
	with equality if and only if $P$ is a parallelotope.
\end{theorem}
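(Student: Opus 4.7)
The plan is to reduce the question to a polynomial optimization on the matroid polytope $P_M := P_{M(u_1, \dots, u_N)}$ studied in Section \ref{sec4} and to show the polynomial attains its maximum at a vertex. By Lemma \ref{injective} I may assume $P$ is origin-symmetric with $\mathrm{supp}\,S_P = \{\pm u_1, \dots, \pm u_N\}$, and after scaling take $V_n(P) = n$. Setting $x_i = V_P(\{\pm u_i\})$, Theorem \ref{rel=cal} places $x = (x_1, \dots, x_N)$ in $P_M$, and Example \ref{X_3} expresses $X_3(P)^n$ as an explicit polynomial $F(x)$. The goal is therefore $F \le \binom{n}{3}(3^n - 3 \cdot 2^n + 3)$ on $P_M$, with equality only at vertices. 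Using that $\mathcal{P}_n^n$ is dense in $\mathcal{P}_c^n$ (and that both $X_3$ and $V_n$ depend continuously on the polytope), I further restrict to the $n$-general position case, where each $3$-dimensional subspace spanned by normals contains exactly $3$ of them; then $F$ collapses to
\[F(x) = \sum_{\{a,b,c\}\subseteq\{1,\dots,N\}} G(x_a, x_b, x_c),\qquad G(u,v,w) = \sum_{S\subseteq\{u,v,w\}} (-1)^{3-|S|}\bigl(\textstyle\sum_{s\in S} s\bigr)^n,\]
where $G$ counts weighted surjective $n$-tuples onto the triple.

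By Theorem \ref{vertxedge}, every vertex of $P_M$ is an indicator vector $e_B$ of a basis $B \subseteq \{u_1, \dots, u_N\}$ of $\mathbb{R}^n$, and every edge is parallel to some $e_i - e_j$. I would therefore aim to prove that $F$ is convex along every segment in $P_M$ parallel to a direction $e_i - e_j$; equivalently, $(\partial_{x_i} - \partial_{x_j})^2 F \ge 0$ on $P_M$ for every $i \ne j$. Granted this, a standard face-induction pushes any interior maximizer to a vertex: if a maximizer $x^*$ lies in the relative interior of a face $G$ with $\dim G \ge 1$, the 1-D convexity along an edge-parallel chord through $x^*$ forces $F$ to be constant on that chord, so we may slide $x^*$ to a lower-dimensional face and iterate until reaching a vertex.

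The main obstacle is the edge-convexity itself. A term-by-term computation gives $(\partial_{x_i}-\partial_{x_j})^2 G(x_a,x_b,x_c) \ge 0$ for the $2\binom{N-2}{2}$ triples that meet $\{i,j\}$ in exactly one element, in each case reducing to a mixed second difference of the convex map $t \mapsto t^{n-2}$; but the $N-2$ triples $\{i,j,c\}$ containing \emph{both} $i$ and $j$ contribute $-n(n-1)\bigl[(x_i+x_c)^{n-2} - x_i^{n-2} + (x_j+x_c)^{n-2} - x_j^{n-2}\bigr] \le 0$. What must be shown is that the positive contributions uniformly dominate the negative ones on $P_M$. The hypothesis $n \ge 5$ enters critically here, both ensuring sufficient curvature of $t \mapsto t^{n-2}$ and guaranteeing $N \ge n \ge 5$ positive triples relative to the few negative ones. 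I expect to reduce this to a Jensen-type inequality via the integral representation $\Delta_{y_b}\Delta_{y_c} h(x) = \int_0^{y_b}\int_0^{y_c} h''(x+s+t)\,ds\,dt$, exploiting the subspace-concentration constraints $\sum_l x_l = n$ and $0 \le x_l \le 1$ encoded in $P_M$.

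Once edge-convexity is established, evaluating $F$ at vertices completes the proof. At $e_B$ with $B = \{i_1, \dots, i_n\}$, only the $\binom{n}{3}$ triples contained in $B$ contribute, each giving $G(1,1,1) = 3^n - 3 \cdot 2^n + 3$. Hence $F(e_B) = \binom{n}{3}(3^n - 3 \cdot 2^n + 3)$ independently of $B$, and dividing by $V_n(P)^n = n^n$ yields the claimed bound. Equality forces $x$ to be a vertex of $P_M$, so $V_P$ is supported on $n$ linearly independent normals with equal mass on each pair $\{\pm u_{i_k}\}$; together with the centroid hypothesis, this characterizes the origin-symmetric parallelotope.
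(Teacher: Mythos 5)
Your high-level strategy matches the paper's: express $X_3(P)^n$ as the polynomial from Example \ref{X_3} on the matroid polytope $P_{M(u_1,\dots,u_N)}$, show the maximum occurs only at vertices using the edge structure from Theorem \ref{vertxedge}, and evaluate at a vertex to get $\binom{n}{3}(3^n - 3\cdot 2^n + 3)$. However, the central step --- edge-convexity of $F$, i.e.\ $(\partial_{x_i}-\partial_{x_j})^2 F > 0$ away from vertices --- is exactly what your proposal does not prove. You correctly split the triples into those contributing nonnegatively (mixed second differences of $t\mapsto t^{n-2}$) and the $N-2$ triples $\{i,j,c\}$ contributing $-n(n-1)\bigl[(x_i+x_c)^{n-2}-x_i^{n-2}+(x_j+x_c)^{n-2}-x_j^{n-2}\bigr]\le 0$, but the dominance of the former over the latter is left as ``what must be shown'' and ``I expect to reduce this to a Jensen-type inequality.'' That dominance is the entire technical content of the paper's Steps 1.1--1.4: the authors group the positive and negative contributions into quantities $A$ and $B$, introduce auxiliary variables $y_l = \sum_{\{j: u_j\in\xi_l^2\setminus\{u_1\}\}}x_j$, pass to auxiliary functions $h_\alpha$ via the mean value theorem, and use the matroid-polytope constraints (notably $y_l\le 2$ and $\sum_l y_l = n-x_1$, which come from \eqref{prez} and \eqref{sumxi}) to force $h_\alpha(0)>0$ for $\alpha\ge 4$ and $h_3(t)>0$ on $(0,x_1)$, hence $A-B>0$. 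This is where $n\ge 5$ enters, and none of it appears in your sketch.

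Two further remarks on the density reduction to $n$-general position. First, it is unnecessary: the paper handles arbitrary configurations directly, and the bookkeeping of the non-general-position case is what the sums over $\{l:\xi_l^2\subseteq\xi_i^3\}$ are doing. Second, and more importantly, it actively breaks your treatment of equality. A limiting argument only yields the nonstrict inequality; to conclude ``equality forces $x$ to be a vertex of $P_M$'' you need strict inequality on $\mathrm{relint}\,P_{M(u_1,\dots,u_N)}$ \emph{for the polytope $P$ itself}, whose $P_M$ and polynomial $F$ are not those of the approximating general-position polytope. So after the density reduction you no longer have the strict-convexity statement for the $P$ you care about, and the equality characterization collapses. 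The paper avoids this by proving strict edge-convexity on $P_M$ in full generality and then combining it with Theorem \ref{rel=cal}, which places $(V_P(\{\pm u_1\}),\dots,V_P(\{\pm u_N\}))$ in the relative interior.
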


\begin{proof}
	Assume that $\mathrm{supp}S_P\cup\mathrm{supp}S_{-P}=\{\pm u_1,\pm u_2,\dots,\pm u_N\}$ and $V_n(P)=n.$
	
	If $P$ is a  parallelotope, by Example \ref{X_3}, it follows that
	\[\frac{X_3(P)^n}{V_n(P)^n}=\binom{n}{3}\big((\frac{3}{n})^n-3\big((\frac{2}{n})^n-2\frac{1}{n^n}\big)-3\frac{1}{n^n}\big)=\binom{n}{3}\big((\frac{3}{n})^n-3(\frac{2}{n})^n+\frac{3}{n^n}\big).\]
	In the following, we aim to show that
	\[\frac{X_3(P)^n}{V_n(P)^n}<\binom{n}{3}\big((\frac{3}{n})^n-3(\frac{2}{n})^n+\frac{3}{n^n}\big),\]
	as long as  $P$ is \emph{not} a parallelotope.
	
	Let
	$$\{\xi_1^2,\xi_2^2,\dots,\xi_{m_2}^2\}=\{\mathrm{span}\{u_{i_1},\dots,u_{i_n}\}:i_1,\dots,i_n\in\{1,2,\dots,N\},\dim(\mathrm{span}\{u_{i_1},\dots,u_{i_n}\})=2\};$$
	$$\{\xi_1^3,\xi_2^3,\dots,\xi_{m_3}^3\}=\{\mathrm{span}\{u_{i_1},\dots,u_{i_n}\}:i_1,\dots,i_n\in\{1,2,\dots,N\},\dim(\mathrm{span}\{u_{i_1},\dots,u_{i_n}\})=3\}.$$
	That is, $\{\xi_1^2,\xi_2^2,\dots,\xi_{m_2}^2\}$ consists of \emph{all} the $2$-dimensional subspaces spanned
	by $n$  outer  normals of $P$; $\{\xi_1^3,\xi_2^3,\dots,\xi_{m_3}^3\}$ consists of \emph{all} the $3$-dimensional subspaces spanned
	by $n$  outer  normals of $P$.
	
	Let $a=(a_1,\dots,a_N)=(V_P(\{\pm u_1\}),\dots,V_P(\{\pm u_N\}))$. From   Example \ref{X_3},
	it follows that \[X_3(P)^n=\sum\limits_{i=1}^{m_3}\Big( (\sum\limits_{\{j:u_j\in \xi_i^3\}}a_j)^n-\sum\limits_{\{l:\xi_l^2\subseteq \xi_i^3\}}\big( (\sum\limits_{\{j:u_j\in \xi_l^2\}}a_j)^n-\sum\limits_{\{j:u_j\in \xi_l^2\}} a_j^n\big) -\sum\limits_{\{j:u_j\in \xi_i^3\}}a_j^n\Big).\]
	
	In the rest, we investigate the maximum of the functional
	\[g(x_1,\dots,x_N)=\sum\limits_{i=1}^{m_3}\Big( (\sum\limits_{\{j:u_j\in \xi_i^3\}}x_j)^n-
	\sum\limits_{\{l:\xi_l^2\subseteq \xi_i^3\}}\big((\sum\limits_{\{j:u_j\in \xi_l^2\}}x_j)^n-
	\sum\limits_{\{j:u_j\in \xi_l^2\}} x_j^n\big) -\sum\limits_{\{j:u_j\in \xi_i^3\}}x_j^n\Big)\]
	in the matroid polytope $P_{M(u_1,\dots,u_N)}$. Obviously that $g(a)=X_3(P)^n$.
	
	Since $g$ is a polynomial and $P_{M(u_1,\dots,u_N)}$ is compact, it follows that $g$ attains its maximum on $P_{M(u_1,\dots,u_N)}$.
	What follows aims to show that $g$ attains its maximum precisely at  \emph{vertices} of the matroid polytope $P_{M(u_1,\dots,u_N)}$. We show this by contradiction.
	
	\textbf{Step 1.} Assume that $g$ attains its maximum at $x\in P_{M(u_1,\dots,u_N)}\setminus\mathcal{F}_0(P_{M(u_1,\dots,u_N)})$.
	By Theorem \ref{vertxedge}, there exist distinct $j_1,j_2\in\{1,\dots,N\}$ and sufficiently small
	$\varepsilon>0$ so that
	\[x+\varepsilon[e_{j_2}-e_{j_1},e_{j_1}-e_{j_2}]\subseteq P_{M(u_1,\dots,u_N)}.\]
	W.l.o.g., assume $j_1=1,\ j_2=2$.
	
	Let $G(t)=g(x+t(e_1-e_2))$, $t\in[-\varepsilon,\varepsilon]$.
	By the assumption that $g$  attains its maximum at the point $x$, it implies that $G(t)$ attains its maximum at $t=0$.
	Hence, it is necessary that
	$$G'(0)=0,\ \ \ \ \text{and}\ \ \ \ G''(0)=(\frac{\partial^2g}{\partial x_1^2}+\frac{\partial^2g}{\partial x_2^2}-2\frac{\partial^2g}{\partial x_1\partial x_2})(x)\le0.$$
	
	However, in the following we show that $\frac{\partial^2g}{\partial x_{1}^2}-\frac{\partial^2g}{\partial x_{1}\partial x_{2}}>0$ (Similarly, that $\frac{\partial^2g}{\partial x_{2}^2}-\frac{\partial^2g}{\partial x_{1}\partial x_{2}}>0$.), which is a contradiction. To prove it, we divide it into four sub-steps.
	
	\textbf{Step 1.1.} To calculate and simplify $\frac{\partial^2g}{\partial x_{1}^2}-\frac{\partial^2g}{\partial x_{1}\partial x_{2}}$.
	By calculating directly, we obtain
	\begin{align*}
		&\frac{\partial g}{\partial x_{1}}=n\sum\limits_{\{i:u_1\in\xi_i^3\}}\Big((\sum_{\{j:u_j\in\xi_i^3\}}x_j)^{n-1}-\sum\limits_{\{l:u_1\in\xi_l^2\subseteq \xi_i^3\}}\big((\sum_{\{j:u_j\in\xi_l^2\}}x_j)^{n-1}-x_1^{n-1}\big) -x_1^{n-1}\Big),\\
		&\frac{\partial^2g}{\partial x_{1}^2}=n(n-1)\sum\limits_{\{i:u_1\in\xi_i^3\}}\Big((\sum_{\{j:u_j\in\xi_i^3\}}x_j)^{n-2}-\sum\limits_{\{l:u_1\in\xi_l^2\subseteq \xi_i^3\}}\big( (\sum_{\{j:u_j\in\xi_l^2\}}x_j)^{n-2}-x_1^{n-2}\big) -x_1^{n-2}\Big),\\
		&\frac{\partial^2g}{\partial x_{1}\partial x_2}=n(n-1)\sum\limits_{\{i:u_1,u_2\in\xi_i^3\}}\big( (\sum_{\{j:u_j\in\xi_i^3\}}x_j)^{n-2}-(\sum_{\{j:u_j\in\xi_{u_1,u_2}\}}x_j)^{n-2}\big),
	\end{align*}
	where $\xi_{u_1,u_2}=\mathrm{span}\{u_1,u_2\}\in\{\xi_1^2,\xi_2^2,\dots,\xi_{m_2}^2\}$. Hence,
	\begin{align*}
		&\frac{1}{n(n-1)}(\frac{\partial^2g}{\partial x_{1}^2}-\frac{\partial^2g}{\partial x_{1}\partial x_{2}})\\
		=&\sum\limits_{\{i:u_{1}\in\xi^3_i,u_{2}\notin\xi^3_i\}}\Big( (\sum_{\{j:u_j\in\xi_i^3\}}x_j)^{n-2}-\sum\limits_{\{l:u_1\in\xi_l^2\subseteq \xi_i^3\}}\big((\sum_{\{j:u_j\in\xi_l^2\}}x_j)^{n-2}-x_1^{n-2}\big) -x_1^{n-2}\Big)\\
		&-\sum\limits_{\{i:u_{1},u_{2}\in\xi^3_i\}} \sum\limits_{\{l:u_{1}\in\xi^2_l\subseteq\xi^3_i,u_2\notin\xi_l^2\}}\big( (\sum_{\{j:u_j\in\xi_l^2\}}x_j)^{n-2}-x_{1}^{n-2}\big)\\
		\triangleq&A-B.
	\end{align*}	
	
	To further calculate A and B, w.l.o.g., assume~$\{\xi_i^2:u_{1}\in\xi^2_i\}=\{\xi^2_{1},\xi^2_2,\dots,\xi^2_{k}\}$, $k\leq m_2$, and $\xi_1^2=\xi_{u_1,u_2}$. Two observations are in order.
	
	First, $u_1,u_2\in\mathrm{span}\{ u_{2},\xi_{l}^2\}\in\{\xi_1^3,\xi_2^3,\dots,\xi_{m_3}^3\}$, $l=2,3,\dots,k$.
	
	Second, for distinct $i_1,i_2\in\{i:u_{1},u_{2}\in\xi^3_i\}$, $\xi_{i_1}^3\cap\xi_{i_2}^3=\xi_{u_1,u_2}$.
	
	Combining the above two observations, it follows that
	\begin{equation}\label{2sum}
		\bigcup_{i\in \{i:u_{1},u_{2}\in\xi^3_i\}}\{l:u_{1}\in\xi^2_l\subseteq\xi^3_i,~\text{and} ~u_2\notin\xi_l^2\}
	\end{equation}
	is indeed a \emph{disjoint} union of  $\{2,3,\dots,k\}$. Hence,
	\[B=\sum\limits_{\{i:u_{1},u_{2}\in\xi^3_i\}} \sum\limits_{\{l:u_{1}\in\xi^2_l\subseteq\xi^3_i,u_2\notin\xi_l^2\}}\big( (\sum_{\{j:u_j\in\xi_l^2\}}x_j)^{n-2}-x_{1}^{n-2}\big)=\sum\limits_{l=2}^k\big( (\sum_{\{j:u_j\in\xi_l^2\}}x_j)^{n-2}-x_{1}^{n-2}\big).\]
	
	Let
	\[y_l=\sum_{\{j:u_j\in\xi_l^2\setminus\{u_1\}\}}x_j=-x_1+\sum_{\{j:u_j\in\xi_l^2\}}x_j,\quad l=1,2,\dots,k.\]	
	Then $y_l\le\sum_{\{j:u_j\in\xi_l^2\}}x_j\le2$, $B=\sum\limits_{l=2}^k\big( (x_{1}+y_l)^{n-2}-x_{1}^{n-2}\big)$ and
	\[A=\sum\limits_{\{i:u_{1}\in\xi^3_i,u_{2}\notin\xi^3_i\}}\Big((\sum_{\{j:u_j\in\xi_i^3\}}x_j)^{n-2}-\sum\limits_{\{l:u_1\in\xi_l^2\subseteq \xi_i^3\}}\big( (x_1+y_l)^{n-2}-x_1^{n-2}\big) -x_1^{n-2}\Big).\]
	
	Observe that for each $i\in\{i:u_1\in\xi_i^3,u_{2}\notin\xi_i^3\},$
	\[\bigcup_{l\in\{l:\xi_{l}^2\subseteq\xi^3_i\}}\{j:u_j\in\xi_l^2\setminus\{u_1\}\}\]
	is indeed a \emph{disjoint} union of $\{j\in\{2,3,\dots,N\}:u_j\in\xi_i^3\setminus\{u_1\}\}$. We obtain that
	\begin{equation}\label{3sum}
		\sum_{\{j:u_j\in\xi_i^3\}}x_j=x_1+\sum_{\{j:u_j\in\xi_i^3\setminus\{u_1\}\}}x_j=x_1+\sum\limits_{\{l:u_1\in\xi_{l}^2\subseteq\xi^3_i\}}y_l,\quad \forall i\in \{i:u_{1}\in\xi^3_i,u_{2}\notin\xi_i^3\}.
	\end{equation}
	Therefore,
	\[A=\sum\limits_{\{i:u_{1}\in\xi^3_i,u_{2}\notin\xi^3_i\}} \Big((x_{1}+\sum\limits_{\{l:u_1\in\xi_{l}^2\subseteq\xi^3_i\}}y_l)^{n-2} - \sum\limits_{\{l:u_1\in\xi_{l}^2\subseteq\xi^3_{i}\}}\big((x_{1}+y_l)^{n-2}- x_{1}^{n-2}\big)-x_{1}^{n-2}\Big).\]

	\textbf{Step 1.2.} Varying $x_1$ in $A-B$, let
	\begin{align*}
		h_\alpha(t)=&\sum\limits_{\{i:u_{1}\in\xi^3_i,u_{2}\notin\xi^3_i\}} \Big((t+\sum\limits_{\{l:u_1\in\xi_{l}^2\subseteq\xi^3_i\}}y_l)^{\alpha} -\sum\limits_{\{l:u_1\in\xi_{l}^2\subseteq\xi^3_{i}\}}\big( (t+y_l)^{\alpha}- t^{\alpha}\big)-t^{\alpha}\Big)\\
		&-\sum\limits_{l=2}^k\big( (t+y_l)^{\alpha}-t^{\alpha}\big),\quad\quad\quad t\in[0, \infty),\quad \alpha=3,4,\dots,n-2.
	\end{align*}
	Thus, $A-B=h_{n-2}(x_1)$, and
	\[\frac{d^{n-2-\alpha }}{(dt)^{n-2-\alpha}}h_{n-2}=\frac{(n-2)!}{\alpha !}h_{\alpha},\quad\alpha=3,4,\dots,n-3.\]
	By the mean value theorem, there exists an~$\eta\in (0,1)$ satisfying
	\begin{align*}
		h_{n-2}(x_{1})=&h_{n-2}(0)+(n-2)h_{n-3}(0)x_{1}+(n-2)(n-3)h_{n-4}(0)\frac{x_{1}^2}{2!}+\cdots\\
		&+\binom{n-2}{n-6}h_{4}(0)x_{1}^{n-6}+\binom{n-2}{n-5}h_{3}(\eta x_{1})x_{1}^{n-5}.
	\end{align*}

	If we can show that $h_{n-2}(x_{1})>0,$ then $A-B>0$ is derived.
	
	\textbf{Step 1.3.} To show that $h_\alpha(0)>0$, $\alpha=4,5,\dots,n-2$.
	
	By $\alpha\ge 4$ and collecting the terms with the factor $\alpha y_l^{\alpha-1}$, we obtain
	\begin{align*}
		h_\alpha(0)=&\sum\limits_{\{i:u_{1}\in\xi^3_i,u_{2}\notin\xi^3_i\}} \big\{(\sum\limits_{\{l:u_1\in\xi_{l}^2\subseteq\xi^3_i\}}y_l)^{\alpha} - \sum
		_{\{l:u_1\in\xi_{l}^2\subseteq\xi^3_{i}\}}y_l^{\alpha}\big\}-\sum_{l=2}^ky_l^{\alpha}\\
		\ge&\sum\limits_{\{i:u_{1}\in\xi^3_i,u_{2}\notin\xi^3_i\}} \big\{\sum_{\{l:u_1\in\xi_{l}^2\subseteq\xi^3_i\}}\alpha y_l^{\alpha-1}\sum\limits_{\{s:u_1\in\xi_{s}^2\subseteq\xi^3_i,s\neq l\}}y_s \big\}-\sum\limits_{l=2}^ky_l^{\alpha}\\
		=&\sum\limits_{l=2}^k\alpha y_l^{\alpha-1}\sum\limits_{\big\{s:u_1\in\xi_s^2,u_{2}\notin\mathrm{span}\{\xi^2_{s},\xi^2_{l}\},s\neq l\big\}}y_s-\sum_{l=2}^ky_l^{\alpha}\\
		=&\sum_{l=2}^k\alpha y_l^{\alpha-1}\sum_{\big\{s:u_1\in\xi_s^2,\xi^2_{s}\nsubseteq\mathrm{span}\{u_{2},\xi^2_{l}\}\big\}}y_s-\sum_{l=2}^ky_l^{\alpha}.
	\end{align*}
	
	Note that
	$\bigcup_{l\in\{l:u_{1}\in\xi_l^2\}}\{j:u_j\in\xi_l^2\setminus\{ u_1\}\}$
	is indeed a \emph{disjoint} union of $\{2,3,\dots,N\}$. We obtain that
	\begin{equation}\label{sumxi}
		\sum\limits_{l=1}^ky_l=\sum\limits_{\{l:u_{1}\in\xi_l^2\}}\sum_{\{j:u_j\in\xi_l^2\setminus\{ u_1\}\}}x_j=\sum_{j=2}^{N}x_j=n-x_1.
	\end{equation}
	
	From (\ref{sumxi}), (\ref{3sum}), (\ref{prez}), and that $y_l\le2$, $\alpha\ge 4, n\ge 5$, it follows that
	\begin{align*}
		h_\alpha(0)\ge&\sum\limits_{l=2}^k\alpha y_l^{\alpha-1}\big( n-x_{1}-\sum\limits_{\big\{s:u_1\in\xi^2_{s}\subseteq\mathrm{span}\{u_{2},\xi^2_{l}\}\big\}}\!\!y_s\big)-\sum\limits_{l=2}^ky_l^{\alpha}\\
		=&\sum\limits_{l=2}^k\alpha y_l^{\alpha-1}(n-x_1-(-x_1+\sum_{\{j:u_j\in\mathrm{span}\{u_{2},\xi^2_{l}\}\}}\!\!x_j))-\sum\limits_{l=2}^ky_l^{\alpha}\\
		=&\sum\limits_{l=2}^k\alpha y_l^{\alpha-1}(n-\sum_{\{j:u_j\in\mathrm{span}\{u_{2},\xi^2_{l}\}\}\!\!}x_j)-\sum\limits_{l=2}^ky_l^{\alpha}\\
		=&\sum\limits_{l=2}^ky_l^{\alpha-1}(n\alpha -\alpha\sum_{\{j:u_j\in\mathrm{span}\{u_{2},\xi^2_{l}\}\}}\!\!x_j-y_l)\\
		\ge&\sum\limits_{l=2}^ky_l^{\alpha-1}(n\alpha-3\alpha-2)>0.
	\end{align*}
	
	\textbf{Step 1.4.} To show that $h_3(t)>0$, for $t\in(0,x_{1})$.
	
	By collecting the terms with the factor $3 y_l^{2}$ and $y_l$, respectively, (\ref{sumxi}) and (\ref{3sum}), we obtain
	\begin{align*}
		&h_3(t)\\
		=&\sum\limits_{\{i:u_{1}\in\xi^3_i,u_{2}\notin\xi^3_i\}}\!\!\!\Big((t+\sum\limits_{\{l:u_{1}\in\xi_{l}^2\subseteq\xi^3_i\}}\!\!\!y_l)^{3}-
		\sum\limits_{\{l:u_{1}\in\xi_{l}^2\subseteq\xi^3_{i}\}}\!\!\!\big( (y_l+t)^{3}- t^{3}\big)-t^{3}\Big)-\sum\limits_{l=2}^k\big( (t+y_l)^{3}-t^{3}\big)\\
		\ge&\sum\limits_{\{i:u_{1}\in\xi^3_i,u_{2}\notin\xi^3_i\}}\!\!\!
		\big(\sum\limits_{\{l:u_{1}\in\xi_{l}^2\subseteq\xi^3_i\}}\!\!\!3y_l^2\sum\limits_{\{s:u_{1}\in\xi_{s}^2\subseteq\xi^3_i,s\neq l\}}\!\!\!y_s +3t\!\!\! \sum\limits_{\{l:u_{1}\in\xi_{l}^2\subseteq\xi^3_i\}}\!\!\!y_l\sum\limits_{\{s:u_{1}\in\xi_{s}^2\subseteq\xi^3_i,s\neq l\}}\!\!\!y_s\big)-\sum\limits_{l=2}^k\big( (t+y_l)^{3}-t^{3}\big)\\ =&\sum\limits_{l=2}^k3y_l^2\sum\limits_{\big\{s:u_{1}\in\xi_s^2,\xi^2_{s}\nsubseteq\mathrm{span}\{u_{2},\xi^2_{l}\}\big\}}\!\!\!y_s+3t\sum\limits_{l=2}^ky_l\sum\limits_{\big\{s:u_{1}\in\xi_s^2,\xi^2_{s}\nsubseteq\mathrm{span}\{u_{2},\xi^2_{l}\}\big\}}\!\!\!y_s-\sum\limits_{l=2}^k(y_l^3+3y_l^2t+3y_lt^2)\\
		=&\sum\limits_{l=2}^k3y_l^2(n-\sum_{\{j:u_j\in\mathrm{span}\{u_2,\xi_l^2\}\}}\!\!\!x_j)+3t\sum\limits_{l=2}^ky_l(n-\sum_{\{j:u_j\in\mathrm{span}\{u_2,\xi_l^2\}\}}\!\!\!x_j)-\sum\limits_{l=2}^k(y_l^3+3y_l^2t+3y_lt^2)\\
		\ge&\sum\limits_{l=2}^k3y_l^2(n-3)+3t\sum\limits_{l=2}^ky_l(n-3)-\sum\limits_{l=2}^k(y_l^3+3y_l^2t+3y_lt^2)\\
		=&\sum\limits_{l=2}^ky_l^2(3n-9-y_l-3t)+3t\sum\limits_{l=2}^ky_l(n-3-t).
	\end{align*}
	
	Since $0<t<x_1\le1$, $y_l\le2$, and $n\ge 5$, we have
	\[h_3(t)>\sum\limits_{l=2}^ky_l^2(3n-9-2-3)+3t\sum\limits_{l=2}^ky_l(n-3-1)=\sum\limits_{l=2}^ky_l^2(3n-14)+3t\sum\limits_{l=2}^ky_l(n-4)>0.\]
	
	Combining the above four sub-steps, we have shown that
	\[\frac{\partial^2g}{\partial x_{j_1}^2}-\frac{\partial^2g}{\partial x_{j_1}\partial x_{j_2}}=n(n-1)(A-B)={n(n-1)}h_{n-2}(x_{1})>0.\]	
	Consequently, $g$ cannot attain its maximum in the region $P_{M(u_1,\dots,u_N)}\setminus\mathcal{F}_0(P_{M(u_1,\dots,u_N)})$.
	
	\textbf{Step 2.} To evaluate the functional $g$ at points of $\mathcal{F}_0(P_{M(u_1,\dots,u_N)})$.
	
	For each $x\in\mathcal{F}_0(P_{M(u_1,\dots,u_N)})$, by Theorem \ref{vertxedge}, w.l.o.g., assume
	\[x=(\underbrace{1,\dots,1}_n,\underbrace{0,\dots,0}_{N-n}).\]
	
	By the definition of $P_{M(u_1,\dots,u_N)}$, it follows that $u_1, u_2,\dots,u_n$ must be linearly independent. So, for each $i\in\{1,2,\dots,m_3\}$, there are at most three elements of $\{u_1,u_2,\dots,u_n\}$
	lying in $\xi_i^3$. Moreover, if the number of elements of $\{u_1,u_2,\dots,u_n\}$
	lying in $\xi_i^3$ is strictly less than 3, i.e., $|\xi_i^3\cap\{u_1,\dots,u_n\}|<3$, then
	\[(\sum\limits_{\{j:u_j\in \xi_i^3\}}x_j)^n-\sum\limits_{\{l:\xi_l^2\subseteq \xi_i^3\}}\big((\sum\limits_{\{j:u_j\in \xi_l^2\}}x_j)^n-\sum\limits_{\{j:u_j\in \xi_l^2\}} x_j^n\big) -\sum\limits_{\{j:u_j\in \xi_i^3\}}x_j^n=0\]
	by calculating directly when $|\xi_i^3\cap\{u_1,\dots,u_n\}|=0,1,2$.
	
	Therefore,
	\begin{align*}
		g(x)=&\sum\limits_{i=1}^{m_3}\Big((\sum\limits_{\{j:u_j\in \xi_i^3\}}x_j)^n-\sum\limits_{\{l:\xi_l^2\subseteq \xi_i^3\}}\big((\sum\limits_{\{j:u_j\in \xi_l^2\}}x_j)^n-\sum\limits_{\{j:u_j\in \xi_l^2\}} x_j^n\big) -\sum\limits_{\{j:u_j\in \xi_i^3\}}x_j^n\Big)\\
		=&\sum\limits_{\{i:|\xi_i^3\cap\{u_1,\dots,u_n\}|=3\}}\Big( (\sum\limits_{\{j:u_j\in \xi_i^3\}}x_j)^n-\sum\limits_{\{l:\xi_l^2\subseteq \xi_i^3\}}\big((\sum\limits_{\{j:u_j\in \xi_l^2\}}x_j)^n-\sum\limits_{\{j:u_j\in \xi_l^2\}} x_j^n\big) -\sum\limits_{\{j:u_j\in \xi_i^3\}}x_j^n\Big)\\
		=&\binom{n}{3}\big(3^n-3(2^n-2)-3\big)\\
		=&\binom{n}{3}\big(3^n-3\times 2^n+3\big).
	\end{align*}
	
	\textbf{Step 3.} Combining Step 1 and Step 2, we obtain that for $x\in P_{M(u_1,\dots,u_N)}$,
	\[g(x)\le\binom{n}{3}\big(3^n-3\times 2^n+3\big), \]
	with equality if and only if $x\in\mathcal{F}_0(P_{M(u_1,\dots,u_N)})$.
	
	If $P$ is \emph{not} a parallelotope, then  $N>n$. By Theorem \ref{rel=cal}, we obtain that
	$$a=(V_P(\{\pm u_1\}),\dots,V_P(\{\pm u_N\}))\in\mathrm{relint}P_{M(u_1,\dots,u_N)}.$$
	Hence,
	\[X_3(P)^n=g(a)<\max_{x\in P_{M(u_1,\dots,u_N)}} g(x)=\binom{n}{3}\big(3^n-3\times 2^n+3\big).\]
	
	To sum up, for a polytope $P$ with its centroid at the origin, it follows that	
	$$\frac{X_3(P)}{V_n(P)}\leq\sqrt[n]{\binom{n}{3}\big((\frac{3}{n})^n-3(\frac{2}{n})^n+\frac{3}{n^n}\big)},$$
	with equality if and only if $P$ is a parallelotope.
\end{proof}

Now, we prove Theorem \ref{thm1.1}.
\begin{theorem}\label{pX_2}
	Let $P$ be a polytope in $\mathbb{R}^n$ with its centroid at the origin and $n\geq3$. Then
	$$\frac{X_2(P)}{V_n(P)}\leq\sqrt[n]{\binom{n}{2}\big((\frac{2}{n})^n-\frac{2}{n^n}\big)},$$
	with equality if and only if $P$ is a parallelotope.
\end{theorem}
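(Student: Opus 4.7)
My plan is to mirror the proof of Theorem \ref{fX_3} given just above, replacing $X_3$ by $X_2$. Normalizing $V_n(P)=n$ and setting $a_j=V_P(\{\pm u_j\})$, Example \ref{X_2} gives $X_2(P)^n=g(a)$ for
$$g(x_1,\dots,x_N)=\sum_{i=1}^{m_2}\Big((\sum_{\{j:u_j\in\xi_i^2\}}x_j)^n-\sum_{\{j:u_j\in\xi_i^2\}}x_j^n\Big),$$
and Theorem \ref{rel=cal} places $a$ inside the compact matroid polytope $P_{M(u_1,\dots,u_N)}$, with $a\in\mathrm{relint}\,P_{M(u_1,\dots,u_N)}$ exactly when $P$ is not a parallelotope. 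I will show that $g$ attains its maximum on $P_{M(u_1,\dots,u_N)}$ only at vertices, with common vertex value $\binom{n}{2}(2^n-2)$; dividing by $n^n=V_n(P)^n$ then recovers the claimed upper bound, and the parallelotope equality case follows via Lemmas \ref{injective} and \ref{thm5.1}.

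The vertex evaluation is direct: at $v=\sum_{j=1}^n e_{i_j}$ the vectors $\{u_{i_1},\dots,u_{i_n}\}$ form a basis, so each $\xi_i^2$ meets this basis in at most two elements. Only the $\binom{n}{2}$ two-element intersections contribute, each supplying $2^n-2$, yielding $g(v)=\binom{n}{2}(2^n-2)$ independently of the chosen vertex.

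To rule out non-vertex maxima I will argue by contradiction, assuming $g$ is maximized at a non-vertex $x^\ast$. Because $\sum_i x_i^\ast=n$ is an integer, at least two coordinates of $x^\ast$ lie in $(0,1)$, and by Theorem \ref{vertxedge} the face containing $x^\ast$ in its relative interior admits an edge direction $e_{j_1}-e_{j_2}$ along which $x^\ast$ can be perturbed bidirectionally; this bidirectional admissibility already forces $x_{j_1}^\ast,x_{j_2}^\ast\in(0,1)$, since the constraints $0\le x_{j}\le 1$ must be slack at $j=j_1,j_2$. Setting $G(t)=g(x^\ast+t(e_{j_1}-e_{j_2}))$, maximality gives $G''(0)\le 0$, and by symmetry it suffices to show
$$\Big(\frac{\partial^2 g}{\partial x_{j_1}^2}-\frac{\partial^2 g}{\partial x_{j_1}\partial x_{j_2}}\Big)(x^\ast)>0$$
together with the analogous statement obtained by swapping $j_1$ and $j_2$. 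Using that $\mathrm{span}\{u_{j_1},u_{j_2}\}$ is the unique element of $\{\xi_i^2\}$ containing both $u_{j_1}$ and $u_{j_2}$, and enumerating the $2$-subspaces through $u_{j_1}$ as $\xi_1^2,\dots,\xi_k^2$ with $y_l=\sum_{\{j:u_j\in\xi_l^2,\,j\ne j_1\}} x_j^\ast$, a direct differentiation yields
$$\frac{1}{n(n-1)}\Big(\frac{\partial^2 g}{\partial x_{j_1}^2}-\frac{\partial^2 g}{\partial x_{j_1}\partial x_{j_2}}\Big)(x^\ast)=\sum_{l=2}^{k}\big((x_{j_1}^\ast+y_l)^{n-2}-(x_{j_1}^\ast)^{n-2}\big)-(x_{j_1}^\ast)^{n-2}.$$

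The main obstacle is the positivity of this quantity. The argument draws on two matroid inputs: the disjoint-union identity of (\ref{sumxi}), giving $\sum_{l=1}^{k} y_l=n-x_{j_1}^\ast$; and the flat inequality $x_{j_1}^\ast+y_l\le 2=\dim\xi_l^2$, which forces $\sum_{l\ge 2}y_l\ge n-2$. Bernoulli's inequality $(a+b)^{n-2}\ge a^{n-2}+(n-2)a^{n-3}b$ then bounds the displayed sum below by $(x_{j_1}^\ast)^{n-3}\big((n-2)^2-x_{j_1}^\ast\big)$, which is strictly positive for $n\ge 3$ as long as $x_{j_1}^\ast\in(0,1)$: for $n\ge 4$ one has $(n-2)^2\ge 4$ with margin to spare, while for $n=3$ the Bernoulli estimate is tight and strict positivity survives only because $x_{j_1}^\ast<1$ by bidirectional admissibility. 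The symmetric estimate in $x_{j_2}$ is handled identically, so $G''(0)>0$, contradicting maximality. This delicacy in the $n=3$ borderline is the real substance of the proof and is the reason the elaborate multi-stage Taylor expansion of Theorem \ref{fX_3} is not needed here.
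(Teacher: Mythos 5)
Your proposal is correct and reproduces the paper's proof essentially verbatim: the same reduction to maximizing the polynomial $g$ over the matroid polytope $P_{M(u_1,\dots,u_N)}$, the same vertex evaluation $\binom{n}{2}(2^n-2)$, and the same second-derivative contradiction at non-vertex points, using $0<x_{j_1}^\ast<1$ from bidirectional edge admissibility, the disjoint-union identity $\sum_l y_l = n-x_{j_1}^\ast$, the flat inequality $x_{j_1}^\ast+y_1\le 2$, and the Bernoulli lower bound. The paper writes the intermediate algebra slightly differently (keeping $n-\sum_{\{j:u_j\in\xi_{u_1,u_2}\}}x_j\ge n-2$ visible rather than isolating $\sum_{l\ge2}y_l\ge n-2$), but both arrive at the same strict positivity, and your observation about the $n=3$ borderline is exactly what the paper's final chain of inequalities handles.
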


\begin{proof}
	Assume that $\mathrm{supp}S_P\cup\mathrm{supp}S_{-P}=\{\pm u_1,\pm u_2,\dots,\pm u_N\}$ and $V_n(P)=n.$
	
	If $P$ is a  parallelotope, then by Example \ref{X_2}, it follows that
	\[X_2(P)^n=\binom{n}{2}\big(2^n-2\big).\]
	In the following, we aim to show that
	\[X_2(P)^n<\binom{n}{2}\big(2^n-2\big),\]
	as long as  $P$ is \emph{not} a parallelotope.
	
	Let
	$$\{\xi_1^2,\xi_2^2,\dots,\xi_{m_2}^2\}=\{\mathrm{span}\{u_{i_1},\dots,u_{i_n}\}:i_1,\dots,i_n\in\{1,2,\dots,N\},\dim(\mathrm{span}\{u_{i_1},\dots,u_{i_n}\})=2\}.$$
	That is, $\{\xi_1^2,\xi_2^2,\dots,\xi_{m_2}^2\}$ consists of \emph{all} the $2$-dimensional subspaces spanned
	by $n$ unit outer  normals of the polytope $P$.
	
	Let $a=(a_1,\dots,a_N)=(V_P(\{\pm u_1\}),\dots,V_P(\{\pm u_N\}))$. From  the assumption that $V_n(P)=n$ and Example \ref{X_2}, it follows that \[\frac{X_2(P)^n}{V_n(P)^n}=\sum\limits_{i=1}^{m_2}\big((\sum\limits_{\{j:{u_j}\in\xi^2_{i}\}}a_j)^n-\sum\limits_{\{j:{u_j}\in\xi^2_{i}\}}a_j^n\big).\]
	
	In the rest, we investigate the maximum of the functional
	\[f(x_1,\dots,x_N)=\sum\limits_{i=1}^{m_2}\big((\sum\limits_{\{j:{u_j}\in\xi^2_{i}\}}x_j)^n-\sum\limits_{\{j:{u_j}\in\xi^2_{i}\}}x_j^n\big)\]
	in the set $P_{M(u_1,\dots,u_N)}$.
	Obviously that $f(a)=X_2(P)^n/V_n(P)^n$.
	
	Since $P_{M(u_1,\dots,u_N)}$ is compact and $f$ is a polynomial, it follows that $f$ attains its maximum on $P_{M(u_1,\dots,u_N)}$.
	What follows aims to show that $f$ attains its maximum precisely at the \emph{vertices} of the polytope $P_{M(u_1,\dots,u_N)}$. We show this by contradiction.
	
	\textbf{Step 1.} Assume that $f$ attains its maximum at $x\in P_{M(u_1,\dots,u_N)}\setminus\mathcal{F}_0(P_{M(u_1,\dots,u_N)})$. By Theorem \ref{vertxedge}, there exist distinct $j_1,j_2\in\{1,\dots,N\}$ and sufficiently small
	$\varepsilon>0$ so that
	\[x+\varepsilon[e_{j_2}-e_{j_1},e_{j_1}-e_{j_2}]\subseteq P_{M(u_1,\dots,u_N)}.\]
	W.l.o.g., assume $j_1=1,\ j_2=2$.
	
	Let
	\[F(t)=f(x+t(e_1-e_2)),\quad t\in[-\varepsilon,\varepsilon].\]
	By the assumption that $f$  attains its maximum at the point $x$, it implies that $F(t)$ attains its maximum at $t=0$.
	Hence, it is necessary that
	$$F'(0)=0,\ \ \ \ \text{and}\ \ \ \ F''(0)=(\frac{\partial^2f}{\partial x_1^2}+\frac{\partial^2f}{\partial x_2^2}-2\frac{\partial^2f}{\partial x_1\partial x_2})(x)\le0.$$
	
	However, in the following we show that $\frac{\partial^2f}{\partial x_{1}^2}-\frac{\partial^2f}{\partial x_{1}\partial x_{2}}>0$
	(Similarly, that $\frac{\partial^2f}{\partial x_{2}^2}-\frac{\partial^2f}{\partial x_{1}\partial x_{2}}>0$.), which is a contradiction. To prove it, we divide it into two sub-steps.
	
	\textbf{Step 1.1.} By calculating directly, we obtain
	\begin{align*}
		&\frac{\partial f}{\partial x_{1}}=n\sum\limits_{\{i:u_{1}\in\xi_i^2\}}\big( (\sum_{\{j:u_j\in\xi_i^2\}}x_j)^{n-1}-x_{1}^{n-1}\big),\\
		&\frac{\partial^2f}{\partial x_{1}^2}=n(n-1)\sum\limits_{\{i:u_{1}\in\xi_i^2\}}\big( (\sum_{\{j:u_j\in\xi_i^2\}}x_j)^{n-2}-x_{1}^{n-2}\big)\big),\\
		&\frac{\partial^2f}{\partial x_{1}\partial x_{2}}=n(n-1)(\sum_{\{j:u_j\in\xi_{u_1,u_2}\}}x_j)^{n-2},
	\end{align*}
	where $\xi_{u_1,u_2}=\mathrm{span}\{u_1,u_2\}\in\{\xi_1^2,\xi_2^2,\dots,\xi_{m_2}^2\}$.
	
	Since $n\ge 3$, it follows that
	\begin{align}
		&~~~~\nonumber\frac{1}{n(n-1)}\big(\frac{\partial^2f}{\partial x_{1}^2}-\frac{\partial^2f}{\partial x_{1}\partial x_{2}}\big)\\
		&\nonumber=\sum\limits_{\{i:u_{1}\in\xi_i^2\}}\big( (\sum_{\{j:u_j\in\xi_i^2\}}x_j)^{n-2}-x_{1}^{n-2}\big)- \big(\sum_{\{j:u_j\in\xi_{u_1,u_2}\}}x_j\big)^{n-2}\\
		\nonumber&=-x_1^{n-2}+\sum\limits_{\{i:u_{1}\in\xi_i^2,u_2\notin\xi_i^2\}}\big( (\sum_{\{j:u_j\in\xi_i^2\}}x_j)^{n-2}-x_{1}^{n-2}\big)\\
		\nonumber&=-x_1^{n-2}+\sum\limits_{\{i:u_{1}\in\xi_i^2,u_2\notin\xi_i^2\}}\big( (x_1+\sum_{\{j:u_j\in\xi_i^2\setminus\{ u_1\}\}}x_j)^{n-2}-x_{1}^{n-2}\big)\\
		\nonumber&\ge-x_1^{n-2}+\sum\limits_{\{i:u_{1}\in\xi_i^2,u_2\notin\xi_i^2\}}\big( x_1^{n-2}+(n-2)x_1^{n-3}(\sum_{\{j:u_j\in\xi_i^2\setminus\{ u_1\}\}}x_j)-x_{1}^{n-2}\big)\\
		\label{-u1u2}&=-x_1^{n-2}+(n-2)x_1^{n-3}\sum\limits_{\{i:u_{1}\in\xi_i^2,u_2\notin\xi_i^2\}}\sum_{\{j:u_j\in\xi_i^2\setminus\{u_1\}\}}x_j.
	\end{align}
	
	\textbf{Step 1.2.}
	Since  $x\pm\varepsilon(e_1-e_2)=(x_1\pm\varepsilon,x_2\mp\varepsilon,x_3,\dots,x_N)\in P_{M(u_1,\dots,u_N)}$, it follows that
	\begin{equation}\label{5.5}
		0<x_1<1.
	\end{equation}
	
	Combining (\ref{-u1u2}), (\ref{sumxi}), (\ref{5.5}) and the definition of $P_{M(u_1,\dots,u_N)}$, we obtain that
	\begin{align*}
		&~~~~\frac{1}{n(n-1)}\big(\frac{\partial^2f}{\partial x_{1}^2}-\frac{\partial^2f}{\partial x_{1}\partial x_{2}}\big)\\
		&\ge-x_1^{n-2}+(n-2)x_1^{n-3}(n-x_1-\sum_{\{j:u_j\in\xi_{u_1,u_2}\setminus\{u_1\}\}}x_j)\\
		&=-x_1^{n-2}+(n-2)x_1^{n-3}(n-x_1-\sum_{\{j:u_j\in\xi_{u_1,u_2}\}}x_j+x_1)\\
		&=x_1^{n-3}\big(-x_1+(n-2)(n-\sum_{\{j:u_j\in\xi_{u_1,u_2}\}}x_j)\big)\\
		&> x_1^{n-3}\big(-1+(n-2)(n-2)\big)\\
		&=(n-1)(n-3)x_1^{n-3}\ge 0.
	\end{align*}
	Consequently, $f$ cannot attain its maximum in the region $P_{M(u_1,\dots,u_N)}\setminus\mathcal{F}_0(P_{M(u_1,\dots,u_N)})$.
	
	\textbf{Step 2.} To evaluate the functional $f$ at points of $\mathcal{F}_0(P_{M(u_1,\dots,u_N)})$.
	
	For each $x\in\mathcal{F}_0(P_{M(u_1,\dots,u_N)})$, by Theorem \ref{vertxedge}, w.l.o.g., assume
	\[x=(\underbrace{1,\dots,1}_n,\underbrace{0,\dots,0}_{N-n}).\]
	
	By the definition of $P_{M(u_1,\dots,u_N)}$, it follows that $u_1, u_2,\dots,u_n$ must be linearly independent. Thus, for each $i\in\{1,2,\dots,m_2\}$, there are at most two elements of $\{u_1,u_2,\dots,u_n\}$
	lying in $\xi_i^2$. Moreover, if the number of elements of $\{u_1,u_2,\dots,u_n\}$
	lying in $\xi_i^2$ is strictly less than 2, i.e., $|\xi_i^2\cap\{u_1,\dots,u_n\}|<2$, then
	\[\big(\sum\limits_{\{j:{u_j}\in\xi^2_{i}\}}x_j\big)^n-\sum\limits_{\{j:{u_j}\in\xi^2_{i}\}}x_j^n=0\]
	by calculating directly when $|\xi_i^2\cap\{u_1,\dots,u_n\}|=0,1$.
	
	Therefore,
	\begin{align*}
		f(x)=&\sum\limits_{i=1}^{m_2}\big((\sum\limits_{\{j:{u_j}\in\xi^2_{i}\}}x_j)^n-\sum\limits_{\{j:{u_j}\in\xi^2_{i}\}}x_j^n\big)\\
		=&\sum\limits_{\{i:|\xi_i^2\cap\{u_1,\dots,u_n\}|=2\}}\big((\sum\limits_{\{j:{u_j}\in\xi^2_{i}\}}x_j)^n-\sum\limits_{\{j:{u_j}\in\xi^2_{i}\}}x_j^n\big)\\
		=&\binom{n}{2}\big(2^n-2\big).
	\end{align*}

	\textbf{Step 3.} Combining Step 1 and Step 2, we obtain that for $x\in P_{M(u_1,\dots,u_N)}$,
	\[f(x)\le\binom{n}{2}\big(2^n-2\big), \]
	with equality if and only if $x\in\mathcal{F}_0(P_{M(u_1,\dots,u_N)})$.
	
	If $P$ is \emph{not} a parallelotope, then  $N>n$. By Theorem \ref{rel=cal}, we obtain that
	$$a=(V_P(\{\pm u_1\}),\dots,V_P(\{\pm u_N\}))\in\mathrm{relint}P_{M(u_1,\dots,u_N)}.$$
	Hence,
	\[X_2(P)^n=f(a)<\max_{x\in P_{M(u_1,\dots,u_N)}} f(x)=\binom{n}{2}\big(2^n-2\big).\]
	
	To sum up, for a polytope $P$ with its centroid at the origin, it follows that	
	$$\frac{X_2(P)}{V_n(P)}\leq\sqrt[n]{\binom{n}{2}\big((\frac{2}{n})^n-\frac{2}{n^n}\big)},$$
	with equality if and only if $P$ is a parallelotope.
\end{proof}

\vskip 10pt
\section{\bf Proof of Theorem 1.3}\label{sec6}
\vskip5pt

Let $P\in \mathcal{P}_4^4$ with its centroid at the origin. Assume that
$$V_4(P)=4,\quad\text{and}\quad\mathrm{supp}S_P\cup\mathrm{supp}S_{-P}=\{\pm u_1,\dots,\pm u_N\}.$$

Recall that $\mathcal{P}_4^4$ is the set of polytopes in $\mathbb{R}^4$  which are in $4$-general position and contain the origin in their interiors. In this setting,  any $4$ elements of $\{u_1,\dots,u_N\}$ are linearly independent. So, the bases $\mathcal{B}(M(u_1,\dots,u_N))$ is the collection of  all $4$-element subsets of $\{ 1,\dots,N\}$, and therefore $P_{M(u_1,\dots,u_N)}$ is the hypersimplex $\bigtriangleup_N^4=[0,1]^4\cap H_{\mathrm{I}_N,4}$.

Write $a=(a_1,\dots,a_N)=(V_P(\{\pm u_1\}),\dots,V_P(\{\pm u_N\})).$  By Example \ref{X_3}, it follows that
\[\frac{X_3(P)^4}{V_4(P)^4}=\frac{1}{256}\sum\limits_{1\leq i<j<k\leq N}\big((a_i+a_j+a_k)^4-(a_i+a_j)^4-(a_i+a_k)^4-(a_j+a_k)^4+a_i^4+a_j^4+a_k^4\big).\]

For $x\in[0,1]^4\cap H_{\mathrm{I}_N,4}$, let
\[f(x)=\frac{1}{256}\sum\limits_{1\leq i<j<k\leq N}\big((x_i+x_j+x_k)^4-(x_i+x_j)^4-(x_i+x_k)^4-(x_j+x_k)^4+x_i^4+x_j^4+x_k^4\big). \]
Obviously that $f$ is \emph{symmetric} in $x_i$, $i\in{1,2,\dots,N}$,  and $f(a)=X_3(P)^4/V_4(P)^4$.

In the following, we study the extremal problem for $X_3$ by two cases of quantity $N$, i.e., the number of outer normal vector pairs of $P$. To prove Theorem \ref{thm1.3}, we first show two lemmas.

\begin{lemma}\label{lemN5}
	If $N=5$, then
	$$ f(a)\leq \frac{72}{125},$$
	with equality if and only if $a=\frac{4}{5}(1,1,1,1,1).$
\end{lemma}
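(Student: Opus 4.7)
The first move is to collapse the alternating-sum structure of the integrand inside $X_3^4$. A short direct expansion of
$F(a,b,c):=(a{+}b{+}c)^4-(a{+}b)^4-(a{+}c)^4-(b{+}c)^4+a^4+b^4+c^4$
shows that all ``pure'' quartic and cubic-times-linear monomials cancel, yielding the clean identity $F(a,b,c)=12\,abc(a+b+c)$. Consequently
\[
f(x)=\tfrac{3}{64}\sum_{1\le i<j<k\le 5}x_ix_jx_k(x_i+x_j+x_k),
\]
which is $(3/64)$ times the single monomial symmetric function $m_{(2,1,1)}(x)$. Since every four of $u_1,\dots,u_5$ are linearly independent, every $4$-subset of $\{1,\dots,5\}$ is a basis of $M(u_1,\dots,u_5)$, so $P_{M(u_1,\dots,u_5)}$ is precisely the hypersimplex $\bigtriangleup_5^4=[0,1]^5\cap H_{\mathrm{I}_5,4}$. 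The task is therefore to maximise the explicit symmetric polynomial $m_{(2,1,1)}$ on $\bigtriangleup_5^4$, with the candidate maximiser being the barycentre $(4/5)(1,1,1,1,1)$.

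Next I would centre: set $y_i=a_i-4/5$, so that $q_1:=\sum y_i=0$ and $y_i\in[-4/5,1/5]$. With $q_r:=\sum_i y_i^r$ and $m_{(2,2)}(y):=\sum_{i<j}y_i^2y_j^2$, a routine expansion (using $q_1=0$ to kill odd-weight pieces) gives
\[
f(a)-\tfrac{72}{125}=\tfrac{3}{64}\Bigl(q_4-\tfrac{4}{5}q_3-\tfrac{1}{2}q_2^2-\tfrac{24}{25}q_2\Bigr),
\]
so the lemma reduces to proving the polynomial inequality
\[
q_4\;\le\;\tfrac{24}{25}q_2+\tfrac{1}{2}q_2^2+\tfrac{4}{5}q_3\qquad(\ast)
\]
on the slab $\{y\in\mathbb{R}^5:\sum y_i=0,\;y_i\in[-4/5,1/5]\}$, with equality to be forced only by $y=0$.

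For $(\ast)$ I would line up three elementary pointwise bounds: (a) the pointwise inequality $y_i^2(y_i+4/5)(y_i-1/5)\le 0$ summed over $i$ yields $q_4\le \tfrac{4}{25}q_2-\tfrac{3}{5}q_3$; (b) the pointwise inequality $y_i^2(y_i+4/5)\ge 0$ summed gives $q_3\ge -\tfrac{4}{5}q_2$; and (c) the algebraic identity $q_2^2=q_4+2m_{(2,2)}(y)$ with the trivial $m_{(2,2)}(y)\ge 0$. Using (c) to replace $\tfrac{1}{2}q_2^2$ by $\tfrac{1}{2}q_4+m_{(2,2)}$ rewrites the gap in $(\ast)$ as $\tfrac{24}{25}q_2-\tfrac{1}{2}q_4+m_{(2,2)}+\tfrac{4}{5}q_3$; inserting (a) into the $-\tfrac{1}{2}q_4$ term produces $\tfrac{22}{25}q_2+\tfrac{11}{10}q_3+m_{(2,2)}$; and finally (b) turns $\tfrac{11}{10}q_3$ into $-\tfrac{22}{25}q_2$, leaving the non-negative remainder $m_{(2,2)}$. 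Tracking equality: (a) forces $y_i\in\{0,-4/5,1/5\}$, (b) forces $y_i\in\{0,-4/5\}$, and $m_{(2,2)}=0$ forces at most one nonzero coordinate; combined with $\sum y_i=0$ this isolates $y=0$, i.e.\ $a=\tfrac{4}{5}(1,1,1,1,1)$.

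The main obstacle is not any of the three individual bounds (each is a one-line pointwise observation) but rather the order of operations. Naive estimates such as $q_2^2\le(4/5)q_2$, or direct substitution of both (a) and (b) into $(\ast)$, are too wasteful and actually fail when $q_2$ is small. Spotting the substitution $q_2^2=q_4+2m_{(2,2)}$ at the correct moment---trading the troublesome quartic square $q_2^2$ for the quartic power sum $q_4$, which (a) can handle, plus a harmless non-negative remainder---is what makes the coefficients of (a) and (b) telescope exactly.
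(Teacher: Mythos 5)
Your proof is correct, and it takes a genuinely different route from the paper's. The paper argues variationally: assuming the maximum of $f$ on $[0,1]^5\cap H_{(1,1,1,1,1),4}$ is attained at some $a\ne\frac45(1,\dots,1)$ with $a_1\le\dots\le a_5$, it perturbs along $e_1-e_5$ and shows the one-sided derivative $F_+'(0)=(\partial f/\partial x_1-\partial f/\partial x_5)(a)$ is strictly positive unless $a=(0,1,1,1,1)$, which is then ruled out by a direct comparison of function values. Your argument is instead a closed-form algebraic certificate: the identity $(a+b+c)^4-(a+b)^4-(a+c)^4-(b+c)^4+a^4+b^4+c^4=12abc(a+b+c)$ collapses the integrand to $\frac{3}{64}m_{(2,1,1)}$, and after centering at the barycentre the inequality reduces to $q_4\le\frac{24}{25}q_2+\frac12 q_2^2+\frac45 q_3$ on $\{\sum y_i=0,\ y_i\in[-4/5,1/5]\}$, which you settle by combining the pointwise bounds $y_i^2(y_i+\frac45)(y_i-\frac15)\le0$, $y_i^2(y_i+\frac45)\ge0$, and the identity $q_2^2=q_4+2m_{(2,2)}(y)$. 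I checked the power-sum computations (the constant $\frac{3}{64}\cdot\frac{1536}{125}=\frac{72}{125}$ and the coefficients $-\frac{24}{25},-\frac45,-\frac12$) and the telescoping of the three bounds, and they are all correct; the equality analysis correctly forces $y=0$ via $\sum y_i=0$. Your approach yields a self-contained SOS-style proof of the inequality that does not invoke any compactness or optimality argument, and the identity $F=12abc(a+b+c)$ is a clean observation the paper does not exploit; the paper's approach, on the other hand, is more portable to the $N\ge6$ case (Lemma \ref{lemN6}), where the same perturbation scheme is reused.
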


\begin{proof}
	
	If $a=\frac{4}{5}(1,1,1,1,1),$
	then
	$f(a)=\binom{5}{3}\big((\frac{3}{5})^4-3(\frac{2}{5})^4+3(\frac{4}{5})^4\big)=\frac{72}{125}.$
	
	Since $f$ is a polynomial, it follows that $f$ attains its maximum on $[0,1]^5\cap H_{(1,1,1,1,1),4}$. It suffices to show that if $a\neq\frac{4}{5}(1,1,1,1,1)$,
	then $f$ \emph{cannot} attain its maximum at $a$.
	
	Otherwise, assume $f$ attains its maximum at
	$$a\in [0,1]^5\cap H_{(1,1,1,1,1),4}\setminus\{\frac{4}{5}(1,1,1,1,1)\},$$
	w.l.o.g., say $a=(a_1,a_2,a_3,a_4,a_5)$ satisfying that
	$$0\leq a_1\leq a_2 \leq a_3\leq a_4\leq a_5\leq 1.$$
	
	Since $a\neq\frac{4}{5}(1,1,1,1,1)$, it yields that $0\leq a_1<a_5\leq 1$. Let $\varepsilon=\min\{a_5,1-a_1\}$. Then $\varepsilon>0$ and
	$$a+te_1-te_5\in [0,1]^5\cap H_{(1,1,1,1,1),4},\quad t\in[0,\varepsilon] .$$
	
	Let $F(t)=f(a+te_1-te_5),\ t\in[0,\varepsilon].$
	Since $f$ attains its maximum at $a$, it follows that $F(t)$ attains
	its maximum at $t = 0$. Hence, it is necessary that $F_+^{\prime}(0)\leq 0.$ However, we will show that $F_+^{\prime}(0)>0$, which is a contradiction.
	
	By directly calculating, we obtain
	\begin{align*}
		\frac{\partial f}{\partial x_1}&=\frac{1}{64}\sum\limits_{2\leq j<k\leq 5}\big((x_1+x_j+x_k)^3-(x_1+x_j)^3-(x_1+x_k)^3+x_1^3\big)\\
		&=\frac{3}{64}\sum\limits_{2\leq j<k\leq 5}(2x_1x_jx_k+x_j^2x_k+x_jx_k^2)\\
		&=\frac{3}{64}\sum\limits_{2\leq j\le 4}(2x_1x_jx_5+x_j^2x_5+x_jx_5^2)+\frac{3}{64}\sum\limits_{2\leq i<j\leq 4}(2x_1x_ix_j+x_i^2x_j+x_ix_j^2);\\
		\frac{\partial f}{\partial x_5}&=\frac{3}{64}\sum\limits_{2\leq j\le 4}(2x_1x_jx_5+x_j^2x_1+x_jx_1^2)+\frac{3}{64}\sum\limits_{2\leq i<j\leq 4}(2x_5x_ix_j+x_i^2x_j+x_ix_j^2).
	\end{align*}
	
	Since $0\le a_1\le a_2\le a_3 \le a_4 \le a_5$, and $\sum_{j=1}^5a_j=4$, it follows that
	\begin{align*}
		F_+^{\prime}(0)=&(\frac{\partial f}{\partial x_1}-\frac{\partial f}{\partial x_5})(a)\\
		=&\frac{3}{64}(a_5-a_1)\sum\limits_{2\leq j\leq 4}a_j(a_1+a_5+a_j)+\frac{3}{32}(a_1-a_5)\sum\limits_{2\leq i<j\leq 4} a_ia_j\\
		=&\frac{3}{64}(a_5-a_1)\sum\limits_{2\leq j\leq 4}a_j(a_1+a_5+a_j)-\frac{3}{32}(a_5-a_1)\frac{1}{2}\sum\limits_{2\leq i,j\leq 4,i\neq j} a_ia_j\\
		=&\frac{3}{64}(a_5-a_1)\sum_{j=2}^4a_j(a_1+a_5+a_j)-\frac{3}{64}(a_5-a_1)\sum_{j=2}^4 a_j(a_2+a_3+a_4-a_j)\\
		=&\frac{3}{64}(a_5-a_1)\sum_{j=2}^4 a_j(a_1+a_5+2a_j-a_2-a_3-a_4)\\
		\ge&\frac{3}{64}(a_5-a_1)\sum_{j=2}^4 a_2(a_1+a_5+2a_j-a_2-a_3-a_4)\\
		=& \frac{3}{64}(a_5-a_1)a_2(3a_1+3a_5-a_2-a_3-a_4)\ge 0,
	\end{align*}
	with equality if and only if $a_1=0$ and $a_2=a_3=a_4=a_5=1$.
	
	However,
	\[f(0,1,1,1,1)=\binom{4}{3}\big((\frac{3}{4}\big)^4-3(\frac{2}{4})^4+3(\frac{1}{4})^4\big)=\frac{9}{16}=\frac{1125}{2000}<\frac{1152}{2000}=\frac{72}{125}=f(\frac{4}{5},\frac{4}{5},\frac{4}{5},\frac{4}{5},\frac{4}{5}),\]
	which implies that $f$ \emph{cannot} attain its maximum at $(0,1,1,1,1)$. So, $a\neq(0,1,1,1,1)$, and therefore $F_+^{\prime}(0)>0$, which contradicts to that $F_+^{\prime}(0)\le0$.
\end{proof}

\begin{lemma}\label{lemN6}
	If $N\ge 6$ and $x\in (0,1]^N\cap H_{\mathrm{I}_N,4},$  then $x$ is not the maximal point of $f$ in $[0,1]^N\cap H_{\mathrm{I}_N,4}$.
\end{lemma}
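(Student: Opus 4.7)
The plan is to suppose, for contradiction, that $x\in(0,1]^N\cap H_{\mathrm{I}_N,4}$ maximizes $f$, and then to exhibit a pair-direction $e_i-e_j$ along which $f$ strictly increases. The basic tool is a one-variable parabola decomposition. A quick expansion shows that $(a+b+c)^4-(a+b)^4-(a+c)^4-(b+c)^4+a^4+b^4+c^4=12abc(a+b+c)$, so $f(x)=\tfrac{3}{64}\sum_{p<q<r}x_px_qx_r(x_p+x_q+x_r)$. Fixing $s:=x_i+x_j$ and all other coordinates, direct differentiation gives
\[
	f\bigl(x+t(e_i-e_j)\bigr)=\frac{3A(i,j)}{64}(x_i+t)(x_i+t-s)+C_{ij},
\]
with $A(i,j):=2\bigl[(x_i+x_j)^2-6(x_i+x_j)+8-\sum_k x_k^2+x_i^2+x_j^2\bigr]$ independent of $t$. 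Thus $f$ along each pair-direction is a parabola with vertex at $x_i=s/2$, opening upward if $A(i,j)>0$ and downward if $A(i,j)<0$.

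Write $I_0=\{i:x_i\in(0,1)\}$, $I_1=\{i:x_i=1\}$, $m=|I_1|$, $N'=N-m$. Because $x_i>0$ for every $i$, every pair-move inside $[0,1]^N$ that pushes one coordinate toward $0$ or toward $1$ is feasible, so maximality of $x$ forces: (a) $A(i,j)\le 0$ for every pair $\{i,j\}\subseteq I_0$, with strict inequality implying $x_i=x_j$; and (b) $A(i,j)\ge 0$ for every pair with $i\in I_1$, $j\in I_0$, which simplifies to $\sum_k x_k^2\le 2(1-x_j)^2+2$. Summing (a) over unordered pairs in $I_0$ with straightforward elementary-symmetric-polynomial bookkeeping produces
\[
	\sum_{i\in I_0}x_i^2\ \ge\ \frac{2\bigl[(4-m)^2-6(N'-1)(4-m)+\binom{N'}{2}(8-m)\bigr]}{(N'-2)(N'-3)}.
\]
For every admissible $(N,m)$ with $N\ge 7$, or with $N=6$ and $m\in\{1,2,3\}$, plain arithmetic shows that this lower bound strictly exceeds the trivial upper bound $\sum_{i\in I_0}x_i^2<\max_{I_0}x_i\cdot(4-m)\le 4-m$ (and, when $m\ge 1$, is also incompatible with the upper bound on $\sum_k x_k^2$ coming from (b)). These cases thus close immediately.

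The main obstacle is the residual case $N=6$, $m=0$, where the summed inequality degenerates to $\sum_k x_k^2\ge 8/3$ and is saturated at $x=(2/3,\dots,2/3)$. Here I would invoke the strict form of (a): any pair in $I_0$ with $x_i\ne x_j$ must satisfy $A(i,j)=0$, equivalently $2(x_i^2+x_j^2+x_ix_j)-6(x_i+x_j)+8=\sum_k x_k^2$. Subtracting this equation for two pairs $(\alpha,\beta)$ and $(\alpha,\gamma)$ with three distinct values forces $\alpha+\beta+\gamma=3$, which is impossible for values in $(0,1)$; a parallel argument combining $A(\alpha,\beta)=0$ with $A(\alpha,\alpha)\le 0$ and $A(\beta,\beta)\le 0$ rules out two-value configurations. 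Hence $x=(2/3,\dots,2/3)$ is the sole candidate, and direct evaluation yields $f((2/3)^6)=5/9$, while Lemma~\ref{lemN5} applied on the face $\{x_1=0\}$ gives the point $(0,4/5,4/5,4/5,4/5,4/5)\in[0,1]^6\cap H_{\mathrm{I}_6,4}$ with $f=72/125>5/9$. This final comparison contradicts the maximality of $x$ and completes the proof.
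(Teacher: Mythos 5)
Your proposal is correct in substance, but it takes a genuinely different route from the paper's. The paper picks a \emph{single} well-chosen direction $e_1-e_2$ (the two smallest coordinates, after sorting), computes the second derivative $F''(0)$ directly from the degree-$4$ form of $f$, and shows $F''(0)\ge 0$ with equality only when $N=6$ and $x=(2/3,\ldots,2/3)$, after which a direct comparison $f((2/3)^6)=5/9<72/125$ finishes. You instead first reduce $f$ via the identity $(a+b+c)^4-(a+b)^4-(a+c)^4-(b+c)^4+a^4+b^4+c^4=12abc(a+b+c)$, which is a nice observation that the paper never uses; it makes each restriction $t\mapsto f(x+t(e_i-e_j))$ an explicit parabola with leading coefficient (up to $\tfrac{3}{64}$) equal to your $A(i,j)$ and vertex at $t=(x_j-x_i)/2$ by symmetry. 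You then harvest the constraints $A\le 0$ (pairs in $I_0$) and $A\ge 0$ (pairs meeting $I_1$) over \emph{all} pair-directions, sum them to get a lower bound on $\sum_{I_0}x_i^2$, and close cases combinatorially, reserving $N=6,m=0$ for a structural argument that again collapses to $x=(2/3,\ldots,2/3)$ and the same numeric comparison. Both approaches do the same kind of second-order interrogation of $f$ along edge directions of the hypersimplex; the paper's is shorter and more economical, while yours gives cleaner algebraic structure and more symmetric bookkeeping at the cost of case analysis. A few steps are asserted rather than proved and deserve care if you write this up: (b) does hold, but verifying that $A(i,j)\ge 0$ is actually forced for $i\in I_1$, $j\in I_0$ requires a small sub-case analysis on where the vertex $t_0=(x_j-1)/2$ sits relative to the feasible interval $[\max(x_j-1,-x_j),0]$; the summed (a)-bound has a vanishing denominator when $N'=3$ (the case $N=6,m=3$, which instead closes because the summed inequality reads $0\le-4$); and for $N=6,m=1$ the (a)-bound alone gives only $\sum_{I_0}x_i^2\ge 7/3<3$, so (b) is essential there, contrary to the way your ``plain arithmetic'' sentence reads.
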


\begin{proof}
	Assume $f$ attains its maximum at $x\in (0,1]^N\cap H_{\mathrm{I}_N,4}.$
	W.o.l.g., say $$0<x_1\leq x_2\leq\cdots\leq x_N\leq 1.$$
	
	\textbf{Step 1.} We prove that there exists a sufficiently small  $\varepsilon>0$, so that
	$$x+t(e_1-e_2)\in[0,1]^N\cap H_{\mathrm{I}_N,4},\quad t\in[-\varepsilon,\varepsilon].$$
	
	If $x_2=1$, then $x_3=\cdots=x_N=1$. However, by $N\ge 6$, it yields that
	$$ 4=\sum_{i=1}^Nx_i>\sum_{i=3}^Nx_i=N-2\ge 4,$$
	which is a contradiction. So, $x_2<1$. Let
	$$\varepsilon=\min\{x_1,x_2,1-x_1,1-x_2\}.$$
	Then $\varepsilon>0$ and satisfies the above inclusion.

	\textbf{Step 2.} Let $F(t)=f(x+te_1-te_2),\ t\in[-\varepsilon,\varepsilon].$
	By the assumption that $f$ attains its maximum at $x$, it yields that $F(t)$ attains
	its maximum at $t = 0$. Hence, it is necessary that
	$$
	F^{\prime}(0)=0, \quad \text { and } \quad F^{\prime \prime}(0)=\big(\frac{\partial^{2} f}{\partial x_{1}^{2}}+\frac{\partial^{2} f}{\partial x_{2}^{2}}-2 \frac{\partial^{2} f}{\partial x_{1} \partial x_{2}}\big)(x) \leq 0.
	$$
	In the following, it suffices to show that $F^{\prime \prime}(0) > 0$ to get a contradiction.
	
	\textbf{Step 2.1.} we calculate $\frac{\partial^{2} f}{\partial x_{1}^{2}}, \frac{\partial^{2} f}{\partial x_{2}^{2}}$ and $\frac{\partial^{2} f}{\partial x_{1} \partial x_{2}}$.
	\begin{align*}
		\frac{\partial f}{\partial x_1}(x)=&\frac{1}{64}\sum_{2\leq j<k\leq N}\big((x_1+x_j+x_k)^{3}-(x_1+x_j)^3-(x_1+x_k)^3+x_1^3\big)\\
		=&\frac{1}{64}\sum_{k=3}^N\big((x_1+x_2+x_k)^3-(x_1+x_2)^3-(x_1+x_k)^3+x_1^3\big)+\frac{3}{64}\sum\limits_{3\leq j<k\leq N}x_jx_k(x_j+x_k+2x_1);\\
		\frac{\partial f}{\partial x_2}(x)=&\frac{1}{64}\sum_{k=3}^N\big( (x_1+x_2+x_k)^3-(x_1+x_2)^3-(x_2+x_k)^3+x_2^3\big)+\frac{3}{64}\sum_{3\leq j<k\leq N}x_jx_k(x_j+x_k+2x_2).
	\end{align*}
	
	Thus,
	\begin{align*}
		\frac{\partial^2 f}{\partial x_1^2}(x)=&\frac{3}{64}\sum_{k=3}^N\big( (x_1+x_2+x_k)^2-(x_1+x_2)^2-x_k(2x_1+x_k)\big)+\frac{3}{32}\sum\limits_{3\leq j<k\leq N}x_jx_k;~\\
		\frac{\partial^2 f}{\partial x_2^2}(x)=&\frac{3}{64}\sum_{k=3}^N\big( (x_1+x_2+x_k)^2-(x_1+x_2)^2-x_k(2x_2+x_k)\big)+\frac{3}{32}\sum\limits_{3\leq j<k\leq N}x_jx_k;\\
		\frac{\partial^2 f}{\partial x_1x_2}(x)=&\frac{3}{64}\sum_{k=3}^N\big( (x_1+x_2+x_k)^2-(x_1+x_2)^2\big).
	\end{align*}
	
	\textbf{Step 2.2.} Since $\sum_{j=1}^Nx_j=4$ and that $x_1\le x_2\le \cdots \le x_{N-1}\le x_N$, it follows that
	\begin{align}
		F^{\prime \prime}(0)&=\big(\frac{\partial^{2} f}{\partial x_{1}^{2}}+\frac{\partial^{2} f}{\partial x_{2}^{2}}-2 \frac{\partial^{2} f}{\partial x_{1} \partial x_{2}}\big)(x)\nonumber\\
		&= \frac{3}{16}\sum_{3\le j< k \le N}x_jx_k-\frac{3}{64}\sum_{k=3}^Nx_k(2x_1+2x_2+2x_k)\nonumber\\
		&=\frac{3}{16}\times\frac{1}{2}\sum_{3\le j,k \le N,j\neq k}x_jx_k-\frac{3}{32}\sum_{k=3}^Nx_k(x_1+x_2+x_k)\nonumber\\
		&=\frac{3}{32}\sum_{k=3}^Nx_k\sum_{3\le j\le N,j\neq k}x_j-\frac{3}{32}\sum_{k=3}^Nx_k(x_1+x_2+x_k)\nonumber\\
		&=\frac{3}{32}\sum_{k=3}^Nx_k(4-x_1-x_2-x_k)-\frac{3}{32}\sum_{k=3}^Nx_k(x_1+x_2+x_k)\nonumber\\
		&=\frac{3}{32}\sum_{k=3}^Nx_k(4-2x_1-2x_2-2x_k)\nonumber\\
		&\ge \frac{3}{32}\big(x_N(4-2x_1-2x_2-2x_N)+\sum_{k=3}^{N-1}x_k(4-2x_1-2x_2-2x_{N-1})\big)\label{6.1},
	\end{align}
	with equality if and only if $x_3=x_4=\cdots=x_{N-1}$.
	
	Since $N\ge 6$, it follows that
	$$2x_1+2x_2+x_{N-1}+x_N\le x_1+x_2+x_3+x_4+x_{N-1}+x_N\le 4.$$
	Hence,
	\begin{equation}\label{N=6}
		4-2x_1-2x_2\ge x_N+x_{N-1},
	\end{equation}
	with equality if and only if $N=6,$ and $x_1=x_2=x_3=x_4$.
	
	Combining (\ref{6.1}), (\ref{N=6}), that $\sum_{k=1}^Nx_k=4$, and that $x_1,x_2<1$ and $x_N\le1$, we obtain
	\begin{align}
		F^{\prime \prime}(0)&\ge \frac{3}{32}\big(x_N(x_N+x_{N-1}-2x_N)+\sum_{k=3}^{N-1}x_k(x_N+x_{N-1}-2x_{N-1})\big)\nonumber\\
		&=\frac{3}{32}\big(-x_N(x_N-x_{N-1})+(x_N-x_{N-1})\sum_{k=3}^{N-1}x_k\big)\nonumber\\
		&=\frac{3}{32}(x_N-x_{N-1})(4-x_1-x_2-x_N-x_N)\ge 0\label{6.3},
	\end{align}
	where the last equality holds if and only if $x_N=x_{N-1}$.
	
	Combining (\ref{6.1}) (\ref{N=6}) and (\ref{6.3}), it concludes that $$F^{\prime \prime}(0)\ge 0,$$
	with equality if and only if $N=6$, and $x_1=x_2=x_3=x_4=x_5=x_6=\frac{4}{6}$.
	
	However,
	\[f(\frac{4}{6},\frac{4}{6},\frac{4}{6},\frac{4}{6},\frac{4}{6},\frac{4}{6})=\binom{6}{3}\big((\frac{3}{6})^4-3(\frac{2}{6})^4+3(\frac{1}{6})^4\big)=\frac{5}{9}=\frac{625}{1125}<\frac{648}{1125}=\frac{72}{125}=f(\frac{4}{5},\frac{4}{5},\frac{4}{5},\frac{4}{5},\frac{4}{5},0),\]
	which implies $f$ \emph{cannot} attain its maximum at $\frac{4}{6}(1,1,1,1,1,1)$. So, $x\neq\frac{4}{6}(1,1,1,1,1,1)$,
	and therefore $F''(0)>0$, which contradicts to that $F''(0)\le0$.
	
	Consequently, if  $x\in (0,1]^N\cap H_{\mathrm{I}_N,4}$, then $x$ is not the maximal point of $f$ in $[0,1]^N\cap H_{\mathrm{I}_N,4}$.
\end{proof}

Combining Lemma \ref{lemN5} and Lemma \ref{lemN6}, we immediately obtain the following result.
\begin{theorem}
	\label{x3p44}
	Let $N\ge 5$ and $x\in [0,1]^N\cap H_{\mathrm{I}_N,4}.$ Then
	$$f(x)\leq \frac{72}{125},$$
	with equality if and only if there exists $\{i_1,i_2,i_3,i_4,i_5\}\subseteq\{1,2,\dots,N\}$, so that $x_{i_j}=\frac{4}{5}$, $j=1,2,3,4,5.$
\end{theorem}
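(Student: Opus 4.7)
The plan is to derive Theorem \ref{x3p44} directly from Lemmas \ref{lemN5} and \ref{lemN6} by induction on $N$, using the symmetry of $f$ in its arguments together with a clean dimension-reduction. The key structural observation is that each summand
$$(x_i+x_j+x_k)^4-(x_i+x_j)^4-(x_i+x_k)^4-(x_j+x_k)^4+x_i^4+x_j^4+x_k^4$$
vanishes identically whenever one of $x_i,x_j,x_k$ is zero (immediate from substituting and cancelling). Therefore, if $x\in[0,1]^N\cap H_{\mathrm{I}_N,4}$ has $x_N=0$, then $f(x_1,\dots,x_{N-1},0)$ coincides with the same functional $f$ viewed on $N-1$ variables and evaluated at $(x_1,\dots,x_{N-1})\in[0,1]^{N-1}\cap H_{\mathrm{I}_{N-1},4}$. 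This is the engine driving the induction.

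Since $f$ is polynomial and $[0,1]^N\cap H_{\mathrm{I}_N,4}$ is compact, a maximizer exists. The base case $N=5$ is exactly Lemma \ref{lemN5}, which yields the bound $72/125$ with equality precisely at $(4/5,4/5,4/5,4/5,4/5)$, matching the asserted characterization on a five-element index set. For the inductive step with $N\ge 6$, I would invoke Lemma \ref{lemN6}, which forbids any point of $(0,1]^N\cap H_{\mathrm{I}_N,4}$ from being a maximizer; hence every maximizer must possess at least one zero coordinate. Using the symmetry of $f$, I permute so that this zero coordinate is $x_N$, apply the vanishing reduction to pass to dimension $N-1\ge 5$, and conclude $f(x)\le 72/125$ from the induction hypothesis.

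For the equality characterization, the same reduction is reversible: if $f(x)=72/125$ with $x_N=0$, then the reduced vector in dimension $N-1$ also attains $72/125$, and peeling off zero coordinates one at a time eventually produces a five-coordinate vector equal to $(4/5,4/5,4/5,4/5,4/5)$ by Lemma \ref{lemN5}. The constraint $\sum_i x_i=4$ forces the remaining $N-5$ entries of the original $x$ to be zero, so precisely five coordinates equal $4/5$. The converse is the direct evaluation already carried out in the proof of Lemma \ref{lemN5}. The genuinely delicate work, namely the second-derivative inequality that excludes interior critical points in Lemma \ref{lemN6} and the boundary edge-direction argument in Lemma \ref{lemN5}, has already been done; the only bookkeeping item to verify is that each application of Lemma \ref{lemN6} keeps the reduced dimension at least five, which is automatic because Lemma \ref{lemN6} is invoked only when $N\ge 6$.
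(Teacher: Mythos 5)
Your proof is correct and follows essentially the same strategy as the paper's. Both hinge on the observation that each summand of $f$ vanishes identically when one of $x_i,x_j,x_k$ is zero, so deleting or appending zero coordinates preserves the value of $f$, and both use Lemma \ref{lemN5} to resolve the five-variable case while invoking Lemma \ref{lemN6} to rule out maximizers with all positive coordinates once the variable count exceeds five. The only difference is organizational: the paper restricts a maximizer $\bar{x}$ to its set of $M$ nonzero coordinates in a single step, checks $M\ge5$ by comparing $f(1,1,1,1,0,\dots)$ with $f(4/5,\dots,4/5,0,\dots)$, and rules out $M\ge6$ with Lemma \ref{lemN6}; you peel off one zero coordinate at a time by induction on $N$. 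The two versions are equivalent, and your bookkeeping that the reduction stays in dimension at least five is fine.
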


\begin{proof}
	Since $f$ is a polynomial, it follows that $f$ attains its maximum on $[0,1]^N\cap H_{\mathrm{I}_N,4}$. Assume $f$ attains its maximum at $\bar{x}$.
	Since $f$ is symmetric in $x_i$, $i\in{1,2,\dots,N}$, w.l.o.g., say
	\[\bar{x}=(\bar{x}_1,\bar{x}_2,\dots,\bar{x}_M,0,\dots,0),\quad \bar{x}_1,\bar{x}_2,\dots,\bar{x}_M>0,\ 4\le M\le N.\]
	
	Since
	\[f(1,1,1,1,0,\dots,0)<f(\frac{4}{5},\frac{4}{5},\frac{4}{5},\frac{4}{5},\frac{4}{5},0,\dots,0),\]
	it follows that $M\ge 5$.
	
	For $x\in[0,1]^M\cap H_{\mathrm{I}_M,4}$, let
	\[f_M(x)=\frac{1}{256}\sum\limits_{1\leq i<j<k\leq M}\big( (x_i+x_j+x_k)^4-(x_i+x_j)^4-(x_i+x_k)^4-(x_j+x_k)^4+x_i^4+x_j^4+x_k^4\big). \]
	Then $f_M(\bar{x}_1,\bar{x}_2,\dots,\bar{x}_M)=f(\bar{x})$. Note that
	\[f_M(\bar{x}_1,\bar{x}_2,\dots,\bar{x}_M)\le\max_{x\in[0,1]^M\cap H_{\mathrm{I}_M,4}}f_M(x)\le\max_{x\in[0,1]^N\cap H_{\mathrm{I}_N,4}}f(x)=f(\bar{x}).\]
	Hence, $f_M$ attains its maximum at $(\bar{x}_1,\bar{x}_2,\dots,\bar{x}_M)$. Combining Lemma \ref{lemN6} and Lemma \ref{lemN5}, it follows that $M=5$
	and $\bar{x}_1=\bar{x}_2=\bar{x}_3=\bar{x}_4=\bar{x}_5=\frac{4}{5}.$
\end{proof}

Now, we finish the proof of Theorem \ref{thm1.3}.

\begin{theorem}
	Let $P\in \mathcal{P}_4^4$ with its centriod at the origin. Then
	\[ \frac{X_3(P)}{V_4(P)}\leq\sqrt[4]{\frac{72}{125}},\]
	with equality if and only if $\mathrm{supp}S_P\cup\mathrm{supp}S_{-P}=\{\pm u_1,\dots,\pm u_5\}$,
	and $V_P(\{\pm u_i\})=\frac{V_4(P)}{5},$ $i=1,\dots,5.$
\end{theorem}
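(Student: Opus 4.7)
The plan is to normalize so that $V_4(P)=4$ and write $X_3(P)^4$ as an explicit symmetric polynomial in the cone-volumes $a_i = V_P(\{\pm u_i\})$ using the recursion of Example \ref{X_3}. Since $P \in \mathcal{P}_4^4$, any four of the unit normals are linearly independent, so no triple or quadruple of normals contributes a proper lower-dimensional subspace constraint. This collapses the formula of Example \ref{X_3} dramatically: for every triple $\{i,j,k\}$, the subspace $\xi_i^3 = \mathrm{span}\{u_i,u_j,u_k\}$ contains exactly three of the $u_l$'s, and similarly each $\xi_l^2$ contains exactly two, which yields
\[
\frac{X_3(P)^4}{V_4(P)^4} = \frac{1}{256}\sum_{1\le i<j<k\le N}\bigl((a_i{+}a_j{+}a_k)^4 - (a_i{+}a_j)^4 - (a_i{+}a_k)^4 - (a_j{+}a_k)^4 + a_i^4 + a_j^4 + a_k^4\bigr).
\]
Moreover, because $u_1,\dots,u_N$ are in $4$-general position, the matroid $M(u_1,\dots,u_N)$ has bases equal to all $4$-element subsets, so by Example \ref{exm3} and Theorem \ref{rel=cal}, the closure of the effective domain is exactly the hypersimplex $\triangle_N^4 = [0,1]^N \cap H_{\mathrm{I}_N,4}$. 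Thus Theorem \ref{thm1.3} reduces to maximizing the symmetric polynomial $f(x)$ defined by the right-hand side above over $\triangle_N^4$.

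Next, I would carry out the maximization in two stages according to $N$. For $N=5$, I would show that any candidate maximizer $a$ distinct from the centroid $\tfrac{4}{5}(1,1,1,1,1)$ of $\triangle_5^4$ admits a strict improvement along the edge direction $e_1 - e_5$ (with coordinates ordered $a_1 \le \cdots \le a_5$). Computing $\partial_1 f - \partial_5 f$ and using $\sum a_j = 4$ together with the ordering gives a one-sided derivative that is strictly positive unless $a = (0,1,1,1,1)$; a direct numerical comparison $f(0,1,1,1,1) = 9/16 < 72/125 = f(\tfrac{4}{5},\dots,\tfrac{4}{5})$ rules out that boundary vertex. This settles $N = 5$ with the expected equality point.

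For $N \ge 6$, the strategy is to show that no interior-type point, i.e., no $x \in (0,1]^N \cap H_{\mathrm{I}_N,4}$, can be a maximizer, which forces the optimum to lie on a face where some coordinate vanishes, and hence inductively reduces to the $N=5$ case (through a symmetric restriction $f_M$ on the lower-dimensional hypersimplex). To produce the contradiction I would order $x_1 \le \cdots \le x_N$, pick the edge direction $e_1 - e_2$ (after verifying $x_2 < 1$, which uses $N \ge 6$), and compute the second directional derivative $F''(0) = (\partial_{11} + \partial_{22} - 2\partial_{12})f$. A direct calculation simplifies this to $\tfrac{3}{32}\sum_{k=3}^N x_k(4 - 2x_1 - 2x_2 - 2x_k)$; combining the bound $4 - 2x_1 - 2x_2 \ge x_{N-1} + x_N$ (valid because $N \ge 6$ implies $x_1 + x_2 + x_3 + x_4 \le 4 - x_{N-1} - x_N$) with the ordering assumption produces $F''(0) \ge 0$, with equality only at the uniform point $\tfrac{4}{6}(1,\dots,1)$, which is directly excluded by comparing $f(\tfrac{4}{6},\dots,\tfrac{4}{6}) = 5/9$ against $72/125$.

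The main obstacle is the chain of inequalities in the $N \ge 6$ step: the positivity of $F''(0)$ is not obvious because the triple-sum structure of $f$ produces both positive $\sum x_j x_k$ contributions and negative squared-coordinate contributions that must balance tightly. The delicate point is the threshold $N = 6$ with all coordinates equal, where the estimate degenerates; handling this by an explicit numerical comparison with $f(\tfrac{4}{5},\dots,\tfrac{4}{5},0)$ is the crucial final piece. Once both lemmas are in hand, the theorem follows by observing that a maximizer of $f$ on $\triangle_N^4$ with exactly $M$ positive coordinates restricts to a maximizer on $\triangle_M^4$, so iterated reduction yields $M=5$ and $x = \tfrac{4}{5}(1,1,1,1,1)$ up to symmetric relabeling. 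Translating this back, equality in Theorem \ref{thm1.3} forces $\mathrm{supp}S_P \cup \mathrm{supp}S_{-P} = \{\pm u_1,\dots,\pm u_5\}$ with $V_P(\{\pm u_i\}) = V_4(P)/5$ for each $i$.
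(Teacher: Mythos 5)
Your proposal is correct and follows essentially the same approach as the paper: identify the domain as the hypersimplex $\triangle_N^4$, split into the cases $N=5$ (first-order improvement along $e_1-e_5$ toward the barycenter, excluding $(0,1,1,1,1)$ numerically) and $N\ge 6$ (strict positivity of the second directional derivative along $e_1-e_2$ after the bound $4-2x_1-2x_2\ge x_{N-1}+x_N$, excluding the uniform point numerically), then iterate the face-restriction reduction to force $M=5$. The only minor point left implicit is ruling out $M=4$, which the paper handles via $f(1,1,1,1,0,\dots,0)=9/16<72/125$.
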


\begin{proof}
	Assume that $\mathrm{supp}S_P\cup\mathrm{supp}S_{-P}=\{\pm u_1,\pm u_2,\dots,\pm u_N\}$ and $V_4(P)=4.$
	
	If $P$ is a parallelotope, then by Example \ref{X_3}, it follows that
	$$\frac{X_3(P)^4}{V_4(P)^4}=\binom{4}{3}\big((\frac{3}{4})^4-3(\frac{2}{4})^4+3(\frac{1}{4})^4\big)=\frac{9}{16}<\frac{72}{125}.$$
	
	If $P$ is \emph{not} a parallelotope, then $N\ge 5$. Let $a=(V_P(\{\pm u_1\}),\dots,V_P(\{\pm u_N\}))$. From Theorem \ref{x3p44}, we obtain
	\[\frac{X_3(P)^4}{V_4(P)^4}=f(a)\leq\max_{x\in[0,1]^N\cap H_{\mathrm{I}_N,4}} f(x)=\frac{72}{125},\]
	with equality if and only if $N=5$ and $V_P(\{\pm u_i\})=\frac{4}{5},\ i=1,2,3,4,5.$
\end{proof}

In particular, if $P$ is a \emph{simplex} in $\mathbb{R}^4$ with its centroid at the origin, then the equality holds in the above theorem. Moreover, by Lemma \ref{thm5.1} and Lemma \ref{injective}, there exists an origin-symmetric \emph{decahedron} $Q\in\mathcal{P}_4^4$, so that the equality also holds. However, according to Example \ref{exm3.5}, it is striking that $\frac{X_3}{V_4}$ does \emph{not} attain its extremum at the simplex or decahedron $Q$ in $\mathcal{P}_c^4$.

	\vskip 25pt
	\bibliographystyle{amsplain}
	
\end{document}